\newenvironment{proof}{\medskip                    
\noindent{\scshape Proof:}}{\quad $\square$
\medskip}  
\newtheorem{theorem}{Theorem}[section]
\newtheorem{lemma}[theorem]{Lemma}
\newtheorem{proposition}[theorem]{Proposition}
\newtheorem{corollary}[theorem]{Corollary}
\newtheorem{remark}[theorem]{Remark}
\newtheorem{example}[theorem]{Example}
\newtheorem{definition}[theorem]{Definition}
\newcommand{\thmref}[1]{Theorem~\ref{#1}}
\newcommand{\cW}{{\cal W}}
\newcommand{\R}{{\mathbb R}}
\newcommand{\Rp}{{\mathbb R}_+}
\newcommand{\Rpm}{{\mathbb R}_+^m}
\newcommand{\Rpn}{{\mathbb R}_+^n}
\newcommand{\Rpnn}{{\mathbb R}_+^{n\times n}}
\newcommand{\Rpmn}{{\mathbb R}_+^{m\times n}}
\newcommand{\cK}{{\mathcal K}}
\newcommand{\crit}{{\mathcal G}_c}
\newcommand{\digr}{{\mathcal G}}
\newcommand{\SisV}{\operatorname{Sis}V}
\def\attr{\mathop{\rm attr}}
\def\kd{\hfil$\square$\linebreak}
\newcommand{\supp}{\operatorname{supp}}
\newcommand{\Sat}{\operatorname{Sat}}
\newcommand{\spann}{\operatorname{span}_{\oplus}}
\newcommand{\diag}{\operatorname{diag}}
\newcommand{\Attr}{\operatorname{attr}}
\newcommand{\RR}{{\mathbb{R}}}
\def\mbf#1{\mbox{\boldmath$#1$}}
\begin{document}

%
\setcounter{footnote}{1}

\begin{frontmatter}
\setcounter{footnote}{1}
\title{$\mbf{X}$-simple image eigencones of tropical matrices}
\author[rvt]{J\'an Plavka\fnref{fn1} }
\ead{jan.plavka@tuke.sk}

\author[rvt2]{Serge{\u\i} Sergeev\corref{cor}}
\ead{sergiej@gmail.com}

\address[rvt]{Department of Mathematics and Theoretical Informatics,  Technical  University,\\
B. N\v emcovej 32, 04200 Ko\v sice, Slovakia}

\address[rvt2]{University of Birmingham, School of Mathematics, Edgbaston B15 2TT, UK}

\cortext[cor]{Corresponding author. Email: sergiej@gmail.com}
\fntext[fn1]{Supported by   APVV grant 04-04-12.}

\begin{abstract}

We investigate max-algebraic (tropical) one-sided systems $A\otimes x=b$ where $b$ is an eigenvector and 
$x$ lies in an interval $\mbf{X}$. A matrix $A$ is said to have $\mbf{X}$-simple image eigencone associated with an eigenvalue $\lambda$, if any eigenvector $x$ associated with $\lambda$ and belonging to the interval $\mbf{X}$ is the unique solution of the system
$A\otimes y=\lambda x$ in $\mbf{X}$. We characterize matrices with
$\mbf{X}$-simple image eigencone geometrically and combinatorially, and 
for some special cases, derive criteria that can be efficiently checked in 
practice.

\end{abstract}
\begin{keyword}
Max algebra, one-sided system, weakly robust, interval analysis \\
{\it
AMS classification: 15A18, 15A80, 93C55}
\end{keyword}
\end{frontmatter}



\section{Introduction}

\subsection{Problem statement and main results}

In this paper, by max algebra we mean the set of nonnegative numbers $\Rp$ equipped with usual multiplication $a\otimes b:=a\cdot b$
and idempotent addition $a\oplus b:=\max(a,b)$. Algebraically, $\Rp$ equipped with these operations forms a semifield. 
The operations of max algebra are then extended
to matrices and vectors in the usual way, giving rise to an analogue of nonnegative linear algebra.
 
Max-algebraic one-sided systems $A\otimes x=b$ and max-algebraic eigenproblem
$A\otimes x=\lambda x$ are two fundamental problems of max algebra whose solution
goes back to the works of Cuninghame-Green~\cite{CG:79,CG:95}, Vorobyev~\cite{Vor} and Zimmermann~\cite{Zim} and
these two topics are thoroughlly discussed in any textbook of the max-plus (tropical) linear algebra~\cite{BCOQ,But,HOW}. 
Our intention is to consider the situation when the right-hand side of $A\otimes x=b$
is an eigenvector, and also when the solution has to lie in some interval of $\Rpn$.

By an interval of $\Rpn$ we mean a subset of $\Rpn$ of the form
$\mbf{X}=\times_{i=1}^n \mbf{X}_i$, where each $\mbf{X}_i$ is an arbitrary interval 
belonging to $\Rp$, with its upper end possibly equal to $+\infty$ (and lower
end possibly equal to $0$). In particular, $\Rpn$ is an interval of itself.
For each $i$ we denote $\underline{x}_i:=\inf \mbf{X}_i$ and $\overline{x}_i=\sup\mbf{X}_i$. Then we also have 
$\underline{x}:=(\underline{x}_i)_{i=1}^n=\inf\mbf{X}$ and $\overline{x}:=(\overline{x}_i)_{i=1}^n=\sup\mbf{X}$.

The notion of $\mbf{X}$-simple image eigencone, which we introduce next, is
related to the concept of simple image set~\cite{ButSIS}.
By definition, simple image set of $A$ is the set of vectors $b$ such that the system
$A\otimes x=b$ has a unique solution.  If the only solution of the system $A\otimes x=b$ is $x=b$, then $b$ is
called a {\em simple image eigenvector.} 

A matrix $A$ is said to have $\mbf{X}$-simple image eigencone associated with a (fixed) eigenvalue 
$\lambda$, if any eigenvector $x$ associated with the eigenvalue $\lambda$ and belonging to the interval $\mbf{X}$ is the unique solution in $\mbf{X}$ for the system
$A\otimes y=\lambda x$. The characterization of a matrix with  $\mbf{X}$-simple image eigencone is described as the main result of the paper in  Section~\ref{s:int}.

Let us now give more details on the organization of the paper and on the results obtained there. Section~\ref{s:prel} is devoted to basic notions of max algebra and its connections to the theory of digraphs and max-algebraic (tropical) convexity. In particular, we revisit here the spectral theory, focusing on the eigencone associated with an arbitrary eigenvalue, the generating matrix and the critical graph. Some aspects of the diagonal similarity scaling are also briefly discussed. 

Section~\ref{s:sis} starts by discussing the problem of covering the node set of a digraph by ingoing edges. We
proceed with the theory of one-sided systems $A\otimes x=b$ where we describe the solution set to 
such systems and start analysing the case when $b$ is an eigenvector of $A$. The main result of that section is 
Theorem~\ref{t:peter}, which characterizes matrices that have at least one simple image eigenvector corresponding
to an eigenvalue $\lambda$.  

In the beginning of Section~\ref{s:int} we first discuss the relation between $\mbf{X}$-simple image 
eigencone and $\mbf{X}$-weak robustness of a matrix. We then develop an interval version of 
theory of one-sided systems $A\otimes x=b$, i.e., when $x$ has to belong to an interval $\mbf{X}$.
The second part of the section contains the main results of the paper: Theorem~\ref{t:mainres} and 
Theorem~\ref{t:openclosed}. More precisely, Theorem~\ref{t:mainres} characterizes when $A$ has $\mbf{X}$-simple 
image eigencone in general, and Theorem~\ref{t:openclosed} focuses on the case when
$\mbf{X}$ is of a certain special type. 

In the end of the paper we formulate some conclusions and discuss some directions for further research.

\subsection{Motivations}

In the literature, max algebra often appears as max-plus semiring developed over the set $\R\cup\{-\infty\}$
equipped with operations $a\otimes b:=a+b$ and $a\oplus b:=\max(a,b)$.   However, this semiring is isomorphic to the semiring defined above, via a logarithmic transform. Max-plus algebra plays the crucial role in the study of discrete-event dynamic  systems connected with the optimization problems such as scheduling or project management in which the objective function depends on the  operations maximum and plus. The main principle of  discrete-event dynamic systems consisting of $n$ entities (machines~\cite{CG:79,CG:95}, processors~\cite{BSSws}, computers, etc.) is that the entities work interactively, i.e., a
given entity must wait before proceeding to its next event
until certain others have completed their current events. Cuninghame-Green~\cite{CG:79} and Butkovi\v{c}~\cite{But} discussed 
 a hypothetical industrial
discrete-event dynamic system and a multiprocessor interactive system, respectively, which can be 
described by the interferences using recurrence relations\\
$x_i(r+1)=\max(x_1(r)+a_{1i},\ x_2(r)+a_{2i},\dots,\ x_n(r)+a_{ni}),\ i\in\{1,2,\dots,n\}.$
The formula expresses the fact that entity
$i$ must wait with its $r+1$st cycle until entities $j=1,\dots,n$  have   finished their $r$th cycle. The symbol $x_i(r)$
denotes the starting time of the $r$th cycle of entity $i$, and
$a_{ij}$ is the corresponding activity duration at which entity $e_j$ prepares the
outputs (products, components, data, atc.) for entity $e_i$. 
The steady states of such systems correspond to eigenvectors of max-plus matrices, therefore the investigation of properties of eigenvectors is important for the above mentioned applications.

In max-plus algebra the matrices for which the steady states of the systems are reached with any
nontrivial starting vector are called robust. Such matrices have been studied in \cite{But}, \cite{Ser-11}. The matrices for which the steady states of the systems are reached  only if a nontrivial starting vector is an eigenvector of the  matrix are called weakly robust. Efficient characterizations of such 
matrices are described in \cite{BSS}.

In practice, the values of starting vector are not exact numbers and usually they are rather contained in some intervals. Considering matrices and vectors with interval entries is therefore of practical importance.
See~\cite{gazi08,mys05,mys06,p,ro} for some of the recent developments.  
In particular, the weak robustness of an interval matrix is studied in \cite{P1}.

The aim of this paper is to characterize the weak $\mbf{X}$-robustness, i.e., the weak robustness of matrices with initial times confined in an interval vector $\mbf{X}$,  using  $\mbf{X}$-simplicity of image eigencone.


\section{Preliminaries}
\label{s:prel}

\subsection{Matrices and graphs}

Many problems of max algebra can be described and resolved in terms of digraphs (i.e., directed graphs). Let us
give some of the relevant definitions here.

\if{
\begin{definition}[Associated digraphs]
\label{def:digraphs}
{\rm For a matrix $A\in \Rpnn$ and $N=\{1,\ldots,n\}$ the symbol $\digr(A)=(N,E)$ stands for the 
{\em weighted digraph associated with $A$:} the node set of $\digr(A)$ 
is $N$, the edge and the weight of any arc $(i,j)$ is $a_{ij}$. }
\end{definition}
}\fi

\begin{definition}[Associated digraphs]
\label{def:digraphs}
{\rm The {\em weighted digraph associated with} $A\in\Rpnn$ is the digraph
$\digr(A)=(N,E)$ with the node set $N:=\{1,\ldots,n\}$ and the edge set $E$ such that
$(i,j)\in E$ (edge from $i$ to $j$) if and only if $a_{ij}>0$. The number $a_{ij}$ is called the {\em weight} of
$(i,j)$. }
\end{definition}

\begin{definition}[Paths]
\label{def:paths}
{\rm A {\em path} in the digraph $\digr(A)=(N,E)$ is a sequence of nodes
$p=(i_1,\,\ldots,\,i_{k+1})$ such that $(i_j,i_{j+1})\in E$ for
$j=1,\,\ldots,\,k$. A path $p$ is closed if  $i_1 = i_{k+1}$, elementary if all nodes are distinct, and a cycle if it is closed and elementary. The number $k$ is the {\em length} of the path $p$ and
is denoted by $l(p)$. }
\end{definition}


\begin{definition}[Strongly connected components]
\label{def:scc}
{\rm By a \textit{strongly connected component}  (for brevity s.c.c.) of $\digr(A)=(N,E)$ we mean a
subdigraph $\digr'=(N',E')$ with $N'\subseteq N$ and $E'\subseteq E$, 
such that any two distinct nodes $i,j\in N'$ are contained in a
common cycle 
and $N'$ is a maximal subset of $N$ with that property. Particularly, $\digr(A) $ is strongly connected  if $\digr'=(N',E')$ is strongly connected component of $\digr(A)$ with  $N'=N$ and $E'=E$.}
\end{definition}

Powers of max algebraic matrices are closely related to 
optimization on digraphs. Observe that the $i,j$th entry of 
the power $A^{\otimes k}:=\underbrace{A\otimes\ldots\otimes A}_k$ is the 
biggest weight among all paths of length $k$ connecting 
$i$ to $j$.

If we define the formal series $A^+=\bigoplus_{k=1}^{\infty} A^{\otimes k}$ then the $i,j$th entry of $A^+$ (possibly diverging to $+\infty$) equals to the greatest weight among all paths connecting $i$ to $j$. Such weight is guaranteed to be finite if the weight of any cycle in 
$\digr(A)$ does not exceed $1$.


\begin{definition}[Irreducibility]
{\rm A matrix $A\in\Rpnn$  is called {\em irreducible} if 
$\digr(A)$ is strongly connected, and {\em reducible} otherwise.
}
\end {definition}

\begin{definition}[Graph restrictions]
{\rm For arbitrary $K\subseteq N$, we denote by $\digr(A)|_{K}$ the subgraph of $\digr(A)$ consisting of all nodes of 
$K$ and all edges of $\digr(A)$ between the nodes of $K$.}
\end{definition}

\subsection{Geometry}

Max algebra also gives rise to the max-algebraic (tropical) analogue of convexity.

\begin{definition}[Max cone]
\label{def:maxcone}
{\rm A subset $\cK\subseteq\Rpn$ is called a {\em max cone} if we have\\
1) $\lambda x\in \cK$ for any $\lambda\geq 0$ and $x\in \cK$,\\
2) $x\oplus y\in \cK$ for any $x,y\in\cK$.
}
\end{definition}

The name ``max cone'' was suggested in~\cite{BSS}. In the literature this object also appears
as tropical cone or max-plus linear space. 

\begin{definition}[Column span]
\label{def:colspan}
{\rm For $A\in\Rpmn$ define its {\em max-algebraic column span} as
\begin{equation}
\spann(A):=\left\{\bigoplus_{j=1}^n A_{\cdot j}x_j\colon x_j\geq 0\;\forall j\right\}, 
\end{equation}
where $A_{\cdot j}$, for $j=1,\ldots,n$ denotes the $j$-th column of $A$.\\
The set of all positive vectors in $\spann(A)$ will be denoted by $\spann^+(A)$.
}
\end{definition}

It is easily shown that $\spann(A)$ is a max cone. 
Furthermore, $\spann(A)$ is always closed in the 
Euclidean topology~\cite{BSS}.

Consider now the following operator.

\begin{definition}[Projector]
\label{def:proj}
{\rm Let $\cW$ be a closed max cone. Define
\begin{equation}
\label{e:projector}
P_{\cW}(x):=\max\{y\in\cW\colon y\leq x\}.
\end{equation}
}
\end{definition}

In the case $\cW=\spann(A)$ we will write $P_A$ instead of $P_{\spann(A)}$, for brevity.

$P_{\cW}$ is a nonlinear projector on the max cone $\cW$. 
It is  homogeneous ($P_{\cW}(\lambda x)=\lambda P_{\cW} x$) and isotone
($x\leq y\Rightarrow P_{\cW} x\leq P_{\cW} y$.)  
These operators are crucial for tropical convexity: see, e.g.,~\cite{CGQS}.

\subsection{Eigenvalues and eigenvectors}

\begin{definition}[Eigencone]
\label{def:eigencone}
{\rm The set 
\begin{equation}
V(A,\lambda)=\{x\colon A\otimes x=\lambda x\},
\end{equation}
where $A\in\Rpnn$ and $\lambda\geq 0$, is called 
the (max-algebraic) {\em eigencone} of $A$ associated with $\lambda$.
The nonzero vectors of $V(A,\lambda)$ are
(max-algebraic) {\em eigenvectors} of $A$ associated with $\lambda$.\\
The set of all positive vectors in $V(A,\lambda)$ will be denoted by 
$V^+(A,\lambda)$.}
\end{definition}

Note that $V(A,\lambda)$ consists of the eigenvectors associated with $\lambda$ and vector ${\mathbf 0}$. 
Obviously, $V(A,\lambda)$ is a max cone.

\begin{definition}[Maximum cycle geometric mean]
\label{def:mcgm}
{\rm Let $A=(a_{ij})\in\Rpnn$. For any  $i_1,\ldots,i_k\in N$, the {\em geometric mean} of the
cycle $(i_1,i_2\ldots, i_k)$ is defined as $\sqrt[k]{a_{i_1i_2}\cdot\ldots a_{i_ki_1}}$.   
The {\em maximum cycle geometric mean} of $A\in\Rpnn$ equals to}
\begin{equation}
\label{e:lambda}
\lambda(A):=
\max_{k=1}^n\max_{1\leq i_1,\ldots,i_k\leq n} \sqrt[k]{a_{i_1i_2}\cdot\ldots a_{i_ki_1}}.
\end{equation}
\end{definition} 

$\lambda(A)$ is the greatest 
max-algebraic eigenvalue of $A$, for any $A\in\Rpnn$.
 If $A$ is irreducible then $\lambda(A)$ is the only max-algebraic eigenvalue of $A$(e.g.~\cite{But}, Theorem 4.4.8).

Reducible $A\in\Rpnn$ may have up to $n$ max-algebraic eigenvalues, in general. We next give some elements 
of the spectral theory of reducible matrices. Although that theory is usually developed in terms of the Frobenius 
normal form and spectral classes~\cite{But} following a similar development of the spectral theory of 
nonnegative matrices, we choose not to do so, since in this paper we only need 1) the relation between the
critical graph and the saturation graph, 2) the generating matrix of $V(A,\lambda)$.

Denote by $\Lambda(A)$ the set of (max-algebraic) eigenvalues of $A$. General $\lambda\in\Lambda(A)$
can be characterized as maximum cycle geometric mean of a certain subgraph of $\digr(A)$. 

\begin{definition}[Support]
\label{def:support}
For $x\in\Rpn$ the set $\supp(x)=\{i\colon x_i>0\}$ is called the {\em support} of $x$.
\end{definition}

The proof of the following statement is standard, but we give it for the reader's convenience.

\begin{proposition}
\label{p:mcgm}
Let $\lambda\in\Lambda(A)$ and $x\in V(A,\lambda)$ be nonzero. Then $\lambda$ is the maximum
cycle geometric mean of $\digr(A)|_{\supp(x)}$.
\end{proposition}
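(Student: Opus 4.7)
The plan is to show both inequalities: the maximum cycle geometric mean of $\digr(A)|_{\supp(x)}$ is at least $\lambda$ (by exhibiting a cycle whose geometric mean equals $\lambda$), and at most $\lambda$ (by bounding every cycle).

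First I would write out the eigenequation componentwise: $\lambda x_i = \max_j a_{ij} x_j$ for every $i$. Setting $S=\supp(x)$, I would observe that for each $i \in S$ with $\lambda > 0$, the maximum on the right-hand side is positive, so it is attained at some $j \in S$ (the entry $a_{ij}$ must be positive and $x_j$ must be positive). This defines a map $\sigma:S\to S$ with $a_{i,\sigma(i)} x_{\sigma(i)} = \lambda x_i$, i.e.\ every node of $\digr(A)|_S$ has an outgoing ``tight'' edge within $S$.

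Next I would construct a cycle by iterating $\sigma$ from an arbitrary $i_0\in S$. Since $S$ is finite, the sequence $i_0,i_1=\sigma(i_0),i_2=\sigma(i_1),\ldots$ eventually repeats, producing a cycle $(i_k,i_{k+1},\ldots,i_{k+l}=i_k)$ in $\digr(A)|_S$. The geometric mean of this cycle equals
\[
\sqrt[l]{\prod_{j=0}^{l-1} a_{i_{k+j}\,i_{k+j+1}}}
=\sqrt[l]{\prod_{j=0}^{l-1}\lambda\,\frac{x_{i_{k+j}}}{x_{i_{k+j+1}}}} = \lambda,
\]
by telescoping (all $x_{i_\cdot}$ factors cancel because the cycle is closed). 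This gives the lower bound.

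For the upper bound, I would take any cycle $(i_1,\ldots,i_r,i_1)$ in $\digr(A)|_S$ and apply the eigenequation as an inequality: $a_{i_j i_{j+1}} x_{i_{j+1}} \leq \lambda x_{i_j}$. Multiplying around the cycle and cancelling the (positive) $x$'s yields $\prod_j a_{i_j i_{j+1}} \leq \lambda^r$, so its geometric mean is at most $\lambda$. Combining both bounds gives the equality. Finally, for the edge case $\lambda=0$, the eigenequation forces $a_{ij}=0$ whenever $i,j\in S$, so $\digr(A)|_S$ has no edges and its maximum cycle geometric mean is $0=\lambda$ by the convention in Definition~\ref{def:mcgm} (the empty maximum being $0$). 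The only mild subtlety is this degenerate case and the need to check that the tight outgoing edge lies inside $S$ rather than outside it, which is why one must argue from $x_j>0$ rather than merely from $a_{ij}>0$.
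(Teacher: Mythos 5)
Your proposal is correct and follows essentially the same route as the paper's proof: the upper bound by multiplying the inequalities $a_{i_j i_{j+1}} x_{i_{j+1}} \leq \lambda x_{i_j}$ around an arbitrary cycle and cancelling the positive coordinates of $x$, and the lower bound by following tight edges from a node of $\supp(x)$ until the path closes into a cycle of geometric mean exactly $\lambda$. Your explicit treatment of the degenerate case $\lambda=0$ is a small extra care the paper omits, but it does not change the argument.
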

\begin{proof}
Take any cycle $(i_1,\ldots,i_k)$ with all indices belonging to $\supp(x)$. 
Multiplying the inequalities $a_{i_li_{l+1}} x_{i_{l+1}}\leq \lambda x_{i_l}$ for $l=1,\ldots,k-1$
and $a_{i_ki_1}x_{i_1}\leq \lambda x_{i_k}$, and cancelling all the coordinates of $x$ we obtain that
the cycle mean of $(i_1,\ldots,i_k)$ does not exceed $\lambda$.

Now, start with any $j_0\in\supp(x)$ and find $j_1$ such that $a_{j_0j_1} x_{j_1}=\lambda x_{j_0}$.
We again have $j_1\in\supp(x)$. Proceeding this way we obtain a cycle $(j_t,\ldots j_{t+l})$ (for some $t$ and $l$)
with the cycle mean equal to $\lambda$. 
\end{proof}

Let us also recall a useful link between the 
support of an eigenvector of $A$ and the zero-nonzero pattern of $A$.

Let $A\in\Rpnn$ and $J,L\subseteq N$. $A_{J,L}$ denotes the submatrix of $A$ with row index set $J$ and column index set $L$.

\begin{proposition}[e.g. \cite{But}, p.96 ]
\label{p:support}
Let $A\in\Rpnn$,  $x\in V(A,\lambda)$
and $N':=\supp(x)$. Then $A_{N\backslash N', N'}={\textbf{0}}$.
\end{proposition}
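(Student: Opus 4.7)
The plan is to unpack the matrix equation $A\otimes x=\lambda x$ coordinate-wise and read off the required vanishing from the rows indexed by $N\setminus N'$. Recall that in max algebra the $i$-th coordinate of $A\otimes x$ is $\bigoplus_{j\in N} a_{ij} x_j = \max_{j\in N} a_{ij} x_j$, so the eigenvector equation is equivalent to the system
\begin{equation*}
\max_{j\in N} a_{ij}x_j = \lambda x_i \quad \text{for every } i\in N.
\end{equation*}

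Now I would fix an arbitrary $i\in N\setminus N'$. By definition of $N'=\supp(x)$ we have $x_i=0$, hence the right-hand side $\lambda x_i$ is $0$ (this is valid even in the degenerate case $\lambda=0$). Since the left-hand side is a maximum of nonnegative terms, this forces $a_{ij}x_j=0$ for every $j\in N$.

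Restricting attention to $j\in N'$, we have $x_j>0$ by the definition of support, so $a_{ij}x_j=0$ yields $a_{ij}=0$. As $i\in N\setminus N'$ and $j\in N'$ were arbitrary, every entry of the submatrix $A_{N\setminus N',N'}$ is $0$, i.e.\ $A_{N\setminus N',N'}=\mathbf{0}$. There is no real obstacle here; the only point requiring a moment of care is that the argument goes through uniformly regardless of the value of $\lambda$, because the right-hand side of the coordinate equation vanishes solely on account of $x_i=0$.
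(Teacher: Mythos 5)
Your proof is correct and is exactly the standard coordinate-wise argument; the paper itself omits the proof and defers to the cited reference, which argues the same way. Nothing to add.
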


\begin{definition}[Critical graph $\crit(A,x,\lambda)$]
\label{def:critgr1}
For $x\in V(A,\lambda)$, define the critical graph $\crit(A,x,\lambda)$ as the 
subgraph of $\digr(A)$ consisting of all nodes and edges belonging to the cycles of $\digr(A)|_{\supp(x)}$
whose geometric mean is equal to $\lambda$.
\end{definition}

\begin{definition}[Saturation graph]
\label{def:satgr}
{\rm For $x\in V(A,\lambda)$, the {\em saturation graph} $\Sat(A,x,\lambda)$ is the subgraph of $\digr(A)$ with
 set of nodes $N$ and set of edges}
\begin{equation}
\label{e:satgraph}
E_{\Sat}=\{(i,j)\colon a_{ij}x_j=\lambda x_i\neq 0\}.
\end{equation}
\end{definition}

\begin{proposition}[\cite{BCOQ}, Theorems 3.96 and 3.98]
\label{p:satgraph}
For any $x\in V(A,\lambda)$, 
\begin{itemize}
\item[(i)] Every node $i\in N$ such that $x_i\neq 0$ 
has an outgoing edge in $\Sat(A,x,\lambda)$;
\item[(ii)] Any cycle 
in $\Sat(A,x,\lambda)$ belongs to $\crit(A,x,\lambda)$;
\item[(iii)] $\crit(A,x,\lambda)$ is a subgraph of 
$\Sat(A,x,\lambda)$;
\end{itemize}
\end{proposition}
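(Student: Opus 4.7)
The plan is to handle the three parts in the natural order (i), (ii), (iii), using only the defining eigenvector equation $A\otimes x=\lambda x$ together with Proposition~\ref{p:mcgm}.

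For (i), I would fix $i\in N$ with $x_i\neq 0$ and look at the $i$-th coordinate of the eigenvector equation, namely $\bigoplus_{j=1}^n a_{ij}x_j=\lambda x_i$. Assuming $\lambda>0$ (the only case where $\Sat(A,x,\lambda)$ has any edges at all, because the definition requires $\lambda x_i\neq 0$), the right-hand side is nonzero, so the maximum on the left is attained at some index $j$ with $a_{ij}x_j=\lambda x_i\neq 0$. By \eqref{e:satgraph} this gives an outgoing edge $(i,j)$ from $i$ in the saturation graph.

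For (ii), I would take a cycle $(i_1,\dots,i_k,i_1)$ in $\Sat(A,x,\lambda)$. By definition each edge of this cycle satisfies $a_{i_l i_{l+1}} x_{i_{l+1}}=\lambda x_{i_l}$ with every $x_{i_l}\neq 0$. Multiplying these $k$ equalities and cancelling the positive factors $x_{i_l}$ yields
\begin{equation*}
a_{i_1 i_2}\cdots a_{i_k i_1}=\lambda^k,
\end{equation*}
so the geometric mean of the cycle equals $\lambda$; moreover all its nodes lie in $\supp(x)$. By Proposition~\ref{p:mcgm}, $\lambda$ is the maximum cycle geometric mean of $\digr(A)|_{\supp(x)}$, so the cycle is critical and hence belongs to $\crit(A,x,\lambda)$.

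For (iii), I would conversely pick a cycle $(i_1,\dots,i_k,i_1)$ in $\crit(A,x,\lambda)$; by definition its geometric mean equals $\lambda$, so $a_{i_1 i_2}\cdots a_{i_k i_1}=\lambda^k$, and its nodes lie in $\supp(x)$. The coordinatewise eigenvector inequalities give $a_{i_l i_{l+1}}x_{i_{l+1}}\leq \lambda x_{i_l}$ for each $l$; multiplying them, the $x_{i_l}$ factors cancel (they are all positive) and the product of the left-hand sides equals the product of the right-hand sides. Hence each inequality must in fact be an equality, which is exactly the saturation condition \eqref{e:satgraph}, so every edge of the cycle lies in $\Sat(A,x,\lambda)$. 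The only subtlety worth double-checking is the cancellation of the $x_{i_l}$ (which uses that critical nodes lie in $\supp(x)$, itself a consequence of the definition of $\crit(A,x,\lambda)$ via cycles in $\digr(A)|_{\supp(x)}$), but this is built into the definitions and poses no real obstacle.
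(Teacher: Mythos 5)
Your proof is correct. The paper does not prove this proposition at all --- it is quoted from \cite{BCOQ} (Theorems 3.96 and 3.98) --- and your direct verification from the eigenvector equation (maximum attained for (i); multiply the saturation equalities around a cycle and cancel the positive $x_{i_l}$ for (ii); multiply the coordinatewise inequalities around a critical cycle and force equality in each for (iii)) is exactly the standard argument, with the implicit assumption $\lambda>0$ correctly flagged. The only cosmetic remark is that in (ii) the appeal to Proposition~\ref{p:mcgm} is not needed: once the cycle lies in $\digr(A)|_{\supp(x)}$ and has geometric mean $\lambda$, it belongs to $\crit(A,x,\lambda)$ directly by Definition~\ref{def:critgr1}.
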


\begin{definition}[Critical graph $\crit(A,\lambda)$]
\label{def:critgr2}
The critical graph $\crit(A,\lambda)$ associated with $\lambda$
and the set of nodes $N^{\lambda}$ associated with 
$\lambda$:
\begin{equation}
\label{e:critgr2}
\begin{split}
&\crit(A,\lambda):=\crit(A,x',\lambda),\   N^{\lambda}:=\supp(x'),\ \text{where}\\
&\text{for}\ x'\in V(A,\lambda)\ \text{such that}\ \supp(x)\subseteq\supp(x')\,
\forall x\in V(A,\lambda).
\end{split}
\end{equation}
\end{definition}
 
Each $\crit(A,x,\lambda)$ consists of several strongly connected components isolated from each other.

In the following proposition let us collect some facts about the relation between $\crit(A,x,\lambda)$ and $\crit(A,\lambda)$.

\begin{proposition}
\label{p:critAxl}
Let $A\in\Rpnn$, $\lambda\in\Lambda(A)$ and $x\in V(A,\lambda)$. Then
\begin{itemize}
\item[(i)] $\crit(A,x,\lambda)\subseteq\crit(A,\lambda)$;
\item[(ii)] $\crit(A,x,\lambda)=\crit(A,\lambda)|_{\supp(x)}$;
\item[(iii)] $\crit(A,x,\lambda)$ consists of entire strongly connected components of $\crit(A,\lambda)$.
\end{itemize}
\end{proposition}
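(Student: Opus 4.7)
\smallskip
\noindent\textbf{Proof plan.} The three items are related, and I would prove them in order, using the maximality property of $x'$ in Definition~\ref{def:critgr2} together with Proposition~\ref{p:support} as the main tool.

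\emph{Step 1 (proof of (i)).} By the defining property of $x'$ in Definition~\ref{def:critgr2}, $\supp(x)\subseteq \supp(x')=N^{\lambda}$, hence $\digr(A)|_{\supp(x)}$ is a subgraph of $\digr(A)|_{N^{\lambda}}$. A cycle contained in $\digr(A)|_{\supp(x)}$ is also a cycle in $\digr(A)|_{N^{\lambda}}$ with the same geometric mean, so every cycle of geometric mean $\lambda$ used to build $\crit(A,x,\lambda)$ is one of the cycles used to build $\crit(A,\lambda)$. The containment of nodes and edges follows.

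\emph{Step 2 (proof of (ii)).} The inclusion $\crit(A,x,\lambda)\subseteq \crit(A,\lambda)|_{\supp(x)}$ is immediate from (i) together with the fact that all nodes and edges of $\crit(A,x,\lambda)$ lie in $\digr(A)|_{\supp(x)}$. For the opposite inclusion, let $(i,j)$ be an edge of $\crit(A,\lambda)|_{\supp(x)}$, i.e.\ an edge of $\crit(A,\lambda)$ with both endpoints in $\supp(x)$. By the definition of $\crit(A,\lambda)=\crit(A,x',\lambda)$ this edge lies on some cycle $C=(j_1,\ldots,j_k)$ in $\digr(A)|_{N^\lambda}$ of geometric mean $\lambda$. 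I claim every node of $C$ lies in $\supp(x)$. Indeed, by Proposition~\ref{p:support} applied to $x$, we have $a_{pq}=0$ whenever $p\notin\supp(x)$ and $q\in\supp(x)$; therefore the predecessor in $C$ of any node of $C$ already in $\supp(x)$ must itself lie in $\supp(x)$. Starting from the endpoint of $(i,j)$ that is in $\supp(x)$ and traversing $C$ backwards, one concludes that the entire cycle $C$ is in $\digr(A)|_{\supp(x)}$. Hence $C$ contributes to $\crit(A,x,\lambda)$, and so does the edge $(i,j)$ (and its endpoints). The same predecessor argument handles an isolated node of $\crit(A,\lambda)|_{\supp(x)}$, since every node of $\crit(A,\lambda)$ lies on some critical cycle.

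\emph{Step 3 (proof of (iii)).} Recall that each strongly connected component of $\crit(A,\lambda)$ has the property that any two of its nodes lie on a common cycle of $\crit(A,\lambda)$. Suppose one node $j$ of such a component $\mathcal{C}$ lies in $\supp(x)$, and let $i$ be any other node of $\mathcal{C}$. Pick a cycle $C$ in $\crit(A,\lambda)$ passing through both $i$ and $j$; by the same predecessor argument as in Step 2 (starting from $j$), every node of $C$ lies in $\supp(x)$, in particular $i\in\supp(x)$. Thus $\mathcal{C}$ is either entirely inside $\supp(x)$ (in which case it is an s.c.c.\ of $\crit(A,x,\lambda)=\crit(A,\lambda)|_{\supp(x)}$ by (ii)) or entirely outside. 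This is exactly statement (iii).

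The main (minor) obstacle is Step 2: one needs to be sure that once an edge/node of $\crit(A,\lambda)$ meets $\supp(x)$, the whole critical cycle through it stays in $\supp(x)$. Proposition~\ref{p:support} is exactly the tool for this, via the backwards traversal of the cycle; once this key observation is in place, (i) and (iii) follow with little extra work.
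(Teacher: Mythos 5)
Your proof is correct and follows essentially the same route as the paper: both arguments hinge on Proposition~\ref{p:support} ($A_{N\setminus\supp(x),\,\supp(x)}=\mathbf{0}$) to show that a critical cycle meeting $\supp(x)$ must lie entirely inside $\supp(x)$. You simply make explicit (via the backwards cycle traversal) a step that the paper's proof of (ii) states without detail, and you derive (i) directly rather than as a consequence of (ii); neither difference is substantive.
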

\begin{proof}
We only prove (ii) and (iii) since (i) follows from any of them.

\medskip\noindent (ii): Observe that both $\crit(A,x,\lambda)$ and $\crit(A,\lambda)|_{\supp(x)}$ consist of the nodes and edges of the cycles on $\supp(x)$ that have the cycle geometric mean $\lambda$, hence $\crit(A,x,\lambda)=\crit(A,\lambda)|_{\supp(x)}$. 

\medskip\noindent (iii): Since $\crit(A,x,\lambda)$ is defined as a subgraph
consisting of all nodes and edges of some critical cycles, it consists of several isolated strongly connected components, and each of these components
is a subgraph of a component of $\crit(A,\lambda)$. It remains to prove that
none of these subgraphs is proper. 

By the contrary, suppose that one of these components is a 
proper subgraph of a component of $\crit(A,\lambda)$. Since 
$\crit(A,x,\lambda)=\crit(A,\lambda)|_{\supp(x)}$, the component of $\crit(A,\lambda)$ should contain a node in $\supp(x)$ 
and a node not in $\supp(x)$, otherwise
it coincides with the component of $\crit(A,x\lambda)$.
 However, this contradicts with Proposition~\ref{p:support}. 
Hence the claim.
\end{proof}

We further define a generating matrix of $V(A,\lambda)$. 
For that, first define the matrix $A_{\lambda}$ with the columns
\begin{equation}
\label{e:alambda}
(A_{\lambda})_{\cdot i}=
\begin{cases}
A_{\cdot i}/\lambda & \text{if $i\in N^{\lambda}$},\\
{\mathbf 0}, & \text{otherwise}.
\end{cases}
\end{equation}
Here $N^{\lambda}$ is as in~\eqref{e:critgr2}.
$A_{\lambda}^+$ is finite, since the weight of any cycle in $A_{\lambda}$ does not exceed $1$.

\begin{definition}[Generating Matrix]
\label{def:GA}
Let $N_c^{\lambda,1},\ldots,N_c^{\lambda,k}$ be the node sets of the 
strongly connected components of $\crit(A,\lambda)$, and let $j_1,\ldots,j_k$ be the first 
indices in those components. Define the generating matrix of $V(A,\lambda)$ as 
the matrix resulting from stacking the columns  $(A_{\lambda}^+)_{\cdot j_1},\ldots,
(A_{\lambda}^+)_{\cdot j_k}$ together:
\begin{equation}
\label{e:galambda}
G_{A,\lambda}=[(A_{\lambda}^+)_{\cdot j_1},\ldots,
(A_{\lambda}^+)_{\cdot j_k}]
\end{equation} 
\end{definition}


\begin{proposition}[\cite{But} Coro. 4.6.2]
\label{p:generate}
For any nonzero $\lambda\in\Lambda(A)$
\begin{equation}
\label{e:eigspace}
V(A,\lambda)=V(A_{\lambda},1)=\spann(G_{A,\lambda}).
\end{equation}
\end{proposition}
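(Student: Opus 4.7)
The plan is to prove the two claimed equalities separately. The first, $V(A,\lambda)=V(A_\lambda,1)$, I would establish by a direct calculation in one direction and a support argument in the other. The second, $V(A_\lambda,1)=\spann(G_{A,\lambda})$, I would reduce to the classical representation of the one-eigenspace of a matrix whose maximum cycle geometric mean equals one.

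For the first equality, the forward inclusion is a short computation: if $x\in V(A,\lambda)$ then $\supp(x)\subseteq N^{\lambda}$ by the very definition of $N^\lambda$, so only columns indexed by $N^\lambda$ contribute to $A\otimes x$. Applying~\eqref{e:alambda} gives
\begin{equation*}
(A_\lambda\otimes x)_i=\bigoplus_{j\in N^\lambda}(a_{ij}/\lambda)\,x_j=(1/\lambda)(A\otimes x)_i=x_i,
\end{equation*}
so $x\in V(A_\lambda,1)$. The reverse inclusion is more delicate, and here I would invoke Proposition~\ref{p:support} on a maximal-support eigenvector $x'$ with $\supp(x')=N^\lambda$, yielding $a_{ij}=0$ whenever $i\notin N^\lambda$ and $j\in N^\lambda$. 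Then for any $x\in V(A_\lambda,1)$ and any $i\notin N^\lambda$, the equation $x_i=\bigoplus_{j\in N^\lambda}(a_{ij}/\lambda)x_j$ forces $x_i=0$, so $\supp(x)\subseteq N^\lambda$ and the same calculation run backwards yields $A\otimes x=\lambda x$.

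For the second equality, the key observation is that the maximum cycle geometric mean of $A_\lambda$ equals one (since dividing critical edges by $\lambda$ turns critical cycle means into $1$, while all other cycle means stay $\le 1$), with critical graph $\crit(A_\lambda,1)$ coinciding with $\crit(A,\lambda)$; in particular $A_\lambda^+$ has finite entries. At this point I would appeal to the classical Cuninghame-Green / Butkovi\v{c} representation theorem (\cite{But}, Ch.~4): the columns of $A_\lambda^+$ indexed by critical nodes are fixed points of $A_\lambda$; within each strongly connected component $N_c^{\lambda,l}$ of $\crit(A,\lambda)$ these columns are max-scalar multiples of one another (the ratios being given by partial critical weights); and every $x\in V(A_\lambda,1)$ admits the representation $x=A_\lambda^+\otimes x$, which, combined with $\supp(x)\subseteq N^\lambda$ from Step~1 and the within-component proportionality, collapses to a max-combination of the chosen representative columns $(A_\lambda^+)_{\cdot j_l}$.

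The main obstacle is hidden in Step~1: a priori the fixed-point equation $A_\lambda\otimes x=x$ appears strictly less restrictive than $A\otimes x=\lambda x$, since $A_\lambda$ zeroes out every column indexed by $N\setminus N^\lambda$ and thus seems to forget the constraints those columns impose on $V(A,\lambda)$. The resolution relies on precisely the structural content of Proposition~\ref{p:support} applied to the maximal eigenvector, namely that $N^\lambda$ has no incoming edges from its complement; this is exactly what forces any $A_\lambda$-fixed point to vanish off $N^\lambda$ and restores the equivalence with $V(A,\lambda)$.
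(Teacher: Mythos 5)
The paper offers no proof of Proposition~\ref{p:generate}: it is quoted verbatim from Butkovi\v{c} (\cite{But}, Corollary 4.6.2), so there is nothing internal to compare your argument against. Judged on its own, your two-step reduction is correct and is in fact the standard route to this result. Step~1 is sound in both directions: the forward inclusion only needs $\supp(x)\subseteq N^{\lambda}$ (immediate from the definition of $N^{\lambda}$ in~\eqref{e:critgr2}) together with the column formula~\eqref{e:alambda}, and you correctly identify that the reverse inclusion is where the work is, resolving it via Proposition~\ref{p:support} applied to a maximal-support eigenvector so that $A_{N\setminus N^{\lambda},\,N^{\lambda}}=\mathbf{0}$ forces any fixed point of $A_{\lambda}$ to vanish off $N^{\lambda}$. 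Your observation that $\lambda(A_{\lambda})=1$ with $\crit(A_{\lambda},1)=\crit(A,\lambda)$ is also right (every cycle of $\digr(A_{\lambda})$ lies in $N^{\lambda}$, and Proposition~\ref{p:mcgm} applied to $x'$ gives that $\lambda$ is the maximum cycle geometric mean there). The one place where your argument is not self-contained is the final collapse of $x=A_{\lambda}^{+}\otimes x$ onto the critical representative columns: dropping the non-critical columns from that expansion is exactly the content of the principal-eigenvalue representation theorem you cite, and a self-contained proof would need the saturation-graph argument (Proposition~\ref{p:satgraph}: from every node of $\supp(x)$ a saturated path reaches a critical node, whence $x_i=(A_{\lambda}^{+})_{ij}x_j$ for some critical $j$). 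Since the paper itself delegates the whole statement to \cite{But}, delegating just that last classical step is entirely reasonable.
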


\if{
\begin{itemize}
\item[{\rm (i)}] If $x\in V(A,\lambda)$, then no node with index in $N\backslash\supp(x)$
has access to any node with index in $\supp(x)$, in $\digr(A)$. 
\item[{\rm (ii)}] All rows of $G_{A,\lambda}$ with indices not in $N^{\lambda}$ are $0$.
\end{itemize}
\end{proposition} 
}\fi

\subsection{Invertible matrices and diagonal similarity scaling}

The class of invertible matrices in max algebra is quite thin. In fact it coincides with that
in nonnegative algebra, consisting of all products of positive diagonal and permutation 
matrices. The positive diagonal matrices will be especially interesting to us
since they give rise to a particularly useful {\em visualization scaling}, also known as 
a Fiedler-Pt\'ak scaling. For a positive vector $x\in\Rpn$, denote by $\diag(x)$ matrix $X\in\Rpnn$ whose
$i$th diagonal entry is $x_i$ and all off-diagonal entries are $0$.

\begin{proposition}[\cite{SSB}, Theorem 3.7]
\label{p:SSB}
Let $A\in\Rpnn$. For a positive $x\in\Rpn$, let $X=\diag(x)$
and $\Tilde{A}=X^{-1}AX$ with entries $\Tilde{a}_{ij}$ for $i,j=1,\ldots,n$.
\begin{itemize}
\item[{\rm (i)}]  If $x$ satisfies $A\otimes x\leq x$ then $\Tilde{a}_{ij}\leq 1$ and, in particular,
$\Tilde{a}_{ij}=1$ for $(i,j)\in E_c(A,1)$ {\bf (visualization scaling)} .
\item[{\rm (ii)}] There exists a positive $x$ satisfying $A\otimes x\leq x$ such that $\Tilde{a}_{ij}\leq 1$, and $\Tilde{a}_{ij}=1$ if and only if $(i,j)\in E_c(A,1)$ {\bf (strict visualization scaling)}.
\end{itemize}
\end{proposition}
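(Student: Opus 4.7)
For part~(i), the plan is direct algebraic manipulation. Reading $A\otimes x\leq x$ componentwise gives $a_{ij}x_j\leq x_i$ for all $i,j$, so $\tilde a_{ij}=x_i^{-1}a_{ij}x_j\leq 1$. To upgrade this to equality on every critical edge, I would re-use the cycle-multiplication argument from Proposition~\ref{p:mcgm}: along any critical cycle $(i_1,\ldots,i_k,i_1)$ the product of the weights equals $1$, while multiplying the inequalities $a_{i_l i_{l+1}}x_{i_{l+1}}\leq x_{i_l}$ around the cycle gives $\prod_l x_{i_l}\leq \prod_l x_{i_l}$, which forces equality in each factor. Hence $\tilde a_{i_l i_{l+1}}=1$ for every edge of any critical cycle.

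For part~(ii), my plan is to perturb $A$ in a way that separates the critical and non-critical edges. Define $A_\eps$ by replacing each non-critical entry $a_{ij}$ by $a_{ij}/(1-\eps)$ and leaving each critical entry unchanged. Because $\digr(A)$ has only finitely many elementary cycles and every non-critical elementary cycle has geometric mean strictly below $1$, for all sufficiently small $\eps>0$ we have $\lambda(A_\eps)=1$ and $\crit(A_\eps,1)=\crit(A,1)$. Then $A_\eps^{\ast}:=I\oplus A_\eps^{+}$ has finite entries and unit diagonal; set $x:=A_\eps^{\ast}\otimes \mathbf{1}$, where $\mathbf{1}$ is the all-ones vector. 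This $x$ is positive and satisfies $A_\eps\otimes x\leq x$, hence also $A\otimes x\leq x$. On any non-critical $(i,j)$ we get $a_{ij}x_j=(1-\eps)(A_\eps)_{ij}x_j\leq (1-\eps)x_i$, so $\tilde a_{ij}\leq 1-\eps<1$; on any critical edge $A_\eps$ and $A$ coincide, and part~(i) gives $\tilde a_{ij}=1$.

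The main obstacle is verifying that the critical graph is preserved under the perturbation. The key observation is that any elementary cycle lying entirely inside $\crit(A,1)$ already has geometric mean $1$ and is untouched by the perturbation, whereas every other elementary cycle contains at least one non-critical edge whose weight gets inflated. Since there are only finitely many elementary cycles, choosing $\eps<1-\mu$, where $\mu<1$ is the largest geometric mean of a non-critical elementary cycle, keeps $\lambda(A_\eps)=1$ while creating no new critical cycle, so the whole construction is well-defined.
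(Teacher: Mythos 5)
The paper does not prove this proposition at all: it is imported verbatim from \cite{SSB} (Theorem 3.7), so there is no internal argument to compare yours against. Judged on its own, your proof is correct. Part (i) is the standard computation, and the cycle-multiplication argument correctly forces equality on every edge of a cycle of geometric mean $1$. Part (ii) also works: since every cycle that is not critical contains at least one non-critical edge and hence has geometric mean at most $\mu<1$, inflating only the non-critical entries by $(1-\eps)^{-1}$ with $\eps<1-\mu$ raises the geometric mean of any such cycle by a factor of at most $(1-\eps)^{-1}$, keeping it below $1$, while leaving critical cycles untouched; thus $\lambda(A_\eps)=1$ and $\crit(A_\eps,1)=\crit(A,1)$, the Kleene star of $A_\eps$ is finite, $x=A_\eps^{\ast}\otimes\mathbf{1}$ is a positive subeigenvector of $A_\eps$ and a fortiori of $A$, and the factor $(1-\eps)$ gives the strict inequality on non-critical edges while part (i) gives equality on critical ones. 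Two cosmetic remarks: your ``key observation'' that every cycle inside $\crit(A,1)$ has mean $1$ (true, but itself a nontrivial fact usually derived \emph{from} visualization) is not actually needed — the argument only requires that critical cycles are unchanged and that every cycle containing a non-critical edge has mean below $1$ — and you should state the standing hypothesis $\lambda(A)\le 1$ (implicit in the proposition, since otherwise no positive subeigenvector exists). Your perturbation route differs from the argument in \cite{SSB}, which obtains the strictly visualizing vector by taking a conventional (arithmetic-mean) combination of the columns of the Kleene star, exploiting log-convexity of the set of positive subeigenvectors; your version is more elementary and self-contained, at the cost of the explicit $\eps$-bookkeeping.
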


\begin{definition}[Visualization]
\label{def:vis}
A matrix $A=(a_{ij})\in\Rpnn$ is called visualized if $a_{ij}\leq \lambda(A)$ for all $i,j\in N$ and 
strictly visualized if it is visualized and $a_{ij}=\lambda(A)$ holds only for $(i,j)\in\crit(A,\lambda)$.
\end{definition}

 We will also use the following observation about the diagonal similarity scaling, where by $\SisV(A,\lambda)$
we denote the set of vectors in $V(A,\lambda)$ that belong to the simple image set of $A$.
\begin{lemma}
\label{l:scale-siseig}
Let $A\in\Rpnn$, let $X\in\Rpnn$ be a positive diagonal matrix. 
and $\Tilde{A}:=X^{-1}AX$. Then 
\begin{itemize}
\item[{\rm (i)}] $\Lambda(A)=\Lambda(\Tilde{A})$, and $\crit(A,\lambda)=\crit(\Tilde{A},\lambda)$ 
for every $\lambda\in\Lambda(A)$;
\item[{\rm (ii)}] $y\in V(A,\lambda)\Leftrightarrow X^{-1}y\in V(\Tilde{A},\lambda)$;
\item[{\rm (iii)}] $y\in\SisV(A,\lambda)\Leftrightarrow X^{-1}y\in\SisV(\Tilde{A},\lambda)$.
\end{itemize}
\end{lemma}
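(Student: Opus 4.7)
The plan is to handle the three claims in the order (ii), (i), (iii), since (ii) is the most elementary and directly feeds into the others.

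For (ii), I would simply note that because $X$ is a positive diagonal matrix, it is max-algebraically invertible with $X^{-1}$ also being a positive diagonal matrix, and the map $y\mapsto X^{-1}y$ is a bijection of $\Rpn$. Then the chain of equivalences
\[
A\otimes y=\lambda y\iff X^{-1}\otimes A\otimes X\otimes (X^{-1}\otimes y)=\lambda(X^{-1}\otimes y)\iff \Tilde{A}\otimes(X^{-1}y)=\lambda(X^{-1}y)
\]
gives the claim. In particular, $V(A,\lambda)\neq\{{\mathbf 0}\}$ iff $V(\Tilde{A},\lambda)\neq\{{\mathbf 0}\}$, and crucially $\supp(X^{-1}y)=\supp(y)$, which will be reused below.

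For (i), the equality $\Lambda(A)=\Lambda(\Tilde{A})$ is immediate from (ii). For $\crit(A,\lambda)=\crit(\Tilde{A},\lambda)$ I would combine two observations. First, for any cycle $(i_1,\ldots,i_k,i_1)$ the product of weights telescopes under scaling:
\[
\Tilde{a}_{i_1 i_2}\cdots\Tilde{a}_{i_k i_1}=(x_{i_1}^{-1}a_{i_1i_2}x_{i_2})\cdots(x_{i_k}^{-1}a_{i_ki_1}x_{i_1})=a_{i_1i_2}\cdots a_{i_ki_1},
\]
so $\digr(A)$ and $\digr(\Tilde{A})$ have the same edges and the same cycle geometric means. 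Second, by Definition~\ref{def:critgr2}, $\crit(A,\lambda)=\crit(A,x',\lambda)$ for the maximal support eigenvector $x'\in V(A,\lambda)$; by (ii), $X^{-1}x'$ is an eigenvector of $\Tilde{A}$ for $\lambda$ with the same support, and it is again a maximal-support one (since the eigencones are in bijection). Hence $N^{\lambda}$ is the same for $A$ and $\Tilde{A}$, and the critical subgraphs $\crit(A,\lambda)$ and $\crit(\Tilde{A},\lambda)$ both consist of the critical cycles of $\digr(A)|_{N^\lambda}=\digr(\Tilde{A})|_{N^\lambda}$, with identical cycle means.

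For (iii), I would exploit the bijection once more. Recall that $y\in\SisV(A,\lambda)$ means $y\in V(A,\lambda)$ and the system $A\otimes z=y$ has the unique solution in $\Rpn$. By (ii), the first condition translates to $X^{-1}y\in V(\Tilde{A},\lambda)$. For the second, observe that
\[
A\otimes z=y\iff \Tilde{A}\otimes (X^{-1}z)=X^{-1}y,
\]
and since $z\mapsto X^{-1}z$ is a bijection of $\Rpn$, solutions of $A\otimes z=y$ are in bijection with solutions of $\Tilde{A}\otimes w=X^{-1}y$. Therefore one system has a unique solution iff the other does. The only step that requires any care is this last bijective correspondence of solution sets, but it is immediate from positivity of $X$; no combinatorial argument is needed for (iii), so I expect the mildest obstacle in the whole lemma to be only the bookkeeping of the maximal-support eigenvector in (i).
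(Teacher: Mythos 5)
Your proof is correct and follows essentially the same route as the paper: parts (ii) and (iii) rest on the same chain of equivalences $A\otimes y=b\Leftrightarrow X^{-1}AX\otimes(X^{-1}y)=X^{-1}b$ together with the bijectivity of $y\mapsto X^{-1}y$. For part (i) the paper merely cites the facts as well-known, whereas you supply the standard telescoping-cycle-weight and support-preservation details; this is a harmless (and welcome) elaboration, not a different approach.
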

\begin{proof}
The facts described in part (i) are well-known.
Parts (ii) and (iii) follow from the observation that 
$$A\otimes y=b\Leftrightarrow X^{-1}A\otimes y=X^{-1}b\Leftrightarrow X^{-1}AX\otimes (X^{-1}y)=X^{-1}b$$
for all $y,b\in\Rpnn$.
\end{proof}

Let us recall some properties of $A_{\lambda}^+$ and $G_{A,\lambda}$ when $A$ is visualized.
By $E$ we denote a matrix of the same dimensions as $A$ whose every element is $1$.

\begin{proposition}[\cite{SSB}, Proposition 4.1]
\label{p:a+props}
Let $A\in\Rpnn$ be visualized and let $\lambda=\lambda(A)$.
Let $N_c^1,\ldots,N_c^k$ be the node sets of the components of $\crit(A,\lambda)$, and let $K=\{j_1,\ldots,j_k\}$ be the index set of 
the columns of $A_{\lambda}^+$ forming $G_{A,\lambda}$.
\begin{itemize}
\item[{\rm (i)}] For each $r,s\in K$ there exists $\alpha_{rs}\leq 1$ such that 
$(A_{\lambda}^+)_{N_c^rN_c^s}=\alpha_{rs} E_{N_c^rN_c^s}$ and
$(G_{A,\lambda})_{N_c^r,s}=\alpha_{rs} E_{N_c^r,j_s}$. 
\item [{\rm (ii)}] If $r=s$ then $\alpha_{rs}=1$.
\item[{\rm (iii)}] There exists $s$ such that $\alpha_{rs}<1$ for all $r\neq s$.
\end{itemize}
\end{proposition}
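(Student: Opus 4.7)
The plan is to work directly from the definitions of $A_\lambda^+$ and of $\crit(A,\lambda)$, exploiting that in a visualized $A$ every edge has weight $\leq\lambda$, so every edge of $A_\lambda$ has weight $\leq 1$, and an edge of $A_\lambda$ has weight exactly $1$ iff the corresponding edge of $A$ has weight $\lambda$. In particular, every cycle in $\digr(A_\lambda)$ has weight $\leq 1$, and a cycle has weight exactly $1$ iff every one of its edges has weight $\lambda$ in $A$ iff the cycle is a critical cycle.

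For part (i), first fix $r$ and take $i,i'\in N_c^r$. Since $N_c^r$ is an SCC of $\crit(A,\lambda)$, there is a critical path from $i$ to $i'$; each of its edges has weight $1$ in $A_\lambda$, so $(A_\lambda^+)_{ii'}\geq 1$, and since all entries of $A_\lambda^+$ are $\leq 1$ we get $(A_\lambda^+)_{ii'}=1$. Now for arbitrary $i,i'\in N_c^r$ and $j,j'\in N_c^s$, concatenate an internal critical path from $i$ to $i'$, a maximum-weight path from $i'$ to $j'$, and an internal critical path from $j'$ to $j$; by sub-multiplicativity of $A_\lambda^+$ this gives $(A_\lambda^+)_{ij}\geq(A_\lambda^+)_{i'j'}$, and by symmetry the reverse inequality. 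Hence all entries of $(A_\lambda^+)_{N_c^r,N_c^s}$ share a common value $\alpha_{rs}\leq 1$, establishing the block-constant claim. The corresponding statement for $(G_{A,\lambda})_{N_c^r,s}$ is then just the case $j=j_s\in N_c^s$. Part (ii) is already contained in this argument, since the computation above with $r=s$ gives $\alpha_{rr}=(A_\lambda^+)_{ii}=1$ for $i\in N_c^r$.

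For part (iii), I would introduce the binary relation $r\preceq s \Leftrightarrow \alpha_{rs}=1$ on $\{1,\ldots,k\}$ and verify it is a partial order. Reflexivity is (ii). Transitivity follows by concatenating two weight-$1$ paths and using that all path weights are $\leq 1$. The key step is antisymmetry: if $r\neq s$ and $\alpha_{rs}=\alpha_{sr}=1$, then gluing a weight-$1$ path from $N_c^r$ to $N_c^s$ with one from $N_c^s$ back to $N_c^r$ produces a closed walk of weight $1$ in $\digr(A_\lambda)$ visiting nodes of both components, from which one extracts a simple cycle of weight $1$; by the observation above, this cycle is critical, which forces $N_c^r$ and $N_c^s$ into the same strongly connected component of $\crit(A,\lambda)$, contradicting their distinctness. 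Every finite poset has a minimal element, so there is some $s$ with $r\preceq s\Rightarrow r=s$, i.e.\ $\alpha_{rs}<1$ for all $r\neq s$.

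I expect the main obstacle to be the antisymmetry argument in (iii): it is the only place where one must translate an algebraic statement about $A_\lambda^+$ back into a structural statement about $\crit(A,\lambda)$, and it requires the slightly delicate observation that a closed walk of weight $1$ in $A_\lambda$ always contains a simple cycle of weight $1$ (which holds because each edge has weight $\leq 1$, forcing every edge of the walk, and hence of any extracted cycle, to have weight exactly $1$). Parts (i) and (ii) are essentially routine once the visualization hypothesis is unpacked into ``all edges $\leq 1$, critical edges $=1$''.
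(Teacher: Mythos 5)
Your proof is correct. Note that the paper itself only argues part (iii): parts (i) and (ii) are delegated to the cited reference (\cite{SSB}, Proposition 4.1, part 2), so your direct derivation of the block-constancy of $A_\lambda^+$ from visualization (critical edges have weight $1$ in $A_\lambda$, all entries of $A_\lambda^+$ are $\leq 1$, and sandwiching via sub-multiplicativity of $A_\lambda^+$) is a self-contained supplement rather than a different route. For (iii) your argument is the paper's in different clothing: the paper negates the claim, extracts a cyclic sequence $\alpha_{i_1i_2}=\dots=\alpha_{i_li_1}=1$, and contradicts the isolation of the strongly connected components of $\crit(A,\lambda)$; your poset formulation ($r\preceq s\Leftrightarrow\alpha_{rs}=1$, then take a minimal element) packages the same contradiction into the antisymmetry and transitivity checks. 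One small tightening in the antisymmetry step: a single simple cycle extracted from the weight-$1$ closed walk need not visit both $N_c^r$ and $N_c^s$, so it is not that particular cycle that merges the two components. Rather, since the walk has weight $1$ and every edge weight is $\leq 1$, every edge of the walk has weight exactly $1$; decomposing the closed walk into simple cycles shows each such edge lies on a weight-$1$ (hence critical) cycle, so the entire walk lies in $\crit(A,\lambda)$, and it is the walk itself that places $N_c^r$ and $N_c^s$ in one strongly connected component of $\crit(A,\lambda)$. You already record the needed fact (all edges of the walk have weight exactly $1$) in your closing paragraph, so this is a matter of wording, not of substance.
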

\begin{proof}
Parts (i) and (ii) follow from~\cite{SSB} Proposition 4.1, part 2.  For part (iii), note that if it does not hold, then
there exist indices $i_1,\ldots, i_l$ belonging to $K$ such that $\alpha_{i_1i_2}=1,\ldots,\alpha_{i_li_1}=1$. This 
implies existence of a cycle in $\crit(A,\lambda)$ going through different strongly connected components, contradicting
the fact that they are isolated. 
\end{proof}

\section{One-sided systems and simple image eigenvectors}
\label{s:sis}

\subsection{Solving max-algebraic one-sided systems}

In this section we shall  suppose that  $A\in\Rpmn$ is a given matrix and recall the crucial results concerning  a system of  linear equations $A\otimes x=b$. 
Our notation is similar to that introduced in \cite{But}, \cite{Zim}. However, unlike for example in \cite{But}, Section 3.1, we do not assume that $b$ has full support (i.e., is positive), or even that every row and column of $A$
contains a nonzero element.  
Denote 
\begin{equation}
\label{e:SAb}
S(A,b)=\{x\in \Rpn\colon A\otimes x=b\}. 
\end{equation}
For any $j\in N$ denote
\begin{equation}
\label{e:x*def}
\begin{split}
\gamma^*_j(A,b)&=\max\{\alpha\in\Rp\cup\{+\infty\}\colon\alpha A_{\cdot j}\leq b\}\\
&=\min_{i\colon a_{ij}\neq 0} b_i a_{ij}^{-1},
\end{split}
\end{equation}
assuming that $0\cdot (+\infty)=(+\infty)\cdot 0=0$, $M=\{1,\dots,m\}$ and
\begin{equation} 
M_j(A,b)=\{i\in M\colon a_{ij}\gamma^*_j(A,b)=b_i\neq 0\}.
\end{equation}
Vector $\gamma^*(A,b)=(\gamma^*_j(A,b))_{j=1}^n$ is closely related to the 
projection: $P_A(b)= A\otimes \gamma^*(A,b)$. 

\if{
the projection $P_A(b)$:
\begin{lemma}
\label{l:projx*}
For any $A\in\Rpnn$ and $b\in\Rpm$ we have $P_A(b)=A\otimes \gamma^*(A,b)$.
\end{lemma}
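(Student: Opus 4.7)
The plan is to show that the candidate $A\otimes\gamma^*(A,b)$ is an element of $\spann(A)$, lies below $b$ coordinatewise, and dominates every other element of $\spann(A)$ that lies below $b$. Together these three properties identify it with $P_A(b)=\max\{y\in\spann(A)\colon y\leq b\}$ and, as a by-product, prove that this max actually exists.

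First I would verify the feasibility: $A\otimes\gamma^*(A,b)$ is tautologically a max-linear combination of the columns of $A$, hence in $\spann(A)$. For the bound $A\otimes\gamma^*(A,b)\leq b$, I use the very definition of $\gamma_j^*(A,b)$ in~\eqref{e:x*def} as the largest scalar $\alpha$ with $\alpha A_{\cdot j}\leq b$: this gives $\gamma_j^*(A,b)\,A_{\cdot j}\leq b$ componentwise for each $j$, and taking the max over $j$ yields
\[
A\otimes\gamma^*(A,b)=\bigoplus_{j=1}^n \gamma_j^*(A,b)\,A_{\cdot j}\leq b.
\]

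For maximality, take any $y\in\spann(A)$ with $y\leq b$ and write $y=A\otimes z$ with $z\in\Rpn$. Then for each $j$ we have $z_j A_{\cdot j}\leq A\otimes z=y\leq b$, so by the defining property of $\gamma_j^*(A,b)$ we get $z_j\leq \gamma_j^*(A,b)$. Coordinatewise monotonicity of $\otimes$ then delivers $y=A\otimes z\leq A\otimes\gamma^*(A,b)$, which is exactly what maximality requires.

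The only real subtlety, which I expect to be the main obstacle, is the bookkeeping for columns $A_{\cdot j}$ that are identically zero, or for entries $b_i=0$ with some $a_{ij}>0$. In the former case the min in~\eqref{e:x*def} is taken over an empty index set, so $\gamma_j^*(A,b)=+\infty$; in the latter case $\gamma_j^*(A,b)$ is forced to $0$. Under the convention $0\cdot(+\infty)=(+\infty)\cdot 0=0$ adopted just before~\eqref{e:x*def}, the product $\gamma_j^*(A,b)\,A_{\cdot j}$ remains a well-defined vector in $\Rpm$ and the two inequalities above go through unchanged, so no separate case analysis is needed.
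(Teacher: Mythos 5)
Your proof is correct: the paper merely asserts the identity $P_A(b)=A\otimes\gamma^*(A,b)$ without proof, and your three-step verification (membership in $\spann(A)$, the bound $A\otimes\gamma^*(A,b)\leq b$ from the max-characterization of $\gamma_j^*$, and maximality via $z_jA_{\cdot j}\leq b\Rightarrow z_j\leq\gamma_j^*(A,b)$) is exactly the standard argument one would supply. The only cosmetic point is that when $A_{\cdot j}=\mathbf{0}$ one should formally replace the coefficient $\gamma_j^*(A,b)=+\infty$ by a finite value (say $0$) to match the definition of $\spann(A)$, which uses finite coefficients $x_j\geq 0$; this changes nothing since the corresponding term is the zero vector, and you have effectively noted this in your final paragraph.
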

}\fi

$\gamma^*(A,b)$  can have a $+\infty$ component in general, 
but only in the case when the corresponding column is zero: $A_{\cdot j}={\mathbf 0}$.
\if{
\begin{lemma}
\label{l:x*}
\begin{itemize}
\item  $\gamma_j^*(A,b)=\infty$ if and only if $A_{\cdot j}={\mathbf 0}$;
\item  If $\gamma_j^*(A,b)\in\Rp$ then
\begin{equation}
\label{e:x*props}
\begin{split}
\gamma_j^*(A,b)&=\min_{i\colon a_{ij}\neq 0} b_i a_{ij}^{-1},
a_{kj} \gamma_j^*(A,b)= b_k\Leftrightarrow k\in\arg\min_{i\colon a_{ij}\neq 0} b_i a_{ij}^{-1}.
\end{split}
\end{equation}
\end{itemize}
\end{lemma}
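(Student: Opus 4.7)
The statement is essentially a routine unpacking of the two expressions for $\gamma_j^*(A,b)$ given in~\eqref{e:x*def}: the variational characterization $\max\{\alpha\in\Rp\cup\{+\infty\}\colon\alpha A_{\cdot j}\leq b\}$ on the one hand, and the explicit formula $\min_{i\colon a_{ij}\neq 0} b_i a_{ij}^{-1}$ on the other. The plan is to handle the two items independently, splitting in each case according to whether the column $A_{\cdot j}$ is zero or carries at least one positive entry.

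For part (i), if $A_{\cdot j}={\mathbf 0}$, I would observe that $\alpha A_{\cdot j}={\mathbf 0}\leq b$ holds for every $\alpha\in\Rp\cup\{+\infty\}$, using the convention $0\cdot(+\infty)=0$ recalled just below~\eqref{e:x*def}; hence the supremum is $+\infty$. Conversely, if there exists $i$ with $a_{ij}>0$, the $i$-th coordinate of the inequality $\alpha A_{\cdot j}\leq b$ forces $\alpha\leq b_i a_{ij}^{-1}<+\infty$, so $\gamma_j^*(A,b)$ must be finite.

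For part (ii), I would assume $\gamma_j^*(A,b)\in\Rp$, i.e.\ that some $a_{ij}$ is positive. The constraint $\alpha A_{\cdot j}\leq b$ is then the pointwise system $\alpha a_{ij}\leq b_i$ for $i\in M$; rows with $a_{ij}=0$ impose no restriction, and the remaining rows together give $\alpha\leq\min_{i\colon a_{ij}\neq 0} b_i a_{ij}^{-1}$, which is therefore exactly the maximal admissible $\alpha$. For the equivalence, if $k$ satisfies $a_{kj}\ne 0$, then $a_{kj}\gamma_j^*(A,b)=b_k$ rearranges to $b_k a_{kj}^{-1}=\gamma_j^*(A,b)$, which is precisely the condition that $k$ attain the minimum.

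No single step is a genuine obstacle; the only thing that requires care is tracking the $0/+\infty$ arithmetic consistently, and explicitly restricting the equivalence to indices with $a_{kj}\ne 0$ so that the degenerate case $b_k=0$ (which is solved trivially by any $k$ with $a_{kj}=0$ but is excluded from the $\arg\min$ index set by convention, as in the definition of $M_j(A,b)$) does not create a spurious counterexample.
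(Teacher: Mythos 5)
Your proof is correct. There is nothing in the paper to compare it against: this lemma occurs only inside a commented-out block of the source, and the paper merely asserts its content in prose (``$\gamma^*(A,b)$ can have a $+\infty$ component \ldots only in the case when the corresponding column is zero'') without proof, so your direct unpacking of the two expressions in~\eqref{e:x*def} is exactly the intended routine argument. You are also right to insist on restricting the displayed equivalence to indices $k$ with $a_{kj}\neq 0$: an index with $a_{kj}=0$ and $b_k=0$ satisfies $a_{kj}\gamma_j^*(A,b)=b_k$ under the convention $0\cdot(+\infty)=0$ yet cannot belong to the $\arg\min$, so the statement as literally written needs that caveat.
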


Let us show the following.
}\fi

The following lemma is crucial for the theory of $A\otimes x=b$.

\begin{lemma}[e.g. \cite{But} Theorem 3.1.1]
\label{l:x*2}
$x\in S(A,b)$ if and only if $x\leq \gamma^*(A,b)$ and 
$\cup_{j\colon x_j=\gamma_j^*(A,b)} M_j(A,b)=\supp(b)$.
\end{lemma}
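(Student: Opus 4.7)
The plan is to prove the two directions of the equivalence separately, using only the definitions of $\gamma^*_j(A,b)$ and $M_j(A,b)$ together with the convention $0\cdot(+\infty)=0$. The whole argument reduces to comparing, coordinate by coordinate, the value $b_i$ with the entries $a_{ij}x_j$.

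For the forward direction, assume $x\in S(A,b)$, so $a_{ij}x_j\leq b_i$ for every $i,j$ and $\max_j a_{ij}x_j=b_i$ for every $i$. First I would deduce $x\leq \gamma^*(A,b)$: for each $j$ with $A_{\cdot j}\neq {\mathbf 0}$, the inequality $a_{ij}x_j\leq b_i$ at every $i$ with $a_{ij}\neq 0$ gives $x_j\leq b_ia_{ij}^{-1}$, hence $x_j\leq \gamma^*_j(A,b)$; if $A_{\cdot j}={\mathbf 0}$, then $\gamma^*_j(A,b)=+\infty$ and the inequality is vacuous. Next, for the covering condition, I would fix $i\in\supp(b)$ and choose $j_0$ attaining the maximum, so $a_{ij_0}x_{j_0}=b_i\neq 0$. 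Because $a_{ij_0}\neq 0$, the defining formula gives $a_{ij_0}\gamma^*_{j_0}(A,b)\leq b_i$; combined with $x_{j_0}\leq \gamma^*_{j_0}(A,b)$ and $a_{ij_0}x_{j_0}=b_i$, this forces $x_{j_0}=\gamma^*_{j_0}(A,b)$ and therefore $i\in M_{j_0}(A,b)$. The reverse inclusion is immediate since every $i\in M_j(A,b)$ satisfies $b_i\neq 0$.

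For the converse, assume $x\leq \gamma^*(A,b)$ and $\bigcup_{j:x_j=\gamma^*_j(A,b)} M_j(A,b)=\supp(b)$. The upper bound $A\otimes x\leq b$ follows directly: from $\alpha A_{\cdot j}\leq b$ at $\alpha=\gamma^*_j(A,b)$ one gets $a_{ij}\gamma^*_j(A,b)\leq b_i$ for all $i,j$ (here the convention $0\cdot(+\infty)=0$ covers zero columns), and then $a_{ij}x_j\leq a_{ij}\gamma^*_j(A,b)\leq b_i$. To show equality at coordinate $i$, I would split on whether $b_i=0$: in that case the upper bound already yields $a_{ij}x_j=0$ for every $j$, so $(A\otimes x)_i=0=b_i$; in the case $b_i\neq 0$, the covering hypothesis furnishes some $j_0$ with $x_{j_0}=\gamma^*_{j_0}(A,b)$ and $i\in M_{j_0}(A,b)$, so $a_{ij_0}x_{j_0}=a_{ij_0}\gamma^*_{j_0}(A,b)=b_i$, and combined with the upper bound this gives $(A\otimes x)_i=b_i$.

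The argument is essentially book-keeping. The only subtlety that needs care is handling zero columns of $A$ (where $\gamma^*_j(A,b)=+\infty$) via the convention $0\cdot(+\infty)=0$, and the fact that the definition of $M_j(A,b)$ requires $b_i\neq 0$, which is exactly what makes the covering set equal to $\supp(b)$ rather than all of $M$. No deeper idea is needed beyond the equivalence of ``$x$ satisfies the inequalities'' and ``$x\leq \gamma^*(A,b)$'', together with the observation that equality at a given row $i$ with $b_i\neq 0$ must be realised by some coordinate $j_0$ that saturates the bound $x_{j_0}\leq\gamma^*_{j_0}(A,b)$.
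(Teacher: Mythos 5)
Your proof is correct and is the standard bookkeeping argument; the paper itself omits the proof and simply cites Butkovi\v{c}'s Theorem 3.1.1, and your coordinate-by-coordinate verification (including the careful treatment of zero columns via $0\cdot(+\infty)=0$ and of rows with $b_i=0$) is exactly the intended argument.
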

\if{
\begin{proof}
The inequality $x\leq \gamma^*(A,b)$ follows from~\eqref{e:x*def}. To prove the rest, assume for the contrary that
there is $i$ such that $b_i>0$ and $i\notin M_j(A,b)$ for all $j\colon x_j=\gamma_j^*(A,b)$. 
By Lemma~\ref{l:x*} $a_{ij}\gamma_j^*(A,b)<b_i$ for all $j\colon x_j=\gamma_j^*(A,b)$, but we also have
$a_{ij}x_j<b_i$ for all $j\colon x_j<\gamma_j^*(A,b)$. But this contradicts $A\otimes x=b$.
\end{proof}
}\fi

We now give a description of the solution set of $A\otimes x=b$ in terms of minimal coverings.

\begin{theorem}
\label{t:ax=b}
Let $\Omega$ be a collection of minimal subsets $N'\subseteq N$ such that
$\bigcup_{j\in N'} M_j(A,b)=\supp(b)$. Then 
\begin{equation}
\label{e:S-SN}
S(A,b)=\bigcup_{N'\in\Omega} S_{N'}(A,b),
\end{equation}
where
\begin{equation}
\label{e:Nprime}
\begin{split}
S_{N'}(A,b)&=\{x\in\Rpn \colon x_j=\gamma_j^*(A,b)\;\text{for}\; j\in N',\\ 
x_j&\leq \gamma_j^*(A,b)\;\text{for}\; j\in N\backslash N'\}.
\end{split}
\end{equation}
\end{theorem}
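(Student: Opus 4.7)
The plan is to derive this characterization directly from Lemma~\ref{l:x*2}, which already encodes the two constraints that a solution must satisfy: the componentwise bound $x\leq\gamma^*(A,b)$ and the covering condition $\bigcup_{j:x_j=\gamma_j^*(A,b)}M_j(A,b)=\supp(b)$. The theorem simply reorganizes this information by parametrizing $S(A,b)$ according to which minimal sub-cover of $\supp(b)$ is ``used'' by a given solution.

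For the inclusion $\bigcup_{N'\in\Omega}S_{N'}(A,b)\subseteq S(A,b)$, I would fix $N'\in\Omega$ and $x\in S_{N'}(A,b)$. By definition of $S_{N'}(A,b)$, we have $x\leq\gamma^*(A,b)$, so the first condition of Lemma~\ref{l:x*2} holds. Moreover, the set $J(x):=\{j\in N\colon x_j=\gamma_j^*(A,b)\}$ contains $N'$, so
\[
\bigcup_{j\in J(x)}M_j(A,b)\supseteq\bigcup_{j\in N'}M_j(A,b)=\supp(b).
\]
Since each $M_j(A,b)\subseteq\supp(b)$ by its very definition, the opposite inclusion is automatic, and Lemma~\ref{l:x*2} delivers $x\in S(A,b)$.

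For the reverse inclusion $S(A,b)\subseteq\bigcup_{N'\in\Omega}S_{N'}(A,b)$, I would take an arbitrary $x\in S(A,b)$ and again consider $J(x)=\{j\colon x_j=\gamma_j^*(A,b)\}$. Lemma~\ref{l:x*2} guarantees $\bigcup_{j\in J(x)}M_j(A,b)=\supp(b)$, so the family of subsets of $J(x)$ that cover $\supp(b)$ is nonempty and finite. Choose $N'\subseteq J(x)$ to be minimal with respect to inclusion among those subsets; then $N'\in\Omega$. By construction, $x_j=\gamma_j^*(A,b)$ for every $j\in N'$, while Lemma~\ref{l:x*2} already ensures $x_j\leq\gamma_j^*(A,b)$ for every $j\in N\setminus N'$, so $x\in S_{N'}(A,b)$.

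There is no real obstacle here; the argument is essentially bookkeeping on top of Lemma~\ref{l:x*2}. The only point to handle carefully is to observe that minimality of $N'$ within the collection of covers of $\supp(b)$ built from subsets of $N$ (not merely from subsets of $J(x)$) is still achieved: any further reduction of $N'$ inside $J(x)$ that preserves the covering property would also preserve it among all subsets of $N$, so a subset minimal in $J(x)$ is minimal in $N$, and thus lies in $\Omega$.
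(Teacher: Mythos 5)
Your proof is correct and follows essentially the same route as the paper: both directions reduce to Lemma~\ref{l:x*2}, with the nontrivial inclusion obtained by extracting a minimal sub-cover $N'\subseteq\{j\colon x_j=\gamma_j^*(A,b)\}$ of $\supp(b)$ (and your observation that minimality within $J(x)$ coincides with minimality within $N$ is a correct, if routine, point the paper leaves implicit). The only cosmetic difference is that the paper verifies the easy inclusion by directly computing $\bigoplus_{j\in N'}A_{\cdot j}x_j=b$ and $\bigoplus_{j\in N\setminus N'}A_{\cdot j}x_j\leq b$, whereas you re-invoke Lemma~\ref{l:x*2}; both are fine.
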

\begin{proof}
We first show that $S(A,b)\subseteq \bigcup_{N'\subseteq\Omega} S_{N'}(A,b)$. If $x\in S(A,b)$
then by Lemma~\ref{l:x*2} we have $x\leq \gamma^*(A,b)$ and $\bigcup_{j\colon x_j=\gamma_j^*(A,b)} M_j(A,b)=\supp(b)$. Hence 
$x\in S_{N'}(A,b)$ for some minimal $N'\subseteq\{j\colon x_j=\gamma_j^*(A,b)\}$ such that 
$\cup_{j\in N'} M_j(A,b)=\supp(b)$.

Let $x\in S_{N'}(A,b)$. Then $\bigoplus_{j\in N'} A_{\cdot j} x_j=b$ and 
$\bigoplus_{j\in N\backslash N'} A_{\cdot j} x_j\leq b$, hence $A\otimes x=b$.
\end{proof}


\begin{corollary}
\label{c:ax=b}
Let us have $\cup_{j\in N'} M_j(A,b)=\supp(b)$ for some proper $N'\subseteq N$,
and let $i\notin N\backslash N'$. Then for each $\alpha\leq \gamma_i^*(A,b)$ there 
exists $x\in S_{N'}(A,b)\subseteq S(A,b)$ with $x_i=\alpha$. 
\end{corollary}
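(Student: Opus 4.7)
The claim is essentially a direct corollary of the second half of the proof of \thmref{t:ax=b}, so my plan is to just construct a witness $x$ explicitly. Reading the hypothesis as $i\in N\setminus N'$ (the case in which the claim is nontrivially true, since a proper $N'$ guarantees $N\setminus N'\neq\emptyset$), the point is that the defining constraints of $S_{N'}(A,b)$ only restrict the coordinates outside $N'$ by the upper bound $\gamma_j^*(A,b)$, leaving any smaller value admissible.

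Concretely, I would set $x_j:=\gamma_j^*(A,b)$ for every $j\in N'$, set $x_i:=\alpha$, and set $x_j:=0$ for every remaining $j\in N\setminus(N'\cup\{i\})$. The equality $x_j=\gamma_j^*(A,b)$ for $j\in N'$ holds by construction. For the inequality $x_j\leq\gamma_j^*(A,b)$ on $N\setminus N'$ there are two subcases: $j=i$ gives $x_i=\alpha\leq\gamma_i^*(A,b)$ by hypothesis, while $j\in N\setminus(N'\cup\{i\})$ gives $x_j=0\leq\gamma_j^*(A,b)$ trivially. Hence $x\in S_{N'}(A,b)$, and the inclusion $S_{N'}(A,b)\subseteq S(A,b)$ provided by \thmref{t:ax=b} finishes the proof.

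The only small technicality is that $\gamma_j^*(A,b)$ may equal $+\infty$ for some $j\in N'$, namely when $A_{\cdot j}={\mathbf 0}$; then the recipe above would produce a non-finite coordinate. However, in that case $M_j(A,b)=\emptyset$, so such a $j$ contributes nothing to the covering $\bigcup_{j\in N'}M_j(A,b)=\supp(b)$ and can be dropped from $N'$ without invalidating the hypothesis. After this cosmetic cleanup the construction yields a vector in $\Rpn$, so there is really no substantive obstacle: the corollary is a direct unpacking of \thmref{t:ax=b}.
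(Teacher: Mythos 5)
Your proof is correct and follows essentially the same route as the paper's: the paper simply says the claim ``follows from~\eqref{e:Nprime} and~\eqref{e:S-SN}'' after reducing to a minimal $N'$, and your explicit witness (together with the cleanup discarding columns with $\gamma_j^*(A,b)=+\infty$) is exactly the unpacking of that reduction. Your handling of the $+\infty$ technicality is in fact slightly more careful than the paper's one-line WLOG.
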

\begin{proof}
We can assume without loss of generality that $N'$ is a minimal subset such that 
$\cup_{j\in N'} M_j(A,b)=\supp(b)$. The claim then follows from~\eqref{e:Nprime} and~\eqref{e:S-SN}.
\end{proof}

Let us now formulate, without proof, conditions for existence and uniqueness of 
a finite solution to $A\otimes x=b$. Here $\overline{S}(A,b):=\{x\in (\R\cup\{+\infty\})^n\colon A\otimes x=b\}$.

\begin{proposition}[e.g., \cite{But}, Coro. 3.1.2]
\label{p:existence}
Let  $A\in \Rpmn$ and $b\in \Rpm$. Then the following conditions are equivalent:
\begin{enumerate}
\item[{\rm (i)}]
$S(A,b)\neq \emptyset$,
\item[{\rm (ii)}]
$\gamma^*(A,b)\in \overline{S}(A,b)$, 
\item[{\rm (iii)}]
$\bigcup\limits_{j\in N}M_j(A,b)=\supp(b).$
\end{enumerate}
\end{proposition}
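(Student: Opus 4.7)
The plan is to prove a cycle of implications (i) $\Rightarrow$ (iii) $\Rightarrow$ (ii) $\Rightarrow$ (i); each step is short and essentially reads off the definitions of $\gamma^*(A,b)$ and $M_j(A,b)$.

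For (i) $\Rightarrow$ (iii), let $x\in S(A,b)$ and take any $i\in\supp(b)$. Since $(A\otimes x)_i=b_i>0$, there exists $j$ with $a_{ij}x_j=b_i$. By definition of $\gamma^*_j(A,b)$ we have $a_{ij}x_j\leq a_{ij}\gamma^*_j(A,b)\leq b_i$, so the middle quantity equals $b_i$ and hence $i\in M_j(A,b)$. Thus $\supp(b)\subseteq\bigcup_j M_j(A,b)$, and the reverse inclusion is immediate from the definition of $M_j(A,b)$. (Equivalently one can cite Lemma~\ref{l:x*2}.)

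For (iii) $\Rightarrow$ (ii), I would verify $A\otimes\gamma^*(A,b)=b$ one row at a time. The upper bound $a_{ij}\gamma^*_j(A,b)\leq b_i$ is built into the definition of $\gamma^*_j$ (with the convention $0\cdot(+\infty)=0$ taking care of zero columns), so $(A\otimes\gamma^*(A,b))_i\leq b_i$ for every $i$. If $b_i=0$, equality is automatic; if $b_i>0$, hypothesis (iii) gives some $j$ with $i\in M_j(A,b)$, i.e.\ $a_{ij}\gamma^*_j(A,b)=b_i$, so the $i$-th coordinate attains $b_i$. Hence $\gamma^*(A,b)\in\overline{S}(A,b)$.

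For (ii) $\Rightarrow$ (i), the only subtlety is that $\gamma^*(A,b)$ may have $+\infty$ entries, and this is the main obstacle to watch for. However, by the remark preceding Lemma~\ref{l:x*2}, such a component $j$ can only arise when $A_{\cdot j}=\mathbf{0}$. Define $\tilde x\in\Rpn$ by $\tilde x_j:=\gamma^*_j(A,b)$ whenever this is finite and $\tilde x_j:=0$ otherwise. For every $i$, the terms $a_{ij}\tilde x_j$ that we possibly changed satisfy $a_{ij}=0$, so $a_{ij}\tilde x_j=0=a_{ij}\gamma^*_j(A,b)$. Therefore $A\otimes\tilde x=A\otimes\gamma^*(A,b)=b$ and $\tilde x\in S(A,b)\neq\emptyset$, which closes the cycle.
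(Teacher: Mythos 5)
Your proof is correct and complete; the paper itself states this proposition without proof (deferring to Butkovi\v{c}'s Corollary 3.1.2), and your cyclic argument (i)$\Rightarrow$(iii)$\Rightarrow$(ii)$\Rightarrow$(i) is exactly the standard one, using that $\gamma^*(A,b)$ is the greatest candidate solution. The two points that usually get glossed over --- that $x\in S(A,b)$ forces $x_j\le\gamma_j^*(A,b)$, and that $+\infty$ components of $\gamma^*(A,b)$ occur only for zero columns and can be truncated to $0$ --- are both handled properly.
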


\begin{proposition}[e.g., \cite{But}, Coro. 3.1.3]
\label{p:uniqueness}
Let  $A\in \Rpmn$ and $b\in \Rpm$, and let the
solution to $A\otimes x=b$ exist. Then the following are equivalent:
\begin{itemize}
\item[{\rm (i)}] $A\otimes x=b$ has unique solution;
\item[{\rm (ii)}] $S(A,b)=\{\gamma^*(A,b)\}$;
\item[{\rm (iii)}] 
$\cup_{j\neq i} M_j(A,b)\neq\supp(b)\quad\forall i\colon \gamma_i^*(A,b)\neq 0.$
\end{itemize}
\end{proposition}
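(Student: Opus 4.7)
The plan is to derive (i)~$\Leftrightarrow$~(ii) as an immediate consequence of Proposition~\ref{p:existence}, and then to establish (ii)~$\Leftrightarrow$~(iii) by invoking the parameterization of $S(A,b)$ from Theorem~\ref{t:ax=b} together with Corollary~\ref{c:ax=b}. Since a solution is assumed to exist, Proposition~\ref{p:existence} gives $\gamma^*(A,b)\in S(A,b)$, so if the system has a unique solution it must coincide with $\gamma^*(A,b)$, proving (i)~$\Rightarrow$~(ii); the reverse implication (ii)~$\Rightarrow$~(i) is trivial.

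For (iii)~$\Rightarrow$~(ii) I would first observe that whenever $\gamma_j^*(A,b)=0$ one has $M_j(A,b)=\emptyset$, because the defining condition $a_{ij}\gamma_j^*(A,b)=b_i\neq 0$ cannot then hold. Consequently, in any minimal cover $N'$ arising in Theorem~\ref{t:ax=b}, every $j\in N'$ satisfies $\gamma_j^*(A,b)\neq 0$. Moreover, (iii) applied to each $i$ with $\gamma_i^*(A,b)\neq 0$ forces any such $N'$ to contain $i$: otherwise $N'\subseteq N\setminus\{i\}$ would give $\cup_{j\neq i}M_j(A,b)\supseteq\supp(b)$, contradicting~(iii). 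Hence the unique minimal cover is $N^{\star}:=\{j\colon\gamma_j^*(A,b)\neq 0\}$, and formulas~\eqref{e:S-SN}--\eqref{e:Nprime} force every $x\in S(A,b)$ to satisfy $x_j=\gamma_j^*(A,b)$ on $N^{\star}$ and $x_j\leq\gamma_j^*(A,b)=0$ on $N\setminus N^{\star}$, so that $x=\gamma^*(A,b)$.

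For the converse $\neg$(iii)~$\Rightarrow$~$\neg$(ii), I would pick an $i$ with $\gamma_i^*(A,b)\neq 0$ such that $\cup_{j\neq i}M_j(A,b)=\supp(b)$, choose any minimal $N'\subseteq N\setminus\{i\}$ still covering $\supp(b)$, and apply Corollary~\ref{c:ax=b} (with $i$ lying in $N\setminus N'$) to produce $x\in S_{N'}(A,b)\subseteq S(A,b)$ with $x_i=0\neq\gamma_i^*(A,b)$, a second solution distinct from $\gamma^*(A,b)$. The only real obstacle is the careful bookkeeping that separates indices with $\gamma_j^*(A,b)=0$, which contribute an empty $M_j$ and are forced to zero in every solution, from those with $\gamma_j^*(A,b)\neq 0$, which by~(iii) must appear in every minimal cover; once this dichotomy is made explicit, both directions of (ii)~$\Leftrightarrow$~(iii) follow directly from Theorem~\ref{t:ax=b} and Corollary~\ref{c:ax=b}.
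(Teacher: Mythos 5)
The paper states this proposition without proof (citing \cite{But}, Coro.~3.1.3), so there is no in-paper argument to compare against; judged on its own, your derivation is correct and is the natural one given the machinery the paper does develop (Lemma~\ref{l:x*2}, Theorem~\ref{t:ax=b}, Corollary~\ref{c:ax=b}). Two small points of care. First, Proposition~\ref{p:existence} only gives $\gamma^*(A,b)\in\overline{S}(A,b)$, not $\gamma^*(A,b)\in S(A,b)$: in this paper's setting $A$ is allowed to have zero columns, in which case $\gamma_j^*(A,b)=+\infty$ and $\gamma^*(A,b)\notin\Rpn$. Your step (i)$\Rightarrow$(ii) therefore needs the one-line observation that a zero column $j$ makes $x_j$ a free coordinate of every solution, so uniqueness already forces $\gamma^*(A,b)$ to be finite (and, symmetrically, such a $j$ has $M_j(A,b)=\emptyset$ while $\gamma_j^*(A,b)\neq 0$, so (iii) fails as well, keeping the three conditions equivalent in this degenerate case). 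Second, you read Corollary~\ref{c:ax=b} with $i\in N\backslash N'$, which is the intended (and only sensible) hypothesis; the printed condition $i\notin N\backslash N'$ is a typo, and your use of it with $\alpha=0\leq\gamma_i^*(A,b)$ is legitimate. With these caveats, your dichotomy between indices with $\gamma_j^*(A,b)=0$ (empty $M_j(A,b)$, coordinate forced to zero) and those with $\gamma_j^*(A,b)\neq 0$ (forced by (iii) into every minimal cover) correctly yields $\Omega=\{N^{\star}\}$ and hence $S(A,b)=\{\gamma^*(A,b)\}$, so both directions of (ii)$\Leftrightarrow$(iii) go through.
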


\subsection{Digraph coverings and systems with eigenvector on the right-hand side}

Let $\digr=(N,E)$ be a strongly connected digraph. Define
\begin{equation}
\label{e:Mjdigr}
M_j(\digr)=\{i\in N\colon (i,j)\in E\}
\end{equation}

\if{
We will need the notions of covering and minimal covering. 
If $D$ is a set and ${\mathcal E}\subseteq {\mathcal P}(D)$ is a set of subsets of $D$,
then ${\mathcal E}$ is said to be a {\em covering} of $D$, if $\bigcup{\mathcal E}=D$ and a covering
${\mathcal E}$ of $D$ is called {\em minimal}, if $\bigcup({\mathcal E}-{F})\neq D$ holds for every $F\in {\mathcal E}$.
}\fi

\begin{theorem}
\label{t:graphcover}
Let $\digr=(N,E)$ be a strongly connected digraph. Then, $i\in N$ with the property
$\cup_{j\in N\backslash\{i\}} M_j(\digr)=N$ exists if and only if $\digr$ is not a cycle.
\end{theorem}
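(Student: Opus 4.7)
The plan is to first translate the covering condition into an outgoing-edge condition on the digraph, then handle the easy direction directly and the hard direction by contrapositive with a short counting argument.

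First I would reformulate: by the definition $M_j(\digr)=\{k:(k,j)\in E\}$, the equality $\bigcup_{j\in N\setminus\{i\}}M_j(\digr)=N$ holds exactly when every node $k\in N$ has at least one outgoing edge to some node different from $i$. I will work with this equivalent combinatorial condition throughout.

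For the direction $(\Leftarrow)$, assume $\digr$ is a cycle $v_1\to v_2\to\cdots\to v_n\to v_1$. Then every node has out-degree exactly $1$, and for any fixed $i=v_s$, its unique in-neighbor $v_{s-1}$ has its only outgoing edge going to $v_s=i$. Hence $v_{s-1}\notin\bigcup_{j\neq i}M_j(\digr)$, and the covering condition fails for every choice of $i$, as required.

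For the direction $(\Rightarrow)$ I argue by contrapositive: suppose no $i$ satisfies the condition. Then for every $i\in N$ there exists some $k(i)\in N$ all of whose outgoing edges go into $i$. Since $\digr$ is strongly connected, $k(i)$ has at least one outgoing edge, so in fact $k(i)$ has out-degree exactly $1$ and its unique out-neighbor is $i$. Let $S=\{k\in N:\ \text{out-degree of }k\text{ equals }1\}$. The assignment $k\mapsto$ its unique out-neighbor defines a map $\varphi:S\to N$, and by the previous step every $i\in N$ lies in the image of $\varphi$; hence $|S|\geq|N|$, so $S=N$, i.e.\ every node of $\digr$ has out-degree exactly one.

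The final step is to conclude that a strongly connected digraph in which every node has out-degree one must be a single cycle. Starting from any node and following the unique outgoing edges produces an eventually periodic sequence whose periodic part is a cycle $C$; if $C$ did not cover $N$, strong connectivity would demand an edge from some node of $C$ to a node outside $C$, contradicting that the nodes of $C$ already have their unique outgoing edge inside $C$. Hence $\digr$ is a cycle. The main obstacle in this argument is really just the counting step showing $S=N$; once one spots that $\varphi$ must be surjective, the rest is a short structural observation about functional digraphs restricted by strong connectivity.
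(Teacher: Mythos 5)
Your proof is correct, and for the substantive direction it takes a genuinely different route from the paper. The paper proves that a non-cycle admits a removable index \emph{constructively}: it picks a node with at least two incoming edges, grows the in-neighbourhood layers $N_1\subseteq N_2\subseteq\cdots$ from it, and locates the index $i$ by a case analysis on how the layer sizes evolve. You instead prove the contrapositive by a counting argument: if no index is removable, then for each $i$ some node $k(i)$ sends all of its (at least one, by strong connectivity) outgoing edges to $i$, so $k(i)$ has out-degree exactly one and the map $\varphi$ from the set of out-degree-one nodes to their unique out-neighbours is surjective onto $N$, forcing every node to have out-degree one; a strongly connected digraph with all out-degrees equal to one is a single cycle. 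Your argument is shorter and avoids the paper's delicate case distinctions (e.g.\ the subcase $|N_t\setminus N_{t-1}|=1$), at the price of being non-constructive: the paper's proof actually exhibits which index $i$ can be dropped, which could matter algorithmically, whereas yours only certifies existence --- which is all the theorem and its later uses (e.g.\ in Theorem~\ref{t:peter}) require. Two cosmetic remarks: your direction labels are swapped relative to the statement (what you call $(\Leftarrow)$ in fact establishes ``such $i$ exists $\Rightarrow$ $\digr$ is not a cycle''), and, like the paper, you implicitly assume $|N|\geq 2$ so that strong connectivity forces every out-degree to be positive.
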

\begin{proof}
Observe first that if $\digr$ is a cycle then there are no nodes with such property.

To prove the converse, observe that if $\digr$ is not a cycle, then it contains two intersecting cycles,
and one of the nodes in the intersection will have at least two ingoing edges.

Take a node with at least two incoming edges, and number this node as $1$. 
Put $N_0=\emptyset$ and $N_1:=\{1\}$. 
For each $l\geq 1$ let 
\begin{equation}
\label{e:Nldef1}
N_{l+1}=N_l\cup \bigcup_{j\in N_l} M_j(\digr).
\end{equation}
Observe that since $\bigcup_{j\in N_{l-1}} M_j(\digr)\subseteq N_l$,
we can replace~\eqref{e:Nldef1} with
\begin{equation}
\label{e:Nldef2}
N_{l+1}=N_l\cup \bigcup_{j\in N_l\backslash N_{l-1}} M_j(\digr).
\end{equation}
Hence for each $l\geq 1$ we have 
\begin{equation}
\label{e:Nl}
N_{l+1}\backslash N_l\subseteq \bigcup_{j\in N_l\backslash N_{l-1}} M_j(\digr).
\end{equation}


Since $\digr$ is finite, there is $t>1$ that satisfies $N_{t+1}=N_t$ and $N_{t-1}\neq N_t$. 
Observe that $N_t=N$, for if $N_t$ is a proper subset of $N$, then this contradicts the
assumption that $\digr$ is strongly connected. 

Consider the case when $|N_t\backslash N_{t-1}|>1$. Observe that a covering of $N$ can be built by taking all nodes in $N_{t-1}$ 
and a node in $N_t\backslash N_{t-1}$ that has an ingoing edge from $1$, 
or just the nodes in 
$N_{t-1}$ if one of the nodes of $N_{t-1}$ has an ingoing edge
from $1$.
Therefore in both of these cases there exists $i\in N_t\backslash N_{t-1}$ with 
$\cup_{j\in N\backslash\{i\}} M_j(\digr)=N$. 

Consider the remaining case when $|N_t\backslash N_{t-1}|=1$ and when the node forming $N_t\backslash N_{t-1}$
is the only node that has an ingoing edge from $1$. 
Since $|N_2\backslash N_1|>1$ there exists 
an $s<t$ with $|N_{s+1}\backslash N_s|<|N_s\backslash N_{s-1}|$. 
Since $N_s\backslash N_{s-1}$
does not contain a node to which $1$ is connected with an edge, for some 
node $i\in N_s\backslash N_{s-1}$ we have
\begin{equation}
\label{e:Nli}
N_{s+1}\backslash N_s=\cup_{j\in N_s\backslash N_{s-1}, j\neq i} M_j(\digr).
\end{equation}
Combining this with~\eqref{e:Nl} for all other $l\neq s$ we obtain that
$$
N=\cup_{j\neq i} M_j(\digr).
$$ 
This completes the proof.
\end{proof}

\begin{corollary}
\label{c:graphcover}
For each $x\in V(A,\lambda)$, an $i$ with the property 
$\cup_{j\neq i} M_j(\crit(A,\lambda,x))=N_c(A,x,\lambda)$ exists if and only if 
$\crit(A,x,\lambda)$ is not a union of disjoint cycles.
\end{corollary}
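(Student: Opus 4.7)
The plan is to reduce Corollary~\ref{c:graphcover} to Theorem~\ref{t:graphcover} by working component by component, exploiting the fact (Proposition~\ref{p:critAxl}(iii), together with the remark after Definition~\ref{def:critgr2}) that $\crit(A,x,\lambda)$ is a disjoint union of isolated strongly connected components. Let $N_c^{1},\ldots,N_c^{k}$ denote the node sets of these components, so that $N_c(A,x,\lambda)=N_c^{1}\cup\cdots\cup N_c^{k}$, and let $\digr^r:=\crit(A,x,\lambda)|_{N_c^r}$. Since the components are isolated, for any $j\in N_c^r$ we have $M_j(\crit(A,x,\lambda))=M_j(\digr^r)\subseteq N_c^r$, so selection/coverage questions decompose cleanly across components.

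For the ``if'' direction, assume $\crit(A,x,\lambda)$ is not a union of disjoint cycles; then some component, say $\digr^r$, is strongly connected but not a cycle. I would apply Theorem~\ref{t:graphcover} to $\digr^r$ to obtain $i\in N_c^r$ with $\bigcup_{j\in N_c^r\setminus\{i\}} M_j(\digr^r)=N_c^r$. For every other component $\digr^s$ ($s\ne r$), strong connectivity alone guarantees that every node has an outgoing edge within $\digr^s$, hence $\bigcup_{j\in N_c^s} M_j(\digr^s)=N_c^s$. Gluing these coverings together via isolation of components yields $\bigcup_{j\ne i} M_j(\crit(A,x,\lambda))=N_c(A,x,\lambda)$.

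For the ``only if'' direction, assume $\crit(A,x,\lambda)$ is a union of disjoint cycles. Then every node has out-degree exactly $1$, so every node $j$ has exactly one predecessor, namely the unique $p(j)$ with $(p(j),j)$ in the cycle containing $j$. Removing any single $i$ from the indexing set leaves $p(i)$ uncovered, since the only arc out of $p(i)$ points to $i$; therefore no choice of $i$ can satisfy $\bigcup_{j\ne i} M_j(\crit(A,x,\lambda))=N_c(A,x,\lambda)$.

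The main (very minor) obstacle is purely bookkeeping: confirming that the componentwise coverage assembles into a global coverage, which boils down to the isolation property of critical components and the identification $M_j(\crit(A,x,\lambda))=M_j(\digr^r)$ for $j\in N_c^r$. Once this is observed, both directions follow immediately from Theorem~\ref{t:graphcover} and the elementary cycle observation, and no further max-algebraic input is required.
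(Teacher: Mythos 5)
Your proof is correct and is essentially the argument the paper intends: the corollary is stated without proof as an immediate consequence of Theorem~\ref{t:graphcover}, obtained exactly as you do by applying that theorem componentwise to the isolated strongly connected components of $\crit(A,x,\lambda)$ (using that in the remaining components every node retains an outgoing edge by strong connectivity, and that in a disjoint union of cycles every node has out-degree one, so removing any $i$ leaves its predecessor uncovered). No gap.
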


We now briefly examine the link to 
one-sided systems with eigenvector on the right-hand side.

\begin{proposition}
\label{p:nicelemma}
Let $x\in V(A,\lambda)$, and let $j\in N$ be such that there exists 
$l$ with $(l,j)\in\Sat(A,x,\lambda)$. Then 
\begin{itemize}
\item[{\rm (i)}] $x\leq \gamma^*(A,\lambda x)$;
\item[{\rm (ii)}] $\gamma_j^*(A,x)=x_j$ and $M_j(A,\lambda x)=
M_j(\Sat(A,x,\lambda))$;
\item[{\rm (iii)}] 
$M_j(\crit(A,\lambda))\subseteq M_j(A,\lambda x)$.
\end{itemize}
\end{proposition}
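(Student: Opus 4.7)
The three parts are essentially a direct unwrapping of definitions, with the existence of a saturating edge $(l,j)$ serving as the witness that makes the relevant inequalities tight at the $j$-th coordinate. I read the first assertion in (ii) as $\gamma_j^*(A,\lambda x)=x_j$ (the other reading seems to be a typo, since both the other parts involve $\lambda x$ on the right-hand side). The forward-looking strategy is: establish (i) from $A\otimes x=\lambda x$; leverage the saturating edge at $j$ to promote the inequality to equality, giving (ii); then use the structural results on $\crit(A,\lambda)$ versus $\crit(A,x,\lambda)$ to transfer a critical in-edge at $j$ into a saturated in-edge, giving (iii).

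\textbf{Part (i).} From $A\otimes x=\lambda x$, the coordinatewise inequality $a_{ij}x_j\leq \lambda x_i$ holds for all $i,j$, i.e.\ $x_j\, A_{\cdot j}\leq \lambda x$. By the defining maximum in~\eqref{e:x*def} this gives $x_j\leq \gamma_j^*(A,\lambda x)$, which is exactly $x\leq \gamma^*(A,\lambda x)$.

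\textbf{Part (ii).} The saturating edge $(l,j)\in \Sat(A,x,\lambda)$ means, by~\eqref{e:satgraph}, that $a_{lj}x_j=\lambda x_l\neq 0$. Using the explicit formula $\gamma_j^*(A,\lambda x)=\min_{i:\,a_{ij}\neq 0}\lambda x_i/a_{ij}$, the index $l$ yields $\gamma_j^*(A,\lambda x)\leq \lambda x_l/a_{lj}=x_j$; combined with (i) this forces $\gamma_j^*(A,\lambda x)=x_j$. Substituting into the definition of $M_j(A,\lambda x)$ gives
\[
M_j(A,\lambda x)=\{i\in N\colon a_{ij}x_j=\lambda x_i\neq 0\},
\]
which is, by~\eqref{e:satgraph} and~\eqref{e:Mjdigr}, exactly $M_j(\Sat(A,x,\lambda))$.

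\textbf{Part (iii).} Let $i\in M_j(\crit(A,\lambda))$, so $(i,j)\in E(\crit(A,\lambda))$ and in particular $i,j$ lie on a common critical cycle, hence in the same strongly connected component of $\crit(A,\lambda)$. From $a_{lj}x_j=\lambda x_l\neq 0$ (with $a_{lj}>0$) we get $x_j\neq 0$, so $j\in\supp(x)$. By \lemref{p:critAxl}(ii)--(iii), $\crit(A,x,\lambda)=\crit(A,\lambda)|_{\supp(x)}$ and is composed of whole strongly connected components of $\crit(A,\lambda)$; therefore the entire component containing $j$ lies inside $\supp(x)$, which puts $i$ into $\supp(x)$ as well. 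Hence $(i,j)\in\crit(A,x,\lambda)\subseteq\Sat(A,x,\lambda)$ by \lemref{p:satgraph}(iii), so $a_{ij}x_j=\lambda x_i\neq 0$, and by the characterization of $M_j(A,\lambda x)$ proved in (ii) we conclude $i\in M_j(A,\lambda x)$.

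\textbf{Main obstacle.} There is no deep obstacle here; all three parts follow by carefully chaining definitions. The only delicate point is in (iii), where one must combine \lemref{p:critAxl}(iii) (critical components are whole) with the observation $j\in\supp(x)$ to propagate membership in $\supp(x)$ from $j$ to the entire critical component, and thence to $i$; writing this argument without implicitly assuming $i\in\supp(x)$ from the start is the one place requiring attention.
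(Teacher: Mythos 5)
Your proof is correct and follows essentially the same route as the paper's: (i) from the coordinatewise eigenvector inequality, (ii) by using the saturating edge $(l,j)$ to make the minimum in $\gamma_j^*(A,\lambda x)$ tight, and (iii) via $\crit(A,x,\lambda)\subseteq\Sat(A,x,\lambda)$ together with the fact that $\crit(A,x,\lambda)$ consists of entire components of $\crit(A,\lambda)$. Your reading of the first claim in (ii) as $\gamma_j^*(A,\lambda x)=x_j$ matches what the paper actually proves, and your explicit justification in (iii) that $j\in\supp(x)$ forces the whole critical component of $j$ into $\supp(x)$ is a detail the paper leaves implicit.
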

\begin{proof}
(i): follows by Lemma~\ref{l:x*2}.
(ii): Since $x\in V(A,\lambda)$, we have 
$a_{kj}x_j\leq \lambda x_k$ for all $k$, so
$\gamma_j^*(A,\lambda x)=\min_{i\in N}\{\lambda x_i(a_{ij})^{-1}\}=\lambda x_l(a_{lj})^{-1}=x_j$.\\
$M_j(A,\lambda x)=\{i\colon a_{ij}x_j=\lambda x_i\}$ follows from the definition of these sets,
substituting $\gamma_j^*(A,\lambda x)=x_j$.

(iii): By (ii) we have $M_j(A,\lambda x)=M_j(\Sat(A,x,\lambda)$, and
also $$M_j(\crit(A,x,\lambda))\subseteq M_j(\Sat(A,x,\lambda))=M_j(A,\lambda x)$$
since $\crit(A,x,\lambda)\subseteq \Sat(A,x,\lambda)$ by Proposition~\ref{p:satgraph} part (iii). 
Graph $\crit(A,x,\lambda)$ consists of entire components of
$\crit(A,\lambda)$ (Proposition~\ref{p:critAxl} part (iii)), hence 
$$M_j(\crit(A,\lambda))=M_j(\crit(A,x,\lambda))
\subseteq M_j(\Sat(A,x,\lambda))=M_j(A,\lambda x).$$ 
This concludes the proof.
\end{proof}

\subsection{Simple image eigenvectors}

Denote by $A^{(i)}$ the matrix which remains after the $i$th column of $A$
is removed.

\begin{lemma}[e.g. \cite{But} Theorems 3.1.5 and 3.1.6]
\label{l:sis}
Let $A\in\Rpmn$ and $b\in\Rpm$.
Then, $b$ belongs to the simple image set of $A$ 
if and only if there is no $i$ for which $\gamma_i^*(A,b)\neq 0$ and
$b\in \spann(A^{(i)})$.
\end{lemma}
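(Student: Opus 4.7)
The plan is to deduce the equivalence by combining the existence criterion of Proposition~\ref{p:existence} with the uniqueness criterion of Proposition~\ref{p:uniqueness}. The key preliminary observation is that deleting the $i$th column of $A$ affects neither $\gamma_j^*(A,b)$ nor $M_j(A,b)$ for any $j\neq i$, since both quantities are computed column by column; Proposition~\ref{p:existence} applied to $A^{(i)}$ therefore yields
\begin{equation*}
b\in\spann(A^{(i)})\iff \bigcup_{j\in N\setminus\{i\}} M_j(A,b)=\supp(b).
\end{equation*}
This displayed equivalence is the bridge between the two sides of the lemma.

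For the forward direction, I would suppose that $b$ lies in the simple image set, so that by Proposition~\ref{p:uniqueness} the vector $\gamma^*(A,b)$ is the unique solution of $A\otimes x=b$. If some index $i$ were to satisfy $\gamma_i^*(A,b)\neq 0$ together with $b\in\spann(A^{(i)})$, then picking any $y$ with $A^{(i)}\otimes y=b$ and extending it to $x'\in\Rpn$ by setting $x'_i:=0$ (keeping the remaining coordinates of $y$) would give another solution, since $A\otimes x'=A^{(i)}\otimes y \oplus A_{\cdot i}\cdot 0=b$. As $x'_i=0\neq\gamma_i^*(A,b)$, this contradicts uniqueness.

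For the reverse direction, the hypothesis translates, via the displayed equivalence, into the statement that $\bigcup_{j\neq i}M_j(A,b)\neq\supp(b)$ for every $i$ with $\gamma_i^*(A,b)\neq 0$. Granted $b\in\spann(A)$, Proposition~\ref{p:existence} delivers $\bigcup_{j\in N}M_j(A,b)=\supp(b)$, and Proposition~\ref{p:uniqueness}(iii) then forces uniqueness of the solution, placing $b$ in the simple image set. The main (and essentially only) delicate point is that the hypothesis is vacuously satisfied when $b\notin\spann(A)$, in which case there is no solution at all; this is implicitly handled by the context of the paper, where $b$ will play the role of an eigenvector and hence automatically belongs to $\spann(A)$. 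Apart from this existence caveat, the argument is a purely formal repackaging of the classical existence and uniqueness criteria.
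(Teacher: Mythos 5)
Your proof is correct and follows essentially the same route as the paper: it translates $b\in\spann(A^{(i)})$ into the covering condition $\bigcup_{j\neq i}M_j(A,b)=\supp(b)$ via Proposition~\ref{p:existence} applied to $A^{(i)}$, and then invokes the uniqueness criterion of Proposition~\ref{p:uniqueness}. Your explicit remark about the case $b\notin\spann(A)$ (where the right-hand side holds vacuously while $b$ cannot be in the simple image set) is a fair observation about an implicit existence assumption that the paper's own proof also makes silently.
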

\begin{proof}
By Proposition~\ref{p:existence} $b\in\spann(A^{(i)})$ if and only if
$\cup_{j\in N\backslash\{i\}} M_j(A,b)=\supp(b)$. 
By Proposition~\ref{p:uniqueness} the non-uniqueness of solution to 
$A\otimes x=b$ is equivalent to the existence of $i$ with
$\cup_{j\in N\backslash\{i\}} M_j(A,b)=\supp(b)$ and 
$\gamma_i^*(A,b)\neq 0$.
\end{proof}

\if{
{\color{blue}{In the cited reference, $b$ is assumed not to have any zero components, so that
$\gamma_i^*(A,b)>0$ for all $i$. If $b$ has $0$ components then we may have 
$\gamma_i^*(A,b)=0$ for some $i$, in which case 
the simple image of $A$ is the same as the simple image of the matrix resulting from
deleting such columns from $A$. Observe that the rows of $A$ corresponding to the zero entries of $b$
are also essential, and deleting these rows from $A$ reduces Lemma~\ref{l:sis} to 
\cite{But} Theorems 3.1.5 and 3.1.6}}
}\fi

Thus, $x$ is not a simple image eigenvector if and only if there exists an $i$ (and $\lambda$) 
such that $x\in \spann(A^{(i)})\cap V(A,\lambda)$ and $\gamma_i^*(A,\lambda x)>0$.

\begin{theorem}
\label{t:peter}
$V(A,\lambda)$ contains simple image vectors if and only if there exists a subset $N'\subseteq N$
with the following properties:
\begin{itemize}
\item[{\rm (i)}] There exists $x\in V(A,\lambda)$ with $\supp(x)=N'$;
\item[{\rm (ii)}] $\gamma_i^*(A,x)=0$ for all $i\notin N'$;
\item[{\rm (iii)}] $N'= N_c(A,x,\lambda)$;
\item[{\rm (iv)}] 
All components of $\crit(A,x,\lambda)=\crit(A,\lambda)|_{N'}$ 
are cycles.
\end{itemize}
\end{theorem}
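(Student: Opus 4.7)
My plan is to prove the two implications separately. The forward implication uses the characterization of uniqueness in Proposition~\ref{p:uniqueness} together with the $\Sat$/$\crit$ structural relations of Propositions~\ref{p:satgraph} and~\ref{p:nicelemma}, while the backward implication invokes strict visualization (Proposition~\ref{p:SSB}(ii)) to construct an explicit simple image eigenvector. Starting with the forward direction, let $x\in V(A,\lambda)$ be a simple image eigenvector and set $N':=\supp(x)$, so (i) holds trivially. To prove (ii) and the auxiliary fact ``every $j\in N'$ has an incoming $\Sat$-edge'' I use a coordinate-pumping argument: if (ii) fails for some $i\notin N'$, the vector obtained from $x$ by replacing $x_i=0$ with $\gamma_i^*(A,\lambda x)>0$ is a second solution of $A\otimes y=\lambda x$ (because $a_{\cdot i}\gamma_i^*(A,\lambda x)\leq\lambda x$ by construction); similarly, if some $j\in N'$ has no incoming $\Sat$-edge then $\gamma_j^*(A,\lambda x)>x_j$ and pumping $x_j$ upward to that value still solves the system, since no row attains its maximum through column $j$. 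In either case simplicity is contradicted. Proposition~\ref{p:support} then gives $\gamma_j^*(A,\lambda x)>0$ for every $j\in N'$, and Proposition~\ref{p:nicelemma}(ii) identifies $M_j(A,\lambda x)$ with the $\Sat$-in-neighbourhood of $j$.

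Conditions (iii) and (iv) now follow by a combinatorial extraction. For each $i\in N'$ Proposition~\ref{p:uniqueness} supplies a witness $k_i\in N'$ with $k_i\in M_i(A,\lambda x)$ and $k_i\notin M_j(A,\lambda x)$ for $j\neq i$, so that $k_i$'s only $\Sat$-out-edge is $(k_i,i)$. If $k_i=k_{i'}$ with $i\neq i'$ the singleton $\Sat$-out-neighbourhood forces $\{i\}=\{i'\}$, contradicting $i\neq i'$; hence $i\mapsto k_i$ is an injection $N'\to N'$, i.e.\ a bijection, so every node of $N'$ has $\Sat$-out-degree exactly one and the successor map is a permutation of $N'$. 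Therefore $\Sat(A,x,\lambda)$ is a disjoint union of cycles on $N'$, and Proposition~\ref{p:satgraph}(ii)--(iii) forces $\Sat(A,x,\lambda)=\crit(A,x,\lambda)$, which yields both $N'=N_c(A,x,\lambda)$ (condition (iii)) and that each component of $\crit(A,x,\lambda)$ is a cycle (condition (iv)).

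For the backward direction, assume $N'$ satisfies (i)--(iv). By Proposition~\ref{p:mcgm} applied to the eigenvector from (i) we have $\lambda(A_{N',N'})=\lambda$, so $A_{N',N'}/\lambda$ has principal eigenvalue $1$, and Proposition~\ref{p:SSB}(ii) supplies a positive $x'\in\Rp^{|N'|}$ that strictly visualizes $A_{N',N'}/\lambda$. Condition (iii) guarantees that every node of $N'$ lies on a critical cycle, so every row of the scaled matrix attains the value $1$; hence $x'$ is in fact an eigenvector of $A_{N',N'}$ with eigenvalue $\lambda$. Extending $x'$ by zeros outside $N'$ yields $x\in\Rpn$, and Proposition~\ref{p:support} (applied to the eigenvector from (i)) gives $A_{N\setminus N',N'}=\mathbf{0}$, so $x\in V(A,\lambda)$ with $\supp(x)=N'$. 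Strict visualization then produces $\Sat(A,x,\lambda)=\crit(A,\lambda)|_{N'}$, which by (iv) is a disjoint union of cycles covering $N'$; thus each $M_j(A,\lambda x)$ with $j\in N'$ is a singleton and removing any column $i\in N'$ strictly shrinks the covering, while (ii) forces $\gamma_i^*(A,\lambda x)=0$ for $i\notin N'$. Proposition~\ref{p:uniqueness} now certifies $x$ as a simple image eigenvector.

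The main obstacle I anticipate is the combinatorial leap in the forward direction from the existence of individual witnesses $k_i$ to the global functional-graph structure of $\Sat(A,x,\lambda)$; the preliminary coordinate-pumping lemma forcing $\gamma_j^*(A,\lambda x)=x_j$ for every $j\in N'$ is essential here, since otherwise the identification of $M_j$ with $\Sat$-in-neighbourhoods would fail. In the backward direction the delicate point is promoting the strictly visualized subeigenvector to a genuine eigenvector with the correct support, for which condition (iii) ensures every row is tight and condition (ii) (together with Proposition~\ref{p:support}) ensures the zero extension lies in $V(A,\lambda)$.
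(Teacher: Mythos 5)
Your proof is correct, and while your ``if'' direction follows essentially the paper's route (reduce to the support, strictly visualize via Proposition~\ref{p:SSB}, observe that the saturation graph of the resulting eigenvector is the disjoint union of critical cycles, and read off uniqueness from the disjoint singleton sets $M_j$), your ``only if'' direction is genuinely different. The paper first reduces to $N'=N$ via the correspondence with $A_{N'N'}$, then handles (iii) by a hand-built covering argument on the saturation graph (growing a cover of $N$ that omits some non-critical column) and (iv) by invoking the digraph-covering result (Theorem~\ref{t:graphcover} via Corollary~\ref{c:graphcover}). You instead extract, for each column $i\in N'$, a \emph{private} witness row $k_i\in M_i(A,\lambda x)\setminus\bigcup_{j\neq i}M_j(A,\lambda x)$ directly from Proposition~\ref{p:uniqueness}(iii), show $i\mapsto k_i$ is a bijection of $N'$, and conclude that $\Sat(A,x,\lambda)$ is the graph of a permutation of $N'$; Proposition~\ref{p:satgraph}(ii)--(iii) then forces $\Sat=\crit$ and delivers (iii) and (iv) in one stroke. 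This is cleaner and bypasses Theorem~\ref{t:graphcover} entirely for this implication (though not for the converse, where the disjoint-cycle structure still does the work); the paper's covering machinery, on the other hand, is reused elsewhere (e.g.\ in Theorem~\ref{t:case2}), so it is not wasted there. Your preliminary ``pumping'' lemma (every $j\in\supp(x)$ of a simple image eigenvector has an incoming saturation edge, whence $\gamma_j^*(A,\lambda x)=x_j$) is a necessary ingredient that the paper gets implicitly from Propositions~\ref{p:satgraph} and~\ref{p:nicelemma}; your explicit derivation is fine, with the one cosmetic caveat that when column $j$ of $A$ is zero you have $\gamma_j^*(A,\lambda x)=+\infty$, so you should pump $x_j$ to an arbitrary larger \emph{finite} value rather than to $\gamma_j^*$ itself. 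Likewise, when transferring condition (ii) from the eigenvector of (i) to the newly constructed one in the ``if'' direction, it is worth stating (as the paper does) that (ii) depends only on $\supp(x)$ and the zero pattern of $A$.
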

\begin{proof} By Proposition~\ref{p:generate} we can assume without loss of generality
that $\lambda=1$.

``Only if'': Let us first argue that (i) and (ii) are necessary. Let $x$ be a simple image vector in $V(A,\lambda)$, and 
take $N'=\supp(x)$. Condition (i) is immediate, and for (ii) we observe that
that $\gamma_i^*(A,x)>0$ for some $i\notin N'$ would imply that $x$ is not the only solution of the
system $A\otimes y=x$, since  $\gamma^*(A,x)\neq x$. 

As condition (ii) depends only on the support of $x$ and on the zero-nonzero pattern of $A$, it also 
holds for {\em arbitrary} $x\in V(A,1)$ with $\supp(x)=N'$. That condition also implies 
\begin{equation}
\label{e:supporty}
\supp(y)\subseteq N'\ \text{for all}\ y\ \text{such that}\  A\otimes y=x. 
\end{equation}

Moreover, we can further consider the system $A_{N'N'}\otimes y_{N'}=x_{N'}$
with $x_{N'}\in V(A_{N'N'})$, since we have the following correspondence: 
\begin{equation}
\label{e:corr-simple}
\begin{split}
x_{N'}\in V^+(A_{N'N'},1)&\leftrightarrow (x\in V(A,1)\ \&\ \supp(x)=N'),\\
y_{N'}\in S(A,x_{N'})&\leftrightarrow y\in S(A,x).
\end{split}
\end{equation}
In that correspondence, $x$ and $y$ arise from $x_{N'}$ and $y_{N'}$ by setting
$x_{N\backslash N'}=y_{N\backslash N'}={\mathbf 0}$, and $x_{N'}$ and $y_{N'}$ are formed
as the usual subvectors of $x$ and $y$  with indices in $N'$. The first part of that correspondence 
follows from~Proposition~\ref{p:support}, and the second part follows from~\eqref{e:supporty}.  

Therefore, to show that (iii) and (iv) hold we can
further assume that $x$ is positive, that is, $N'=N$. 

Assume for the contrary that (iii) does not hold. 
If $N_c(A,1)\neq N$, consider the indices in $N\backslash N_c(A,1)$. Every node with index in 
$N\backslash N_c(A,1)$ has an outgoing edge in $\Sat(A,x,1)$. It cannot be that all of these 
edges also end in $N\backslash N_c(A,1)$, because then there would be critical cycles and hence critical
components in $N\backslash N_c(A,1)$, a contradiction. Therefore there exists $(i,j)\in E_{\Sat}(A,x,1)$ with $i\notin N_c(A,1)$ and $j\in N_c(A,1)$ implying that
\begin{equation}
\label{e:nci}
N_c(A,1)\cup\{i\}\subseteq \bigcup_{j\in N_c(A,1)} M_j(\Sat(A,x,1)).
\end{equation}
For any $i'\notin N_c(A,1)$ there exists $j'$ such that $i'\in M_j(\Sat(A,x,1)$, and therefore 
$$N_c(A,1)\cup\{i\}\cup\{i'\}\subseteq \bigcup_{j\in N_c(A,1)\cup \{j'\}} M_j(\Sat(A,x,1)).$$ 
Thus adding indices to the left hand side of~\eqref{e:nci} one by one, we obtain that there exists $k\notin N_c(A,1)$ 
such that $N=\cup_{j\neq k}  M_j(\Sat(A,x,1))$, i.e., 
$N=\cup_{j\neq k} M_j(A,x)$, a contradiction.


Now assume that (iii) holds but (iv) does not hold, i.e., one of the components of $\crit(A)$ is not 
a cycle. In this case by Corollary~\ref{c:graphcover} there exists $i$ such that 
$\cup_{j\neq i} M_j(\crit(A,x,1))=N$. However, as $\crit(A,x,1)\subseteq\Sat(A,x,1)$ by 
Proposition~\ref{p:satgraph}, we also have
$M_j(\crit(A,x,1))\subseteq M_j(A,x)$ for all $j\in N$, implying $\cup_{j\neq i} M_j(A,x)=N$, a contradiction. 

``If'': Let us now prove that (i)-(iv) are sufficient. Due to bijection~\eqref{e:corr-simple}
we can assume that $N'=N$. Then, by the main result of~\cite{SSB}, 
there exists a diagonal matrix $X$ such that $\Tilde{A}:=X^{-1}AX$ is strictly visualised,
that is $\Tilde{a}_{ij}\leq 1$, with the equality $\tilde{a}_{ij}=1$ if and only if 
$(i,j)$ is a critical edge, that is, if and only if $(i,j)$ belongs to one of the disjoint critical cycles.
Let $u$ be the vector whose every component is $1$.  For this vector we obtain
$\Sat(\Tilde{A},u,1)=\crit(\Tilde{A},u,1)=\crit(\Tilde{A},1)$ and hence $M_j(\Tilde{A},u)=M_j(\crit(\Tilde{A},1))$ for all $j\in N$. 
Since $\crit(A)$ consists of disjoint cycles only, by Corollary~\ref{c:graphcover} we have 
$\cup_{j\neq i} M_j(\crit(\Tilde{A},1))\neq N$ for any $i\in N$, and since 
$M_j(\Tilde{A},u)=M_j(\crit(\Tilde{A},1))$ for all $j\in N$ we obtain
that $u$ is a simple image eigenvector of $\Tilde{A}$. By Lemma~\ref{l:scale-siseig},
$Xu$ is a simple image eigenvector of $A$. 
\end{proof}

\begin{remark}
\label{r:peter}
{\rm Condition (i) of Theorem~\ref{t:peter} can be expressed in terms of the Frobenius normal form of $A$, see
for instance~\cite{Gau:92} Ch. IV Theorem 2.2.4.
Condition (ii) of Theorem~\ref{t:peter} is equivalent to the following:}
\begin{equation}
\forall i\notin N' \exists l\notin N'\colon a_{li}\neq 0.
\end{equation}
\end{remark}

\if{
\begin{proposition}
\label{p:case1}
If $\cup_{j\neq i} M_j^c(A)=M$  then $W^{(i)}=V(A,\lambda)$.
\end{proposition}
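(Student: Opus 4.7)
The plan is to interpret $W^{(i)}$ as the set of $x\in V(A,\lambda)$ such that $\lambda x\in\spann(A^{(i)})$ (equivalently, the system $A^{(i)}\otimes y=\lambda x$ is solvable), and $M_j^c(A)$ as $M_j(\crit(A,\lambda))$ with $M$ the relevant support set (e.g.\ $N_c(A,\lambda)$ or $\supp(x)$). The inclusion $W^{(i)}\subseteq V(A,\lambda)$ is then trivial by construction, so the whole content of the proposition is the reverse inclusion $V(A,\lambda)\subseteq W^{(i)}$, and this is what I will aim to establish.

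The approach is to feed each $x\in V(A,\lambda)$ into the solvability criterion of Proposition~\ref{p:existence}. First I would fix an arbitrary $x\in V(A,\lambda)$ and consider $b:=\lambda x$. By Proposition~\ref{p:satgraph}(i), every $j\in\supp(x)$ has an incoming saturation edge, so Proposition~\ref{p:nicelemma}(iii) applies and yields the key inclusion
\[
M_j(\crit(A,\lambda))\ \subseteq\ M_j(A,\lambda x)\qquad\text{for every such }j.
\]
Taking the union over $j\neq i$ and invoking the hypothesis $\bigcup_{j\neq i}M_j^c(A)=M$, I would obtain
\[
\supp(\lambda x)\ \subseteq\ M\ =\ \bigcup_{j\neq i}M_j(\crit(A,\lambda))\ \subseteq\ \bigcup_{j\neq i}M_j(A,\lambda x),
\]
where the first inclusion uses Proposition~\ref{p:critAxl}(ii)--(iii) together with Proposition~\ref{p:support} to ensure that $\supp(x)$ is entirely swept out by the critical components reachable from the saturation graph. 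By Proposition~\ref{p:existence}, this coverage of $\supp(\lambda x)$ by $\bigcup_{j\neq i}M_j(A,\lambda x)$ is exactly equivalent to $S(A^{(i)},\lambda x)\neq\emptyset$, so $\lambda x\in\spann(A^{(i)})$ and hence $x\in W^{(i)}$.

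The main obstacle I anticipate is reconciling the exact index set denoted $M$ with $\supp(x)$. If $M=N_c(A,\lambda)$, then since any $\supp(x)$ of an eigenvector is a union of components of $\crit(A,\lambda)$ together with non-critical nodes from which they can be reached (Propositions~\ref{p:support} and~\ref{p:critAxl}), the non-critical indices $j\in\supp(x)\setminus N_c(A,\lambda)$ need separate attention: for each such $j$ one uses Proposition~\ref{p:satgraph}(i) to pick up a saturation edge $(j,j')$ and thus place $j$ into $M_{j'}(A,\lambda x)$ for some $j'\neq i$ (choosing $j'\in\supp(x)$ by Proposition~\ref{p:support}, and noting that $i$ can be avoided since $i$ itself lies in $N_c(A,\lambda)$ under the natural reading of the hypothesis). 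Gluing these two coverages together would complete the proof. If instead $M$ is meant to be $N$ or $\supp(x)$, the argument simplifies accordingly and the non-critical part is subsumed by the hypothesis directly.
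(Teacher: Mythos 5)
Your proof is correct and follows essentially the same route as the paper's (commented-out) one-line argument: both rest on the inclusion $M_j^c(A)=M_j(\crit(A,\lambda))\subseteq M_j(A,\lambda x)$ coming from Proposition~\ref{p:nicelemma}(iii), and then invoke the covering criterion of Proposition~\ref{p:existence} to conclude $V(A,\lambda)\subseteq\spann(A^{(i)})$. The extra care you take in sweeping up the non-critical indices of $\supp(x)$ via saturation out-edges only makes explicit a step that the paper's one-liner silently skips.
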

\begin{proof}
Since $M_j^c(A)\subseteq M_j(A,x)$ for any $x\in V(A)$ and $j\in N$, we obtain 
$\cup_{j\neq i} M_j(A,x)=M$ for all $x\in V(A)$ implying that $V(A)\subseteq \spann(A^{(i)}$.
\end{proof}
}\fi

The next result is mostly needed for Theorem~\ref{t:openclosed}, but it is also of independent 
interest. We formulate it for positive vectors only, for the sake of simplicity.

\begin{theorem}
\label{t:case2}
Let $N_c(A,\lambda)=N$ and let all the s.c.c of $\crit(A,\lambda)$ be cycles and  
$K=\{j_1,\ldots,j_k\}$ be the index set of 
		the columns of $A_{\lambda}^+$ forming $G_{A,\lambda}$. Then
\begin{equation}
(\bigcup_{i\in N} \spann(A^{(i)})\cap V^+(A,\lambda)=(\bigcup_{s\in K} \spann^+(G_{A,\lambda}^{(s)}).
\end{equation}
\end{theorem}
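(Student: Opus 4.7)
The plan is to reduce to a strictly visualized matrix and then analyse both sides combinatorially. First, by Proposition~\ref{p:generate} I may assume $\lambda=1$. Proposition~\ref{p:SSB}(ii) then produces a positive diagonal $X$ with $\tilde{A}=X^{-1}AX$ strictly visualized, and both sides of the identity are covariant under this scaling: Lemma~\ref{l:scale-siseig} handles the LHS (its proof extends verbatim to $\spann(A^{(i)})\cap V^+(A,1)$), while for the RHS a direct computation gives $G_{\tilde{A},1}=X^{-1}G_{A,1}D$ for a positive diagonal $D$, so that rescaling columns preserves the spans. We may therefore assume $A$ itself is strictly visualized with $\lambda(A)=1$.

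Let $N_c^1,\ldots,N_c^k$ be the critical components (each a cycle by hypothesis) with representatives $j_1,\ldots,j_k$, and for a positive eigenvector $x$ let $x^r$ denote the common value of $x_i$ for $i\in N_c^r$ (constancy on components follows by walking around each critical cycle). By Proposition~\ref{p:a+props}(i)--(ii), $g^{(j_s)}_i=\alpha_{rs}$ for $i\in N_c^r$, with $\alpha_{rr}=1$. Writing $x=\bigoplus_s c_s g^{(j_s)}$ and invoking the transitivity $\alpha_{rs}\alpha_{ss'}\leq\alpha_{rs'}$ (a consequence of $A^+\otimes A^+\leq A^+$) yields the \emph{eigenvector inequality} $x^r\geq\alpha_{rs}x^s$ for all $r,s$. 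I also set $\beta^r_s:=\max_{p\in N_c^r,\,q\in N_c^s}a_{pq}$, so that $\alpha_{rs}$ equals the maximum of $\prod_l\beta^{t_l}_{t_{l+1}}$ over all component-level paths $r=t_0\to\cdots\to t_m=s$.

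Next I reformulate each side. By Proposition~\ref{p:existence}(iii), $x\in\spann(A^{(i)})$ iff $\bigcup_{j\neq i}M_j(A,x)=N$, and Proposition~\ref{p:nicelemma}(ii) gives $M_j(A,x)=M_j(\Sat(A,x,1))$ for every $j$. Strict visualization excludes non-critical within-component saturation edges, so for $i\in N_c^{s_0}$ with cycle-predecessor $k_0$, the covering fails for that $i$ precisely when $k_0$ has no cross-component saturation edge; such an edge $(k_0,q)$ with $q\in N_c^{s'}$ satisfies $a_{k_0,q}=x^{s_0}/x^{s'}\leq\beta^{s_0}_{s'}$, which combined with the eigenvector inequality exists iff $x^{s_0}=\beta^{s_0}_{s'}x^{s'}$. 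Taking the union over $i\in N$, the LHS reduces to: \emph{there exist $s\neq s'$ with $x^s=\beta^s_{s'}x^{s'}$}. An analogous covering argument applied to $G$, combined with the computation $\gamma^*_{s'}(G,x)=x^{s'}$ (which again uses the eigenvector inequality, since the minimum of $x^r/\alpha_{rs'}$ is attained at $r=s'$), shows that the RHS reduces to: \emph{there exist $s\neq s'$ with $x^s=\alpha_{ss'}x^{s'}$}.

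Finally I match the two reformulations. The implication LHS $\Rightarrow$ RHS is immediate from $\beta^s_{s'}\leq\alpha_{ss'}$ together with the eigenvector inequality, which forces $x^s=\alpha_{ss'}x^{s'}$ once $x^s=\beta^s_{s'}x^{s'}$. For RHS $\Rightarrow$ LHS, given $x^s=\alpha_{ss'}x^{s'}$ I realise $\alpha_{ss'}$ by a component path $s=t_0\to\cdots\to t_m=s'$ of weight $\prod_l\beta^{t_l}_{t_{l+1}}$ and chain the inequalities $x^{t_l}\geq\beta^{t_l}_{t_{l+1}}x^{t_{l+1}}$; the product equality $x^s\geq\alpha_{ss'}x^{s'}=x^s$ forces equality at every step, and in particular $x^s=\beta^s_{t_1}x^{t_1}$ with $t_1\neq s$, which is the LHS condition. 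The main obstacle I anticipate is the LHS reformulation in paragraph three: one must carefully trace which saturation edges disappear when column $i$ is removed, and use strict visualization to show that the unique potentially-missing critical edge at $k_0$ can be replaced only by a cross-component saturation edge.
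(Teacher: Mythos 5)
Your proof is correct, and it follows the same overall skeleton as the paper's argument -- translate membership in each union into a covering condition via Proposition~\ref{p:existence}, then prove the two coverings equivalent after a diagonal similarity scaling -- but the execution is genuinely different. The paper fixes a positive $x\in V(A,1)$ and scales by $\diag(x)$, so that the saturation graph of $x$ becomes exactly the set of unit entries of $\tilde{A}$; the equivalence of the two coverings is then obtained by exhibiting unit-weight paths in $\digr(\tilde{A})$ and reading off unit entries of $\tilde{A}^+$ and $G_{\tilde{A},1}$ through Proposition~\ref{p:a+props}. You instead apply a single strict visualization (Proposition~\ref{p:SSB}(ii)) and keep $x$ variable, encoding everything in the component-level scalars $x^r$, $\alpha_{rs}$ and your $\beta^r_s$: both memberships become explicit scalar equalities ($x^s=\beta^s_{s'}x^{s'}$ versus $x^s=\alpha_{ss'}x^{s'}$), and their equivalence follows from $\beta^s_{s'}\leq\alpha_{ss'}$, the subeigenvector inequality $x^r\geq\alpha_{rs}x^s$, and forcing equality along a maximizing component path. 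This is more quantitative and makes transparent \emph{which} eigenvectors lie in the union (those with a ``tight'' cross-component edge), at the price of extra bookkeeping. Two spots deserve a more careful write-up, though neither is a real gap. First, your sentence ``such an edge $(k_0,q)$ \dots exists iff $x^{s_0}=\beta^{s_0}_{s'}x^{s'}$'' is only the ``only if'' direction for a \emph{fixed} predecessor $k_0$: when $x^{s_0}=\beta^{s_0}_{s'}x^{s'}$ the maximizing edge may leave from a different node of $N_c^{s_0}$, so the stated equivalence is recovered only after you take the union over all $i\in N_c^{s_0}$ (which you do, but the logic of the sentence should be untangled). Second, in the final step you need the maximizing component path realizing $\alpha_{ss'}$ to satisfy $t_1\neq t_0=s$; this holds because under strict visualization any within-component detour has weight at most $1$, so consecutive equal components can be collapsed without decreasing the weight -- worth stating explicitly, as it is also what justifies your identity $\alpha_{rs}=\max\prod_l\beta^{t_l}_{t_{l+1}}$ in the first place.
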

\begin{proof} Assume $\lambda=1$. 

For any $x\in V(A,1)$ we have $x\in\spann(A)$ and $x\in\spann(G_{A,1})$, and if $x$ is positive
then we also have coverings $\cup_{i\in N} M_i(A,x)=N$ and $\cup_{r\in K} M_r(G_{A,1},x)=N$. The claim
of the theorem, reducing to $\exists i\colon x\in\spann(A^{(i)})\Leftrightarrow
\exists s\colon x\in\spann(G_{A,1}^{(s)})$ for positive $x\in V(A,1)$, then amounts to equivalence between the following statements:\\
(a) $\exists i\in N\colon M_i(A,x)\subseteq \bigcup_{j\neq i} M_j(A,x)$ and\\
(b) $\exists s\in K\colon M_s(G_{A,1},x)\subseteq \bigcup_{r\neq s} M_r(G_{A,1},x)$.\\
Let $X=\diag(x)$ and consider the matrix
$\Tilde{A}:=X^{-1} A X$.  
Matrix $G_{\Tilde{A},1}$ then generates the cone $V(\Tilde{A},1)$ which is the
same as $\{y\colon Xy\in V(A,1)\}$, by Lemma~\ref{l:scale-siseig} part (ii). 
Proposition~\ref{p:nicelemma} part (ii) implies that $M_i(A,x)=M_i(\Sat(A,x,1)$ for all $i$, since all nodes are critical and $\crit(A,1)\subseteq\Sat(A,x,1)$.

By Proposition~\ref{p:SSB} $\Tilde{A}=:(\Tilde{a}_{ij})_{i,j=1}^n$ has the property $\Tilde{a}_{ij}\leq 1$  
with the equality if and only if $(i,j)\in\Sat(A,x)$, that is, if and only if $i\in M_j(A,x)$.
In $\Tilde{A}^+$, all columns with indices belonging to the same component
of $\crit(\Tilde{A})$ are equal to each other (Proposition~\ref{p:a+props}).

Denote the entries of $G_{A,1}$ and $G_{\Tilde{A},1}$ by $g_{is}$ and $\Tilde{g}_{is}$ 
respectively.

Assume (a). It means that there exists $i\in N_c^s$ for some $s\in K$ 
such that for each $i'\in M_i(A,x)=M_i(\Sat(A,x))$ there is an index 
$l(i')$ with $l(i')\in N_c^{s'}$ with $s'\neq s$, such that 
$i'\in M_{l(i')}(A,x)=M_{l(i')}(\Sat(A,x))$.
In terms of matrix $\Tilde{A}$ it means that 

\begin{equation}
\label{e:conda}
\Tilde{a}_{i'i}=1\Rightarrow\Tilde{a}_{i'l(i')}=1\ 
\text{with $l(i')\in N_c^{s'}$, $i\in N_c^s$ and $s\neq s'$.}
\end{equation}

We will show that for $s\in K$ as above and for every $e\in N$ such that $\Tilde{g}_{es}=1$ we can find 
$s'$ with $\Tilde{g}_{es'}=1$. Firstly if $\Tilde{g}_{es}=1$ then $\Tilde{a}^+_{ej_s}=1$ and
$\Tilde{a}^+_{ei}=1$ since $i$ and $j_s$ are in the same cycle of $\crit(A)$. This implies that for some $i'$ there is a path $P$ of weight $1$ connecting
$e$ to $i'$, and edge $(i',i)$ of weight $1$, in digraph $\digr(\Tilde{A})$.
By~\eqref{e:conda} there exists index $l(i')$ such that 
$\Tilde{a}_{i'l(i')}=1$ with $l(i')\in N_c^{s'}$ and $s'\neq s$. Concatenating $P$ with edge $(i',l(i'))$ we obtain a path of weight $1$ such that 
$\Tilde{a}^+_{el(i')}=1$ and hence $\Tilde{g}_{es'}=1$ with $s'\neq s$.

Assume (b).  By Proposition~\ref{p:a+props} part (i) we have $\Tilde{g}_{es}=1$ for any $s\in K$ and
$e\in N_c^s$, and statement (b) implies that there also exist $e$ and $r\in K$ such 
that $\Tilde{g}_{er}=1$ and $e\in N_c^s$ with $s\neq r$.  
Moreover, by Proposition~\ref{p:a+props} part (ii) we can also assume that $\Tilde{g}_{is}<1$ for all 
$i\notin N_c^s$.

As $\Tilde{g}_{er}=1$ for all $e\in N_c^s$, we have 
$\Tilde{a}^+_{ej}=1$ for all $e\in N_c^s$ and $j\in N_c^r$. 
This implies that there exist $i_1\in N_c^s$, $i'\in N_c^{s'}$ 
such that $\Tilde{a}_{i_1i'}=1$, where $s\neq s'$. But as $i_1\in N_c^s$,
there also exists $i_2\in N_c^s$ such that $\Tilde{a}_{i_1i_2}=1$.

As $\Tilde{g}_{es}<1$ for all $e\notin N_c^s$, we have
$\tilde{a}^+_{ei_2}<1$ implying 
also that $\tilde{a}_{ei_2}<1$ for all $e\notin N_c^s$. 
Since each component of $\crit(A,1)$ is a cycle, we conclude that $M_{i_2}(\Sat(A,x))=\{i_1\}$, and it is covered by
$M_{i'}(\Sat(A,x))$, implying (a).
\end{proof}

\section{Interval problems}
\label{s:int}

\subsection{Weak $\mbf{X}$-robustness and $\mbf{X}$-simple image eigencone}
In this section we consider an interval extension of weak robustness and
its connection to $\mbf{X}$-simplicity, the main notion studied in this paper.

\begin{definition}[Weak $\mbf{X}$-robustness]
\label{def:weakrob}
 Let $A\in \Rpnn$ and $\mbf{X}\subseteq\Rpn$ be an interval.  
\begin{itemize}
\item[{\rm (i)}] $\Attr(A,\lambda):=\{x\colon A^{\otimes t}\otimes x\in V(A,\lambda)$ for some $t\}$.
\item[{\rm (ii)}] $A$ is called 
weakly $(\mbf{X},\lambda)$-robust  if $\attr(A,\lambda)\cap \mbf{X}\subseteq V(A,\lambda)$.
\end{itemize}
\end{definition}

If $\mbf{X}=\RR^n$, then the notion of weak robustness
can be described in terms of simple image eigenvectors. 
The proof is omitted.

\begin{proposition} \label{Th_WXRminplus}
$A\in\Rpnn$. The following are equivalent:
\begin{enumerate}
\item $A$ is weakly $\lambda$-robust;
\item $|S(A,x)|=1$ for all $x\in V(A,\lambda)$;
\item Each $x\in V(A,\lambda)$ is a simple image eigenvector.
\end{enumerate}
\end{proposition}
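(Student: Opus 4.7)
The plan is to handle the equivalence $(2)\Leftrightarrow(3)$ by definition-chasing, and then to prove $(1)\Leftrightarrow(2)$ directly. Condition (3) says each $x\in V(A,\lambda)$ belongs to the simple image set of $A$, which by the paper's own definition means $|S(A,x)|=1$; this is exactly condition (2). (The qualifier ``eigen'' in ``simple image eigenvector'' merely records that the vector happens to lie in an eigencone; the uniqueness requirement is the same one used in the definition of the simple image set.) So the content of the proposition is the equivalence $(1)\Leftrightarrow(2)$.

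For $(1)\Rightarrow(2)$, I would pick $x\in V(A,\lambda)$ and any $y\in S(A,x)$. Since $A\otimes y=x\in V(A,\lambda)$, the vector $y$ belongs to $\attr(A,\lambda)$ with witness $t=1$. Weak $\lambda$-robustness forces $y\in V(A,\lambda)$, hence $A\otimes y=\lambda y$; comparing with $A\otimes y=x$ yields $y=x/\lambda$. Since $y$ was arbitrary in $S(A,x)$, uniqueness of the solution follows.

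For $(2)\Rightarrow(1)$, I would take $y\in\attr(A,\lambda)$, fix a $t\geq 1$ with $z_t:=A^{\otimes t}\otimes y\in V(A,\lambda)$, and set $z_s:=A^{\otimes s}\otimes y$ for $0\leq s\leq t$. Downward induction on $s$ is the natural tool: the base case $s=t$ is given, and for the inductive step I would observe that both $z_{s-1}$ and $z_s/\lambda$ solve $A\otimes w=z_s$; condition (2) applied to $z_s\in V(A,\lambda)$ then forces $z_{s-1}=z_s/\lambda$, which gives $A\otimes z_{s-1}=\lambda z_{s-1}$, i.e., $z_{s-1}\in V(A,\lambda)$. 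Taking $s=0$ delivers $y\in V(A,\lambda)$ and proves weak $\lambda$-robustness.

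The only technical subtlety I foresee is the edge case $\lambda=0$, where dividing by $\lambda$ is not legitimate. In the spectral setting of this paper, eigenvalues are maximum cycle geometric means of subgraphs of $\digr(A)$, so $\lambda=0$ is a degenerate situation; I would either state the proposition under the implicit assumption $\lambda>0$, or treat $\lambda=0$ separately by a direct zero--nonzero pattern argument using Proposition~\ref{p:support}. Beyond that one point, the proof is a straightforward unwinding of the definitions and does not require any of the deeper results (spectral, saturation, or visualization) developed in the preliminaries.
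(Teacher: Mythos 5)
Your proof is correct and follows essentially the same route as the paper: the paper omits the proof of this proposition, but its intended argument (visible in the proof of Theorem~\ref{WXR}) is exactly your two directions --- any $y\in S(A,x)$ lies in $\attr(A,\lambda)$ with witness $t=1$ for one implication, and the downward induction $A^{\otimes k}\otimes y\in V(A,\lambda)\Rightarrow A^{\otimes(k-1)}\otimes y\in V(A,\lambda)$ for the other, with $(2)\Leftrightarrow(3)$ treated as definitional. Your explicit flagging of the $x/\lambda$ scaling and of the degenerate case $\lambda=0$ (which the paper implicitly excludes, working with $\lambda>0$ elsewhere) is a welcome extra precision.
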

\if{
{\it Proof.} We will only prove the equivalence between the
first two claims (the other equivalence being evident). Suppose that there is $x\in V(A)$ such that $|S(A,x)|>1$ (notice that $|S(A,x)|\geq 1$ for each $x$ because of $x\in V(A)$). Then there is at least one solution $y$ of the system $A\otimes y=x$ and $y\neq x$. Using \thmref{WS} we get
$A\otimes(A\otimes y)=A\otimes x=x$ and $A\otimes y=x\neq y$, this is a contradiction.\\
The converse implication trivially follows.\kd
}\fi

\if{
\begin{definition} Let $A \in \RR(n,n)$ and $\mbf{X}$ (closed or half-closed) be given.
\begin{enumerate}
\item
An eigenvector $x\in V(A)\cap\mbf{X}$ is called an
$\mbf{X}$-simple image eigenvector if $x$ is the unique solution of the
equation $A\otimes y=x$ in interval $\mbf{X}$.
\item
Matrix $A$ is said to have $\mbf{X}$-simple image eigencone if any
$x\in   V(A)\cap\mbf{X}$ is an $\mbf{X}$-simple image eigenvector.
\end{enumerate}
\end{definition}
}\fi

\begin{definition}[Invariance]
 Let $A \in \Rpnn$ and $\mbf{X}\subseteq\Rpn$ be an interval.
We say that $\mbf{X}$ is invariant under $A$ if $x\in \mbf{X}$ implies $A\otimes x\in \mbf{X}.$
\end{definition}

\begin{theorem} \label{WXR}
Let   $A\in\Rpnn$  and $\mbf{X}$ be an interval.
\begin{enumerate}
\item
If $A$ is weakly $(\mbf{X},\lambda)$-robust then
$A$ has $\mbf{X}$-simple image eigencone associated with $\lambda$.
\item
If $A$ has $\mbf{X}$-simple image eigencone associated with $\lambda$ and if
$\mbf{X}$ is invariant under $A$ then $A$ is
weakly $(\mbf{X},\lambda)$-robust.
\end{enumerate}
\end{theorem}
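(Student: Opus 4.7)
The plan is to argue the two implications separately: part~(i) follows directly from the definitions, while part~(ii) requires combining the $\mbf{X}$-simple image property with invariance of $\mbf{X}$ in a more delicate way.

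For part~(i) I would fix $x\in V(A,\lambda)\cap\mbf{X}$ and any $y\in\mbf{X}$ with $A\otimes y=\lambda x$. Since $\lambda x\in V(A,\lambda)$, we have $A\otimes y\in V(A,\lambda)$, so $y\in\attr(A,\lambda)$ (take $t=1$). Weak $(\mbf{X},\lambda)$-robustness then yields $y\in V(A,\lambda)$, and $\lambda y=A\otimes y=\lambda x$ forces $y=x$ for $\lambda>0$, showing that $x$ is the unique solution in $\mbf{X}$.

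For part~(ii) I would argue by contradiction. Given $y\in\attr(A,\lambda)\cap\mbf{X}$, let $t\geq 0$ be the smallest integer with $z_t:=A^{\otimes t}\otimes y\in V(A,\lambda)$, and suppose $t\geq 1$. Invariance of $\mbf{X}$ places the whole orbit $z_0,\ldots,z_t$ in $\mbf{X}$, so $z_t\in V(A,\lambda)\cap\mbf{X}$. The $\mbf{X}$-simple image property applied with $x=z_t$ asserts that $A\otimes y'=\lambda z_t$ has the unique solution $y'=z_t$ in $\mbf{X}$. Since $A\otimes(\lambda z_{t-1})=\lambda(A\otimes z_{t-1})=\lambda z_t$, the vector $\lambda z_{t-1}$ is a second solution; if it lies in $\mbf{X}$, uniqueness forces $\lambda z_{t-1}=z_t$, giving $z_{t-1}=z_t/\lambda\in V(A,\lambda)$ and contradicting the minimality of $t$.

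The main obstacle will be verifying $\lambda z_{t-1}\in\mbf{X}$ componentwise. If $\lambda z_{t-1,i}<\underline{x}_i$ for some~$i$, one forces $\lambda<1$ and $\underline{x}_i>0$; iterated invariance yields $\lambda^{k}z_t\in\mbf{X}$ for every $k\geq 0$, so $\lambda^{k}z_{t,i}\geq\underline{x}_i>0$, whereas $\lambda^{k}z_{t,i}\to 0$ as $k\to\infty$, a contradiction. If $\lambda z_{t-1,i}>\overline{x}_i$ for some~$i$, then $\lambda>1$, $\overline{x}_i<\infty$, and $z_{t-1,i}>0$; invariance forces $z_{t,i}=0$, and then examining the identity $A\otimes(\lambda z_{t-1})=\lambda z_t$ at rows $k$ with $z_{t,k}=0$ forces $a_{ki}=0$ for all such $k$. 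A direct computation of $\gamma_i^*(A,\lambda z_t)$ from its definition now shows $\gamma_i^*(A,\lambda z_t)>0$, and fixing $y'_j=z_{t,j}$ for $j\neq i$ while letting $y'_i$ vary over $\mbf{X}_i\cap[0,\gamma_i^*(A,\lambda z_t)]$ produces a family of solutions of $A\otimes y'=\lambda z_t$ lying in $\mbf{X}$. The $\mbf{X}$-simple image assumption collapses this family to a single point, forcing $\mbf{X}_i=\{0\}$ and hence $z_{t-1,i}=0$, the desired contradiction. This second sub-case is the technical heart of the proof.
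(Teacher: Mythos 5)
Your proof is correct and, at the skeleton level, follows the same route as the paper: part (i) is essentially the paper's one-line argument, and part (ii) descends the orbit $z_t,z_{t-1},\ldots,z_0$ inside $\mbf{X}$ using the uniqueness in $\mbf{X}$ of the solution of $A\otimes y'=\lambda z_t$. The substantive difference is that the paper's own proof of part (ii) simply asserts that the $\mbf{X}$-simple image eigencone property yields ``$A\otimes x\in V(A,\lambda)\Rightarrow x\in V(A,\lambda)$''; this silently treats $z_{t-1}$ as a competing solution of $A\otimes y'=\lambda z_t$, whereas the actual competing solution is $\lambda z_{t-1}$, whose membership in $\mbf{X}$ is not automatic when $\lambda\neq 1$. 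Your two sub-cases close exactly this gap, and both check out: if $\lambda z_{t-1,i}$ falls below $\mbf{X}_i$ then $\lambda<1$ and $\underline{x}_i>0$, while $\lambda^k z_t\in\mbf{X}$ for all $k$ (invariance applied to the eigenvector $z_t$) forces $\lambda^k z_{t,i}\to 0$ against the positive lower bound; if it falls above, then $\lambda>1$, $z_{t,i}=0$, $a_{ki}=0$ whenever $z_{t,k}=0$, hence $\gamma_i^*(A,\lambda z_t)>0$ and the one-parameter family $y'_i=\alpha$ contradicts uniqueness because $\mbf{X}_i$ contains both $0$ and $z_{t-1,i}>0$. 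Two cosmetic remarks. First, since the $\mbf{X}_i$ may be half-open, $\lambda z_{t-1}\notin\mbf{X}$ can also happen with $\lambda z_{t-1,i}=\underline{x}_i\notin\mbf{X}_i$ or $\lambda z_{t-1,i}=\overline{x}_i\notin\mbf{X}_i$; these boundary cases still force $\lambda<1$, respectively $\lambda>1$, and are disposed of by verbatim the same two arguments, so state the dichotomy as ``$\lambda z_{t-1,i}$ lies below/above the interval $\mbf{X}_i$'' rather than strictly below $\underline{x}_i$ or above $\overline{x}_i$. Second, as in the paper, part (i) implicitly requires $\lambda>0$ (which you flag); for $\lambda=0$ the statement degenerates. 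Overall your write-up is more complete on part (ii) than the published proof.
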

\begin{proof}
(i) Suppose  that $A$ is weakly $(\mbf{X},\lambda)$-robust and
$x\in V(A,\lambda)\cap\mbf{X}$. If the system $A\otimes y=\lambda x$ has a solution $y\neq x$ in
$\mbf{X}$, then $y$ is not an eigenvector but belongs to $\attr(A,\lambda)\cap\mbf{X}$,
which contradicts the weak $\mbf{X}$-robustness.

(ii) Assume that $A$ has $\mbf{X}$-simple image eigencone and
$x$ is an arbitrary element of $\attr(A,\lambda)\cap\mbf{X}$.
As $\mbf{X}$ is invariant under $A$, we have that $A^{\otimes k}\otimes x\in\mbf{X}$
for all $k$. Moreover from the definition of 
$\mbf{X}$-simple image eigencone 
we get $A\otimes x\in V(A,\lambda)\Rightarrow x\in V(A,\lambda)$. Then $A^{\otimes k}\otimes x\in
V(A,\lambda)$ for some $k$ implies
$A^{\otimes(k-1)}\otimes x\in V(A,\lambda)$,...,
$x\in V(A,\lambda)$.
\end{proof}

Thus the $\mbf{X}$-simplicity is a necessary condition for
weak $\mbf{X}$-robustness. It is also sufficient if
$\mbf{X}$ is invariant under $A$.

Observe that $A$ is order-preserving $(x\leq y\Rightarrow A\otimes x\leq A\otimes y)$, which is due to
the following arithmetic properties:
\begin{equation*}
\begin{split}
& x\leq y \Rightarrow \alpha x\leq \alpha y\quad \forall\alpha,x,y\in\Rp\\
& \alpha_1\leq \alpha_2,\,\beta_1\leq\beta_2\Rightarrow \alpha_1\oplus\beta_1\leq\alpha_2\oplus\beta_2\quad\forall \alpha_1,\alpha_2,\beta_1,\beta_2\in\Rp.
\end{split}
\end{equation*}

Since $A$ is order-preserving the invariance of $\mbf{X}$ under $A$ admits the following
simple characterization:

\begin{proposition}
\label{p:inv}
Let $\mbf{X}$ be closed. Them $\mbf{X}$ is invariant under $A$ if and only if 
$\underline{x}\leq A\otimes \underline{x}\leq A\otimes \overline{x}\leq\overline{x}$.
\end{proposition}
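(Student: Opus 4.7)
The proof will simply exploit the order-preserving property of $A\otimes(\cdot)$ recorded just before the proposition. The plan is to establish each direction as a short consequence of that monotonicity combined with the fact that, because $\mbf{X}$ is closed, its endpoints $\underline{x}$ and $\overline{x}$ lie in $\mbf{X}$ (with the standard convention $+\infty \leq +\infty$ when some coordinate $\overline{x}_i$ is infinite).

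For the forward implication, I would argue as follows. Suppose $\mbf{X}$ is invariant under $A$. Since $\mbf{X}$ is closed, both $\underline{x}$ and $\overline{x}$ belong to $\mbf{X}$, so by invariance $A\otimes\underline{x}\in\mbf{X}$ and $A\otimes\overline{x}\in\mbf{X}$. By the very definition of $\mbf{X}=\times_{i=1}^n \mbf{X}_i$, membership in $\mbf{X}$ is exactly the componentwise inequality with $\underline{x}$ as a lower bound and $\overline{x}$ as an upper bound, giving $\underline{x}\leq A\otimes\underline{x}$ and $A\otimes\overline{x}\leq\overline{x}$. The middle inequality $A\otimes\underline{x}\leq A\otimes\overline{x}$ then follows from order preservation of the map $y\mapsto A\otimes y$, applied to $\underline{x}\leq\overline{x}$.

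For the reverse implication, suppose $\underline{x}\leq A\otimes\underline{x}\leq A\otimes\overline{x}\leq\overline{x}$ and pick any $x\in\mbf{X}$. Then $\underline{x}\leq x\leq\overline{x}$, and applying order preservation twice yields $A\otimes\underline{x}\leq A\otimes x\leq A\otimes\overline{x}$. Concatenating these bounds with the hypothesis gives
\[
\underline{x}\;\leq\; A\otimes\underline{x}\;\leq\; A\otimes x\;\leq\; A\otimes\overline{x}\;\leq\;\overline{x},
\]
so $A\otimes x\in\mbf{X}$, proving invariance.

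There is no real obstacle here; the only point to be mildly careful about is when some $\overline{x}_i=+\infty$, in which case the inequality $(A\otimes\overline{x})_i\leq \overline{x}_i$ is automatic and the corresponding factor $\mbf{X}_i=[\underline{x}_i,+\infty)$ is still closed in $\Rp$, so the argument goes through unchanged. The proof is essentially a one-line consequence of monotonicity and the cartesian-product structure of $\mbf{X}$.
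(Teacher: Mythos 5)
Your proof is correct and is exactly the argument the paper intends: the paper omits the proof of Proposition~\ref{p:inv}, but the order-preservation remark placed immediately before it exists precisely so that both directions become one-line consequences of monotonicity applied to the endpoints $\underline{x}$ and $\overline{x}$, which is what you do. The only loose point is the forward direction when some $\overline{x}_j=+\infty$: then $\overline{x}\notin\mbf{X}\subseteq\Rpn$, so you cannot literally ``apply invariance to $\overline{x}$''; the clean fix is to take $x\in\mbf{X}$ with $x_j$ arbitrarily large and note that $(A\otimes x)_i\geq a_{ij}x_j\leq\overline{x}_i$ forces $\overline{x}_i=+\infty$ whenever $a_{ij}>0$, which yields $A\otimes\overline{x}\leq\overline{x}$ in the extended sense --- a technicality the paper itself glosses over.
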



\if{
We also remark {\color{blue}{that invariance means $A\otimes\mbf{X}\subseteq \mbf{X}$ for $A\otimes\mbf{X} =\{A\otimes x;\,x\in\mbf{X}\}$}} and if $\mbf{X}$ is more specific (say, closed or open), then the conditions $A\otimes\underline{x}\in\mbf{X}$
and $A\otimes\overline{x}\in\mbf{X}$ can be made more precise than in Proposition~\ref{p:inv}. 
}\fi

\subsection{$\mbf{X}$-simple image of a matrix}

Let us first introduce the following bits of notation:
\begin{definition}
\label{def:oc}
\begin{equation}
c(\underline{x})=\{i\in N\colon\underline{x}_i\in\mbf{X}_i\},\quad o(\underline{x})=N\backslash c(\underline{x})
\end{equation}
\end{definition}
\begin{definition}
\label{def:uparrow}
\begin{equation}
\begin{split}
\mbf{X}^{\uparrow}&=\times_{i=1}^n \mbf{X}_i^{\uparrow},\ \text{where}\\
\mbf{X}_i^{\uparrow}&=\mbf{X}_i\cup[\sup(\mbf{X}_i),+\infty). 
\end{split}
\end{equation}
\end{definition}
We illustrate the definitions by the following example.
\begin{example} 
Suppose that $\mbf{X}=[1,2]\times [3,5)\times (7,9]$ be given. Then we have
$\underline{x}=(1,3,7),\,c(\underline{x})=(1,2)$ and
$\mbf{X}^{\uparrow}=[1,\infty)\times [3,\infty)\times (7,\infty).$
\end{example}
\begin{lemma}
\label{l:X-exist}
Let $A\in\Rpmn$ and $b\in\Rpm$.
A solution to $A\otimes x=b$ in $\mbf{X}$ exists if and only if
\begin{equation}
\label{e:X-exist}
\gamma^*(A,b)\in\mbf{X}^{\uparrow}\ \& \bigcup_{j\colon \gamma_j^*(A,b)\in\mbf{X}_j} M_j(A,b)=\supp(b).
\end{equation}
\end{lemma}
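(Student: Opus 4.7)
The plan is to reduce the statement to Lemma~\ref{l:x*2}, which characterizes $S(A,b)$ as those $x$ satisfying $x\leq\gamma^*(A,b)$ together with the covering $\bigcup_{j\colon x_j=\gamma_j^*(A,b)}M_j(A,b)=\supp(b)$. The only additional ingredient is to account for the constraint $x_j\in\mbf{X}_j$, using the convexity (interval) structure of each $\mbf{X}_j$.

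For the ``only if'' direction, I would take $x\in S(A,b)\cap\mbf{X}$ and verify both conditions of \eqref{e:X-exist} coordinatewise. To show $\gamma_j^*(A,b)\in\mbf{X}_j^{\uparrow}$ for every $j$, I split two cases: if $\gamma_j^*(A,b)\geq\sup\mbf{X}_j$ the conclusion is immediate from the definition of $\mbf{X}_j^{\uparrow}$; if instead $\gamma_j^*(A,b)<\sup\mbf{X}_j$, then by definition of $\sup$ there exists $y\in\mbf{X}_j$ with $y>\gamma_j^*(A,b)$, and since $x_j\in\mbf{X}_j$ satisfies $x_j\leq\gamma_j^*(A,b)\leq y$, convexity of the interval $\mbf{X}_j$ forces $\gamma_j^*(A,b)\in\mbf{X}_j\subseteq\mbf{X}_j^{\uparrow}$. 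The covering part is easy: any $j$ with $x_j=\gamma_j^*(A,b)$ satisfies $\gamma_j^*(A,b)=x_j\in\mbf{X}_j$, so the covering from Lemma~\ref{l:x*2} only uses indices belonging to $\{j\colon\gamma_j^*(A,b)\in\mbf{X}_j\}$.

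For the ``if'' direction, I would explicitly construct a solution in $\mbf{X}$. Define $x$ by setting $x_j=\gamma_j^*(A,b)$ whenever $\gamma_j^*(A,b)\in\mbf{X}_j$, and otherwise by choosing $x_j$ to be any element of the (nonempty) interval $\mbf{X}_j$; in the latter case $\gamma_j^*(A,b)\in\mbf{X}_j^{\uparrow}\setminus\mbf{X}_j$ forces $\gamma_j^*(A,b)\geq\sup\mbf{X}_j\geq x_j$. Hence $x\in\mbf{X}$ and $x\leq\gamma^*(A,b)$. The hypothesis~\eqref{e:X-exist} says precisely that the indices $j$ with $x_j=\gamma_j^*(A,b)$ already cover $\supp(b)$ through the sets $M_j(A,b)$, so Lemma~\ref{l:x*2} yields $x\in S(A,b)$.

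The only delicate point, and therefore the main obstacle, is the case analysis in the ``only if'' direction: because each $\mbf{X}_i$ may be open, closed, half-open, or unbounded above, one must not assume $\sup\mbf{X}_j\in\mbf{X}_j$ nor that $\gamma_j^*(A,b)$ is attained by some $x_j$; the reduction to the convexity of the interval $\mbf{X}_j$ (together with the auxiliary point $y$ close to $\sup\mbf{X}_j$) handles all cases uniformly.
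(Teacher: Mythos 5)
Your proof is correct and follows essentially the same route as the paper: both directions reduce to Lemma~\ref{l:x*2} (equivalently Corollary~\ref{c:ax=b}), with the ``only if'' part using that $x\leq\gamma^*(A,b)$ together with the interval structure of each $\mbf{X}_j$, and the ``if'' part constructing a solution by setting $x_j=\gamma_j^*(A,b)$ exactly on $\{j\colon\gamma_j^*(A,b)\in\mbf{X}_j\}$ and taking $x_j\in\mbf{X}_j$ below $\gamma_j^*(A,b)$ elsewhere. The paper phrases the ``only if'' step contrapositively while you argue it directly via convexity of $\mbf{X}_j$, but the content is identical.
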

\begin{proof}
``Only if''. Assume  that $\mbf{X}$ contains a solution to $A\otimes x=b$, but~\eqref{e:X-exist} does not hold. If $\gamma^*(A,b)\notin\mbf{X}^{\uparrow}$, then since every solution to 
$A\otimes x=b$ satisfies $x\leq \gamma^*(A,b)$, we have $x\notin \mbf{X}^{\uparrow}$ and 
hence $x\notin\mbf{X}$ for every solution.  Let $\gamma^*(A,b)\in\mbf{X}^{\uparrow}$, 
$\bigcup_{j\colon \gamma_j^*(A,b)\in\mbf{X}_j} M_j(A,b)\neq \supp(b)$. Since $x\in\mbf{X}$ we have $x_j<\gamma^*_j(A,b)$ for all $j$ such that $\gamma^*_j(A,b)\notin\mbf{X}_j$. This implies that $A\otimes x\neq b$, a contradiction.

``If''. Assume that~\eqref{e:X-exist} holds. 
Then by Corollary~\ref{c:ax=b} any $x$ with
$x_j=\gamma_j^*(A,b)$
for $j\colon \gamma_j^*(A,b)\in \mbf{X}_j$, and
$x_j\leq \gamma_j^*(A,b)$ 
for $j\colon \gamma_j^*(A,b)\in \mbf{X}_j^{\uparrow}\backslash \mbf{X}_j$
is a solution to $A\otimes x=b$. Hence $A\otimes x=b$ has a solution in $\mbf{X}$.
\end{proof}

\begin{theorem}
\label{t:X-unique}
Let $A\otimes x=b$ have a solution in $\mbf{X}$. That solution is unique in 
$\mbf{X}$ if and only if the following condition is satisfied for each $i$:
\begin{equation}
\label{e:X-unique}
\bigcup_{j\neq i} M_j(A,b)=\supp(b)\Rightarrow [i\in c(\underline{x})\ \&\ \underline{x}_i=\min(\overline{x}_i,\gamma_i^*(A,b))]
\end{equation}
\end{theorem}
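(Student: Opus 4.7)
The proof naturally splits into two implications, each handled through the covering structure furnished by Theorem~\ref{t:ax=b} together with the interval refinement in Lemma~\ref{l:X-exist}.

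For the ``if'' direction (condition implies uniqueness), the plan is a short argument by contradiction. Suppose $x,y\in S(A,b)\cap\mbf{X}$ with $x\neq y$, and take a coordinate $i$ at which they differ, say $x_i<y_i$. Since $y$ is a solution, Lemma~\ref{l:x*2} gives $y_i\leq \gamma_i^*(A,b)$, so $x_i<\gamma_i^*(A,b)$ and hence $i\notin J_x:=\{j\colon x_j=\gamma_j^*(A,b)\}$. The covering condition of Lemma~\ref{l:x*2} applied to $x$ reads $\bigcup_{j\in J_x} M_j(A,b)=\supp(b)$, and since $J_x\subseteq N\setminus\{i\}$ we obtain $\bigcup_{j\neq i} M_j(A,b)=\supp(b)$. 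Invoking the hypothesized implication at $i$ yields $\underline{x}_i\in\mbf{X}_i$ and $\underline{x}_i=\min(\overline{x}_i,\gamma_i^*(A,b))$. But $x_i\in\mbf{X}_i$ together with $x_i\leq \gamma_i^*(A,b)$ forces $x_i\leq \min(\overline{x}_i,\gamma_i^*(A,b))=\underline{x}_i\leq x_i$, so $x_i=\underline{x}_i$; symmetrically $y_i=\underline{x}_i$, contradicting $x_i<y_i$.

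For the ``only if'' direction I prove the contrapositive. Assume some $i$ satisfies $\bigcup_{j\neq i} M_j(A,b)=\supp(b)$ while either $i\notin c(\underline{x})$ or $\underline{x}_i<\min(\overline{x}_i,\gamma_i^*(A,b))$. First I will produce two distinct admissible values $v_1,v_2\in \mbf{X}_i\cap [0,\gamma_i^*(A,b)]$: either $\mbf{X}_i$ is open at its lower end, supplying a whole range inside $\mbf{X}_i$ above $\underline{x}_i$, or $\underline{x}_i\in \mbf{X}_i$ lies strictly below $\min(\overline{x}_i,\gamma_i^*(A,b))$, giving two concrete choices. Next, use Theorem~\ref{t:ax=b} to fix a minimal $N''\subseteq N\setminus\{i\}$ with $\bigcup_{j\in N''} M_j(A,b)=\supp(b)$, and combine this with the $\mbf{X}$-existence covering $\bigcup_{j\in \tilde{J}} M_j(A,b)=\supp(b)$ from Lemma~\ref{l:X-exist} (where $\tilde{J}=\{j\colon \gamma_j^*(A,b)\in \mbf{X}_j\}$) so that $N''$ can be chosen inside $\tilde{J}$. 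Then the vectors $y^{(k)}$ defined by $y^{(k)}_j=\gamma_j^*(A,b)$ for $j\in N''$, $y^{(k)}_i=v_k$, and any admissible $y^{(k)}_j\in\mbf{X}_j\cap [0,\gamma_j^*(A,b)]$ for the remaining coordinates (non-empty since $\gamma^*(A,b)\in\mbf{X}^{\uparrow}$ forces $\gamma_j^*(A,b)\geq \underline{x}_j$) lie in $S_{N''}(A,b)\cap\mbf{X}$ by Theorem~\ref{t:ax=b}, and they satisfy $y^{(1)}\neq y^{(2)}$.

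The principal technical point I anticipate in the second direction is bridging the column-level removability $\bigcup_{j\neq i} M_j(A,b)=\supp(b)$ with the $\mbf{X}$-admissible covering of Lemma~\ref{l:X-exist}: one needs a minimal covering inside $N\setminus\{i\}$ whose members also lie in $\tilde{J}$. My plan is to start from the minimal covering $J_{x^*}\subseteq \tilde{J}$ supplied by some existing solution $x^*\in S(A,b)\cap \mbf{X}$, and to swap the index $i$ out of $J_{x^*}$ when $i\in J_{x^*}$, using the removability of $i$ and the minimal-subset exchange implicit in Theorem~\ref{t:ax=b}. Once such an admissible covering avoiding $i$ is secured, the construction of $y^{(1)}$ and $y^{(2)}$ above concludes the proof.
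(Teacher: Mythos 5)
Your ``if'' direction is correct and is in fact tighter than the paper's own argument: by locating a coordinate where two solutions in $\mbf{X}$ differ and showing that hypothesis~\eqref{e:X-unique} pins that coordinate at $\underline{x}_i$ for both of them, you avoid the paper's detour through minimal covering sets $N'$ and the claim that all $N'$-solutions in $\mbf{X}$ coincide.

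The ``only if'' direction, however, has a genuine gap, and it sits exactly at the step you flagged. The covering $N''\subseteq\tilde{J}\setminus\{i\}$, where $\tilde{J}=\{j\colon\gamma_j^*(A,b)\in\mbf{X}_j\}$, need not exist, and the proposed swap cannot produce it: the only columns able to absorb $M_i(A,b)$ may all lie outside $\tilde{J}$. Concretely, take $A=\bigl(\begin{smallmatrix}1&1\\1&1\end{smallmatrix}\bigr)$, $b=(1,1)^{T}$ and $\mbf{X}=[1/2,\,1]\times[9/10,\,9/10]$. Then $\gamma^*(A,b)=(1,1)$ and $M_1(A,b)=M_2(A,b)=\supp(b)=\{1,2\}$, so $\bigcup_{j\neq 1}M_j(A,b)=\supp(b)$, $1\in c(\underline{x})$ and $\underline{x}_1=1/2<1=\min(\overline{x}_1,\gamma_1^*(A,b))$: the premise of your contrapositive holds at $i=1$. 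But $\tilde{J}=\{1\}$, so $\tilde{J}\setminus\{1\}$ covers nothing, and indeed every solution in $\mbf{X}$ has $x_1=1$ because $x_2\leq 9/10<1$; no two solutions differing in coordinate $1$ exist. Worse, the solution in $\mbf{X}$ here is the single vector $(1,9/10)$, so the implication ``condition fails at some $i$ $\Rightarrow$ non-uniqueness'' is false as literally stated, and the gap cannot be closed by a cleverer construction. (The paper's own proof makes the same unjustified step: it applies~\eqref{e:alpha1} to indices $i$ with $\gamma_i^*(A,b)\in\mbf{X}_i$, whereas Corollary~\ref{c:ax=b} only delivers a solution in $S(A,b)$, not one lying in $\mbf{X}$; for $i\notin\tilde{J}$ this is harmless because any admissible covering automatically avoids $i$, but for $i\in\tilde{J}$ it is not.) Your construction, including the choice of two admissible values $v_1\neq v_2$, goes through verbatim once the premise of~\eqref{e:X-unique} is strengthened to $\bigcup_{j\neq i,\;\gamma_j^*(A,b)\in\mbf{X}_j}M_j(A,b)=\supp(b)$, and your ``if'' argument survives that change as well, since $J_x\subseteq\tilde{J}$ holds automatically for $x\in\mbf{X}$; so the statement needs amending rather than your argument needing a new idea.
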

\begin{proof}
As $A\otimes x=b$ has a solution in $\mbf{X}$, condition~\eqref{e:X-exist} holds, and by Corollary~\ref{c:ax=b}
we have
\begin{equation}
\label{e:alpha1}
\forall \alpha\in \mbf{X}_i\  \text{s.t.}\  \alpha\leq \gamma_i^*(A,b)\ \exists x\in S(A,b)\cap\mbf{X}\colon x_i=\alpha.
\end{equation}

In particular if $\gamma_i^*(A,b)\in\mbf{X_i}^{\uparrow}\backslash\mbf{X}_i$ then
\begin{equation}
\label{e:alpha}
\forall \alpha\in \mbf{X}_i\  \exists x\in S(A,b)\cap\mbf{X}\colon x_i=\alpha.
\end{equation}

``Only if'':  Suppose that there is a unique solution belonging to $\mbf{X}$. 
By~\eqref{e:alpha} if $\mbf{X}_i$ does not reduce to one point, then $S(A,b)$ also contains more than one vector.
Thus, $\underline{x}_i=\overline{x}_i$ for all $i$ such that $\gamma_i^*(A,b)\in\mbf{X_i}^{\uparrow}\backslash\mbf{X}_i$, which satisfies~\eqref{e:X-unique}.


If we assume that~\eqref{e:X-unique} does not hold for some $i$ with 
$\gamma_i^*(A,b)\in\mbf{X_i}$ then 
$\bigcup_{j\neq i} M_j(A,b)=\supp b$ 
and~\eqref{e:alpha} implies that the solution is non-unique since the interval 
$\mbf{X}_i\cap\{\alpha\colon \alpha\leq \gamma_i^*(A,b)\}$ contains more than one point.

``If'':  Assume that~\eqref{e:X-unique} holds and, by contradiction,
that there is more than one solution to $A\otimes x=b$ belonging to $\mbf{X}$.

Since the solution is non-unique, it follows that there exists a proper subset $N'$ of $N$
such that $\cup_{j\in N'} M_j(A,b)= \supp(b)$. Assume that $N'$ is a minimal such subset, with respect to inclusion. 

We have $\cup_{j\neq i}  M_j(A,b)= \supp(b)$ for all
$i\in N\backslash N'$. By~\eqref{e:X-unique} $N\backslash N'\subseteq c(\underline{x})$, and $\underline{x}_i=\min(\overline{x}_i,\gamma_i^*(A,b))$ for all $i\in N\backslash N'$. This condition implies that there exists only one $N'$-solution to $A\otimes x=b$ belonging to $\mbf{X}$, and it has coordinates
\begin{equation*}
x_i=
\begin{cases}
\underline{x}_i, &\text{if $\underline{x}_i=\overline{x}_i$},\\
\gamma_i^*(A,b), &\text{otherwise}.
\end{cases}
\end{equation*} 

As this solution is the same for any minimal subset $N'$, system $A\otimes x=b$ has a unique solution,
contradicting the non-uniqueness of it.
 \end{proof}

\begin{corollary}
\label{c:si-eig}
Let $x\in V(A)\cap\mbf{X}$. Then $x$ is an $\mbf{X}$-simple image eigenvector
if and only if
\begin{equation}
\label{e:si-eig1}
\bigcup_{j\neq i} M_j(A,b)=\supp(x)\Rightarrow [i\in c(\underline{x})\ \&\ \underline{x}_i=\min(\overline{x}_i,\gamma_i^*(A,b))]
\end{equation}
If $N_c=N$ then this condition can be replaced with the following one:
\begin{equation}
\label{e:si-eig2}
\bigcup_{j\neq i} M_j(A,b)=\supp(x)\Rightarrow [i\in c(\underline{x})\ \&\ \underline{x}_i=\min(\overline{x}_i,x_i)]
\end{equation}
\end{corollary}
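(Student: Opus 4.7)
The plan is to deduce the corollary from Theorem~\ref{t:X-unique} applied with $b=\lambda x$. Since $x\in V(A,\lambda)\cap\mbf{X}$ satisfies $A\otimes x=\lambda x$ and lies in $\mbf{X}$, it is itself a solution in $\mbf{X}$ of $A\otimes y=\lambda x$, so the hypothesis of that theorem is met, and by the definition of $\mbf{X}$-simple image eigenvector, uniqueness of this solution in $\mbf{X}$ is exactly what is needed. Substituting $b=\lambda x$ in~\eqref{e:X-unique}, and noting $\supp(\lambda x)=\supp(x)$, yields~\eqref{e:si-eig1}, giving the first equivalence.

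For the second equivalence, the plan is to show that $N_c=N$ forces $\gamma_i^*(A,\lambda x)=x_i$ for every $i\in N$, so that~\eqref{e:si-eig1} reduces directly to~\eqref{e:si-eig2} by a term-by-term substitution. This splits into two cases. If $i\in\supp(x)$, then $N_c(A,\lambda)=N$ together with Proposition~\ref{p:critAxl}(ii)--(iii) places $i$ inside a strongly connected component of $\crit(A,x,\lambda)$; this component supplies an incoming critical edge at $i$, which is also a saturation edge by Proposition~\ref{p:satgraph}(iii), and Proposition~\ref{p:nicelemma}(ii) then gives $\gamma_i^*(A,\lambda x)=x_i$.

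If $i\notin\supp(x)$, the plan is to exploit $N_c=N$ to put $i$ on a critical cycle of $\crit(A,\lambda)$ whose entire strongly connected component is disjoint from $\supp(x)$ by Proposition~\ref{p:critAxl}(iii). The predecessor $m$ of $i$ along that cycle then satisfies $a_{mi}>0$ and $x_m=0$, so the minimum defining $\gamma_i^*(A,\lambda x)$ contains the term $\lambda x_m a_{mi}^{-1}=0$, forcing $\gamma_i^*(A,\lambda x)=0=x_i$. Combining the two cases, the substitution is justified and~\eqref{e:si-eig2} follows. I expect the treatment of indices $i\notin\supp(x)$ to be the main technical point, as it relies on the graph-theoretic relation between $\crit(A,\lambda)$ and $\crit(A,x,\lambda)$ rather than a direct application of Proposition~\ref{p:nicelemma}.
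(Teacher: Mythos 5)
Your proposal is correct and follows essentially the same route as the paper: apply Theorem~\ref{t:X-unique} to the system $A\otimes y=\lambda x$, noting that $x$ itself is a solution in $\mbf{X}$ and that $\mbf{X}$-simplicity of $x$ is precisely uniqueness of that solution in $\mbf{X}$. The only difference is that you spell out (via Propositions~\ref{p:support}, \ref{p:critAxl}, \ref{p:satgraph} and~\ref{p:nicelemma}, splitting on $i\in\supp(x)$ versus $i\notin\supp(x)$) the claim that $N_c=N$ forces $\gamma_i^*(A,\lambda x)=x_i$ for all $i$, which the paper simply asserts.
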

\begin{proof}
As $x\in V(A)\cap\mbf{X}$, system $A\otimes y=x$ has a solution in $\mbf{X}$, which is $y=x$.
So we can apply Theorem~\ref{t:X-unique} yielding~\eqref{e:si-eig1} for the 
uniqueness of this solution. Further if all nodes are critical, then $\gamma_i^*(A,x)=x_i$ for all 
$x$, so~\eqref{e:si-eig1} gets replaced with~\eqref{e:si-eig2}.
\end{proof}

\subsection{Characterizing matrices with $\mbf{X}$-simple image eigencone}

We begin with the following definition and key lemma of geometric kind.

\begin{definition}
	\label{def:openclosed}
	An interval $\mbf{X}\subseteq\Rpn$ is called $\underline{x}$-open if $o(\underline{x})=N$. 
	It is called $\overline{x}$-closed if it $\overline{x}\in\mbf{X}$.
\end{definition}

\begin{lemma} 
\label{l:geometric}
Let $\mbf{X}$ be an $\overline{x}$-closed interval and let $A\in \R^{m\times n}$. 
Then $\mbf{X}\cap\spann(A)\neq\emptyset$ if and only if 
$P_A \overline{x}\in\mbf{X}$.
\end{lemma}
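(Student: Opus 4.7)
The plan is to prove the two implications separately. The reverse direction is essentially immediate from the definition of the projector, while the forward direction requires a short sandwich argument combined with a case analysis at the lower endpoints of the coordinate intervals $\mbf{X}_i$.

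For the ``if'' direction, observe that $P_A \overline{x}\in\spann(A)$ holds automatically by Definition~\ref{def:proj} (it is the componentwise maximum of elements of the closed max cone $\spann(A)$ dominated by $\overline{x}$). Hence if in addition $P_A\overline{x}\in\mbf{X}$, then $P_A\overline{x}$ itself witnesses $\mbf{X}\cap\spann(A)\neq\emptyset$.

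For the ``only if'' direction, pick an arbitrary $y\in\mbf{X}\cap\spann(A)$. From $y\in\mbf{X}$ we get $y\leq\overline{x}$, and combined with $y\in\spann(A)$ the defining maximality property~\eqref{e:projector} yields the chain
\begin{equation*}
y\leq P_A\overline{x}\leq\overline{x}.
\end{equation*}
The remaining task is to check componentwise that $(P_A\overline{x})_i\in\mbf{X}_i$ for every $i$. At the upper endpoint this is handled directly by the $\overline{x}$-closedness assumption, which guarantees $\overline{x}_i\in\mbf{X}_i$, so no value $\leq\overline{x}_i$ is excluded from $\mbf{X}_i$ on the top side. At the lower endpoint I would split on whether $\underline{x}_i\in\mbf{X}_i$: in the closed case, $y_i\in\mbf{X}_i$ gives $y_i\geq\underline{x}_i$, whence $(P_A\overline{x})_i\geq y_i\geq\underline{x}_i$; in the open case, $y_i>\underline{x}_i$ forces $(P_A\overline{x})_i>\underline{x}_i$. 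Either way $(P_A\overline{x})_i\in\mbf{X}_i$, so $P_A\overline{x}\in\mbf{X}$.

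The ``hard'' part is really just the case split at the lower endpoints of $\mbf{X}_i$; the $\overline{x}$-closedness hypothesis is precisely what is needed to avoid the symmetric concern at the upper endpoints, where one cannot produce a similar lower bound on $(P_A\overline{x})_i$ other than $\overline{x}_i$ itself. Without this hypothesis the statement would fail whenever $\spann(A)$ meets $\mbf{X}$ only through points whose projection-maximum ``escapes'' through the open top face.
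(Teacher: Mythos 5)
Your proof is correct and follows essentially the same route as the paper's: both hinge on sandwiching $P_A\overline{x}$ between a point $y\in\mbf{X}\cap\spann(A)$ and $\overline{x}\in\mbf{X}$ and then invoking the order-convexity of the box $\mbf{X}$. The only (cosmetic) difference is that you obtain $y\leq P_A\overline{x}$ directly from the maximality in Definition~\ref{def:proj}, whereas the paper reaches the same conclusion slightly more indirectly by forming $z=y\oplus P_A\overline{x}$ and showing $z=P_A\overline{x}$.
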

\begin{proof}
If $P_A \overline{x}\in\mbf{X}$ then $\mbf{X}\cap\spann(A)\neq\emptyset$ (since $P_A\overline{x}\in\spann(A)$).


If $\mbf{X}\cap\spann(A)\neq \emptyset$, take $y\in \mbf{X}\cap\spann(A)\neq \emptyset$.
Vector $z=y\oplus P_A \overline{x}$ belongs to $\spann(A)$ and satisfies $y\leq z\leq\overline{x}$.
It follows then that $z\in\mbf{X}$. However, $P_A\overline{x}\leq z\leq\overline{x}$
while $P_A\overline{x}$ is the greatest vector of $\spann(A)$ bounded from above by $\overline{x}$.
This implies that $z=P_A\overline{x}$ and $P_A\overline{x}\in\mbf{X}$.
\end{proof}
\begin{definition}
\label{def:xli}
For any $l,i\in N$, let
\begin{equation}
\label{e:xl}
\overline{x}^{\langle l\rangle}_i=
\begin{cases}
\underline{x}_l, &\text{if $i=l$},\\
\overline{x}_i, &\text{otherwise},
\end{cases}
\end{equation}
and let $\overline{x}^{\langle l\rangle}=(\overline{x}^{\langle l\rangle}_i)_{i\in N}$.
\end{definition}
\begin{definition}
\label{def:ints}
For any $l\in N$, let
\begin{equation}
\label{e:XAl}
\begin{split}
\mbf{X}_{A,\lambda}^l&=\{x\in\mbf{X}\colon x_l=\underline{x}_l,\quad 
\lambda x_j>a_{jl}\underline{x}_l\;\forall j\},\\
\mbf{X}^{(l}&=\{x\in\mbf{X}\colon x_l>\underline{x}_l\}.
\end{split}
\end{equation}
\end{definition}

Observe that if $\mbf{X}$ is $\overline{x}$-closed then $\mbf{X}_A^l$ is $\overline{x}^{\langle l\rangle}$-closed
(if it is nonempty) for every $l$. Also note that $a_{ll}<\lambda$ is a necessary condition for
$\mbf{X}_{A,\lambda}^l$ to be non-empty.

\begin{theorem}
\label{t:mainres}
Let $\lambda>0$ and $V(A,\lambda)\cap\mbf{X}\neq\emptyset$.
\begin{itemize}
\item[{\rm (i)}] $A$ has $\mbf{X}$-simple image eigencone corresponding to $\lambda$ if and only if 
\begin{equation}
\label{e:mainres1}
\begin{split}
& \spann(A^{(i)})\cap V(A,\lambda)\cap\mbf{X}^{(i}=\emptyset\quad\forall i,\\
& V(A,\lambda)\cap\mbf{X}_{A,\lambda}^l=\emptyset\quad\forall l\in c(\underline{x})\backslash N_c(A,\lambda)\ 
\text{s.t.}\ \underline{x}_l<\overline{x}_l.
\end{split}
\end{equation}
\item[{\rm (ii)}] If $\mbf{X}$ is $\overline{x}$-closed then~\eqref{e:mainres1} is equivalent to
\begin{equation}
\label{e:mainres2}
\begin{split}
& P_{\cW^i} \overline{x}\notin\mbf{X}^{(i}\ \forall i\in N\;\text{where}\;
 \cW^{(i)}=\spann(A^{(i)})\cap V(A,\lambda),\\
& P_{V(A,\lambda)} \overline{x}^{\langle l\rangle}\notin \mbf{X}_{A,\lambda}^l,\;\forall l\in c(\underline{x})
\backslash N_c(A,\lambda)\;\text{s.t.}\; \underline{x}_l<\overline{x}_l.
\end{split}
\end{equation}
\end{itemize}
\end{theorem}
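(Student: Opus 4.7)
The plan for part (i) is to apply Corollary~\ref{c:si-eig} pointwise to every $x\in V(A,\lambda)\cap\mbf{X}$, and then convert the combinatorial covering $\bigcup_{j\neq i}M_j(A,\lambda x)=\supp(x)$ into the geometric statement $x\in\spann(A^{(i)})$ via Lemma~\ref{l:sis} (the equivalence uses $\lambda>0$ so that $\supp(\lambda x)=\supp(x)$). The natural dichotomy is whether $x_i>\underline{x}_i$ (equivalently $x\in\mbf{X}^{(i}$) or $x_i=\underline{x}_i$ (which forces $i\in c(\underline{x})$).

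In the first sub-case, $x\in V(A,\lambda)$ implies $x\leq\gamma^*(A,\lambda x)$, so the strict inequality $\underline{x}_i<x_i\leq\min(\overline{x}_i,\gamma_i^*(A,\lambda x))$ is automatic; failure of the condition in Corollary~\ref{c:si-eig} at such $(x,i)$ is thus exactly non-emptiness of $\spann(A^{(i)})\cap V(A,\lambda)\cap\mbf{X}^{(i}$, producing the first part of~\eqref{e:mainres1}. In the second sub-case, failure requires $\gamma_i^*(A,\lambda x)>\underline{x}_i=x_i$, which is equivalent to $i$ having no incoming edge in $\Sat(A,x,\lambda)$. By Propositions~\ref{p:satgraph}(iii) and~\ref{p:critAxl}(iii) this forces $i\notin N_c(A,\lambda)$: if $i$ belonged to a critical s.c.c.\ meeting $\supp(x)$ then the whole component would sit inside $\crit(A,x,\lambda)\subseteq\Sat(A,x,\lambda)$ and supply an incoming edge, while if it sat in a critical s.c.c.\ disjoint from $\supp(x)$ some in-neighbour $k$ on that component would satisfy $x_k=0$ and force $\gamma_i^*(A,\lambda x)=0=x_i$. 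The same absence of incoming edges also makes $x\in\spann(A^{(i)})$ automatic via Proposition~\ref{p:existence}, so failure in this sub-case reduces exactly to $x\in V(A,\lambda)\cap\mbf{X}_{A,\lambda}^i$ with $i\in c(\underline{x})\setminus N_c(A,\lambda)$ and $\underline{x}_i<\overline{x}_i$, giving the second part of~\eqref{e:mainres1}.

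For part (ii), both $\cW^i=\spann(A^{(i)})\cap V(A,\lambda)$ and $V(A,\lambda)$ are closed max cones, so the projection technique behind Lemma~\ref{l:geometric} applies. Given $y\in\cW^i\cap\mbf{X}^{(i}$, the vector $z=y\oplus P_{\cW^i}\overline{x}$ lies in $\cW^i$ and satisfies $\underline{x}\leq z\leq\overline{x}$; coordinate-wise $z\in\mbf{X}$ because $z_k=\overline{x}_k$ uses $\overline{x}$-closedness and $z_k=\underline{x}_k$ forces $y_k=\underline{x}_k\in\mbf{X}_k$. Since $z_i\geq y_i>\underline{x}_i$, maximality of $P_{\cW^i}\overline{x}$ forces $z=P_{\cW^i}\overline{x}\in\mbf{X}^{(i}$, which is the first equivalence. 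For the second, any $x\in V(A,\lambda)\cap\mbf{X}_{A,\lambda}^l$ satisfies $x\leq\overline{x}^{\langle l\rangle}$ (since $x_l=\underline{x}_l$), hence $x\leq y:=P_{V(A,\lambda)}\overline{x}^{\langle l\rangle}$; the containment $y\in\mbf{X}_{A,\lambda}^l$ then follows from $y_l=\underline{x}_l\in\mbf{X}_l$ (using $l\in c(\underline{x})$), from $y_k\in[x_k,\overline{x}_k]\subseteq\mbf{X}_k$ (using convexity of $\mbf{X}_k$ together with $\overline{x}$-closedness), and from $\lambda y_j\geq\lambda x_j>a_{jl}\underline{x}_l$ (inherited from $x$). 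The main obstacle throughout is the careful bookkeeping at the boundaries of each $\mbf{X}_k$ (mediated by $c(\underline{x})$ and $o(\underline{x})$), together with the correct identification of the critical-graph condition $i\notin N_c(A,\lambda)$ as the restriction that separates the two halves of~\eqref{e:mainres1}.
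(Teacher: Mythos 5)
Your argument is correct and follows the paper's own route: part (i) is the pointwise application of Corollary~\ref{c:si-eig}, split according to whether $x_i>\underline{x}_i$ or $x_i=\underline{x}_i$, with the covering condition $\bigcup_{j\neq i}M_j(A,\lambda x)=\supp(x)$ translated into membership in $\spann(A^{(i)})$, and part (ii) is exactly the projector argument of Lemma~\ref{l:geometric} applied to the closed max cones $\cW^i$ and $V(A,\lambda)$ with the $\overline{x}$-closed (respectively $\overline{x}^{\langle l\rangle}$-closed) intervals $\mbf{X}^{(i}$ and $\mbf{X}_{A,\lambda}^l$. The only substantive difference is that you spell out, via Propositions~\ref{p:satgraph} and~\ref{p:critAxl}, why the second case forces $l\notin N_c(A,\lambda)$, a step the paper merely asserts.
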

\begin{proof} Assume without loss of generality that $\lambda=1$.

(i): Let $x\in V(A,1)\cap\mbf{X}$. In general, $x\leq \gamma^*(A, x)$. More precisely,
for $\gamma_i^*(A, x)$ we may have $\gamma_i^*(A, x)=x_i$ (for all $i\in N_c(A,1)$ and some other nodes),
or $\gamma_i^*(A, x)>x_i$ (for at least one node in $N\backslash N_c(A,1)$ if 
$N\neq N_c(A,1)$).

``Only if'': Let us show that the conditions~\eqref{e:mainres1} are necessary. For this assume by contradiction
that one of these conditions is violated but $A$ has $\mbf{X}$-simple image eigencone.

(a) If the first condition does not hold then take $x'\in\mbf{X}^{(i}\cap V(A,1)\cap \spann(A^{(i)})$. Then $x'$ 
satisfies $x'_i>\underline{x}_i$ and also
$\cup_{j\neq i} M_j(A,x')=\supp(x')$, hence
$x'$ is not an $\mbf{X}$-simple image eigenvector by Corollary~\ref{c:si-eig}: either 
$i\in o(\underline{x})$ and $\cup_{j\neq i} M_j(A,x')=\supp(x')$, or $i\in c(\underline{x})$,
$\cup_{j\neq i} M_j(A,x')=\supp(x')$ and $\underline{x}_i<\min (\gamma_i^*(A,x'), \overline{x}_i)$.

(b) If the second condition does not hold then take 
$x''\in V(A,1)\cap\mbf{X}_A^l$ for some $l\in c(\underline{x})\backslash N_c(A,1)$ 
such that $\underline{x}_l<\overline{x}_l$. We have $x''_l=\underline{x}_l,$ $\underline{x}_l<\overline{x}_l$ and 
$\gamma_l^*(A,x'')>x''_l$ (equivalent with the condition 
$x''_j>a_{jl} x''_l$ $\forall j$ from~\eqref{e:XAl}). 
The inequality $\gamma_l^*(A,x'')>x''_l$ implies that $\cup_{j\neq l} M_j(A,x)=\supp(x'')$, 
and we also have $\underline{x}_l<\min (\gamma_l^*(A,x''), \overline{x}_l)$ and $l\in c(\underline{x})$.
By Corollary~\ref{c:si-eig}, this shows that $x$ is not an $\mbf{X}$-simple image eigenvector.

``If'':  By contradiction, suppose that the conditions hold but $A$ does not have simple image eigencone. Let
$x$ be an $\mbf{X}$-simple image eigenvector. Then either 
$x_i>\underline{x}_i$ and $\cup_{j\neq i} M_j(A,x)=\supp(x)$ for some $i$, which 
implies $x\in\mbf{X}^{(i}\cap V(A)\cap \spann(A^{(i)})$, or 
$x_l=\underline{x}_l$, $\cup_{j\neq l} M_j(A,x)=\supp(x)$ and $\underline{x}_l<\min (\gamma_l^*(A,x), \overline{x}_l)$  for some $l\in c(\underline{x})$. In this case necessarily $\underline{x}_l<\overline{x}_l$ and $x_l<\gamma_l^*(A,x)$, which 
is only possible for $l\in N\backslash N_c$.
 
This shows the sufficiency of~\eqref{e:mainres1}.

(ii) By Lemma~\ref{l:geometric}, $\cW^i\cap\mbf{X}^{(i}\neq\emptyset$ 
if and only if $P_{\cW^i}\overline{x}\in\mbf{X}^{(i}$, and $V(A,1)\cap\mbf{X}_A^{l}\neq \emptyset$
if and only if $P_{V(A,1)}\overline{x}^{\langle l\rangle}\in \mbf{X}_A^{l}$.
\end{proof}


In general, the basis of $\spann(A^{(i)})\cap V(A,\lambda)$ can be computed algorithmically, using the 
method of Butkovi\v{c}, Hegedus~\cite{BH} or the more recent and efficient methods of Allamigeon et al.~\cite{All+}

Let us now examine the case when $\mbf{X}$ is $\underline{x}$-open.

\begin{theorem}
\label{t:openclosed}
Let $\mbf{X}$ be an $\underline{x}$-open interval and $V(A,\lambda)\cap\mbf{X}$ be non-empty.
\begin{itemize}
\item[{\rm (i)}] $A$ has $\mbf{X}$-simple image eigencone corresponding to $\lambda$ 
if and only if
\begin{equation}
\label{e:openclosed1}
\spann(A^{(i)})\cap V(A,\lambda)\cap \mbf{X}=\emptyset\quad\forall i=1,\ldots,n.
\end{equation}
\item[{\rm (ii)}] In that case $N_c(A,\lambda)=N$ and 
$\crit(A,\lambda)$ consists of disjoint cycles $c_1,\ldots,c_k$ for some $k$.
\item[{\rm (iii)}] Condition~\eqref{e:openclosed1} is equivalent to
\begin{equation}
\label{e:openclosed2}
\spann(G_{A^{(s)},\lambda})\cap\mbf{X}=\emptyset\quad\forall s=1,\ldots,k.
\end{equation}
\item[{\rm (iv)}] If $\mbf{X}$ is also $\overline{x}$-closed then~\eqref{e:openclosed1} 
is also equivalent to
\begin{equation}
\label{e:openclosed4}
P_{G_{A^{(s)},\lambda}} \overline{x}\not>\underline{x}\quad\forall s=1,\ldots,k.
\end{equation}

\end{itemize}
\end{theorem}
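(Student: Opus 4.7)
I plan to derive all four parts by specialising the general characterisation of Theorem~\ref{t:mainres} to the $\underline{x}$-open case, and then invoking Theorem~\ref{t:peter}, Theorem~\ref{t:case2} and Lemma~\ref{l:geometric} in turn. Throughout I read the notation $G_{A^{(s)},\lambda}$ in~\eqref{e:openclosed2}--\eqref{e:openclosed4} as $G_{A,\lambda}^{(s)}$, i.e.\ the matrix obtained from $G_{A,\lambda}$ by deleting its $s$-th column, consistently with the notation of Theorem~\ref{t:case2}.

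For part (i), I observe two simplifications specific to the $\underline{x}$-open case: first, $c(\underline{x})=\emptyset$, so the second line of~\eqref{e:mainres1} is vacuous; second, $\underline{x}_i\notin\mbf{X}_i$ for every $i$, so $x_i>\underline{x}_i$ holds throughout $\mbf{X}$, giving $\mbf{X}^{(i}=\mbf{X}$. Thus~\eqref{e:mainres1} collapses to~\eqref{e:openclosed1} and Theorem~\ref{t:mainres}(i) yields (i). For (ii), every $x\in\mbf{X}$ satisfies $x_i>\underline{x}_i\geq 0$ and is therefore strictly positive; picking $x\in V(A,\lambda)\cap\mbf{X}\neq\emptyset$, condition~\eqref{e:openclosed1} combined with Lemma~\ref{l:sis} forces $x$ to be a simple image eigenvector, whereupon Theorem~\ref{t:peter} applied with $N'=\supp(x)=N$ forces $N_c(A,\lambda)=N$ and that every component of $\crit(A,\lambda)$ is a cycle.

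For part (iii), (ii) puts us in the hypotheses of Theorem~\ref{t:case2}, which gives
\begin{equation*}
\Bigl(\bigcup_{i\in N}\spann(A^{(i)})\Bigr)\cap V^+(A,\lambda)=\bigcup_{s=1}^{k}\spann^+(G_{A,\lambda}^{(s)}).
\end{equation*}
Since $\mbf{X}$ lies entirely in the strictly positive orthant, intersecting both sides with $\mbf{X}$ preserves equality and yields~\eqref{e:openclosed1}$\Leftrightarrow$\eqref{e:openclosed2}. For (iv), the additional $\overline{x}$-closedness makes each $\mbf{X}_i=(\underline{x}_i,\overline{x}_i]$; Lemma~\ref{l:geometric} applied to the closed max cone $\spann(G_{A,\lambda}^{(s)})$ then gives $\spann(G_{A,\lambda}^{(s)})\cap\mbf{X}\neq\emptyset\Leftrightarrow P_{G_{A,\lambda}^{(s)}}\overline{x}\in\mbf{X}$, and since $P_{G_{A,\lambda}^{(s)}}\overline{x}\leq\overline{x}$ automatically, this condition amounts to $P_{G_{A,\lambda}^{(s)}}\overline{x}>\underline{x}$. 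Combined with (iii), this delivers (iv).

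The one step deserving attention is (iii): the positive-eigenvector identity of Theorem~\ref{t:case2} must be transferred cleanly to an equality of intersections with $\mbf{X}$, and this hinges entirely on the fact that $\underline{x}$-openness of $\mbf{X}$ makes every element of $\mbf{X}$ strictly positive, so the $+$ superscripts on $V^+$ and $\spann^+$ can be dropped after intersection with $\mbf{X}$.
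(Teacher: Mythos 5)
Your proof is correct and follows essentially the same route as the paper: part (i) by specialising Theorem~\ref{t:mainres} to the $\underline{x}$-open case, part (ii) via Theorem~\ref{t:peter} applied to a positive eigenvector in $\mbf{X}$, part (iii) via Theorem~\ref{t:case2}, and part (iv) via Lemma~\ref{l:geometric}. Your explicit remarks on positivity (dropping the $+$ superscripts) and on reading $G_{A^{(s)},\lambda}$ as $G_{A,\lambda}^{(s)}$ are accurate clarifications of points the paper leaves implicit.
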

\begin{proof} Assume $\lambda=1$.
(i): Follows from Theorem~\ref{t:mainres} part (i), where we take into account that 
$c(\underline{x})$ is empty and $\mbf{X}^{(i}=\mbf{X}$ for each $i$.\\

(ii): Note that each vector in $V(A,1)\cap\mbf{X}$ is positive. By Theorem~\ref{t:peter},
if $\crit(A)$ does not consist of disjoint cycles or $N_c\neq N$ then each vector $x\in V(A,1)$
belongs to $\spann(A^{(i)})$ for some $i\in N$. Hence
$\spann(A^{(i)})\cap V(A,1)\cap\mbf{X}\neq\emptyset$ for some $i$, a contradiction.\\

(iii): By Theorem~\ref{t:case2} we have 
$$(\bigcup_{i\in N} \spann(A^{(i)})\cap V(A,1)=
\cup_{s=1}^k\spann(G_{A^{(s)},1}).$$ 
Hence~\eqref{e:openclosed1} and~\eqref{e:openclosed2} are equivalent.\\

(iv): By applying Lemma~\ref{l:geometric} to~\eqref{e:openclosed2}, we obtain that~\eqref{e:openclosed2} is equivalent to
\begin{equation}
\label{e:openclosed3}
P_{G_{A^{(s)},1}} \overline{x}\notin\mbf{X}\quad\forall s=1,\ldots,k,
\end{equation}
and that is the same as~\eqref{e:openclosed4}.
\end{proof}



\begin{thebibliography}{99}


\bibitem{All+} X. Allamigeon, \'{E}. Goubault and S.~Gaubert. 
Computing the Vertices of Tropical Polyhedra using Directed Hypergraphs. 
{\em Discrete and Computational Geometry}, 49:2 (2013), 247–-279. 



\bibitem{BCOQ}
F.L.~Baccelli, G.~Cohen, G.J.~Olsder and J.P.~Quadrat. {\em Synchronization and Linearity.}
Wiley and Sons, 1992. Available online: 
\url{https://www.rocq.inria.fr/metalau/cohen/SED/book-online.html}

\bibitem{But}
P. Butkovi\v{c}.
\newblock  {\em Max-linear Systems: Theory and Algorithms}, Springer Monographs in Mathematics, Springer-Verlag 2010. 


\bibitem{ButSIS}
P.~Butkovi\v{c}. Simple image set of $(\max,+)$ linear mappings.
\newblock{\em Discrete Appl. Math.} vol. 105 (2000), 73 -- 86.

\bibitem{BH} P.~Butkovi\v{c} and G.~Heged\"{u}s. An elimination method for finding all solutions of the system of linear equations over an extremal algebra
{\it Ekonomicko-Matematick\'y Obzor}, 20 (1984), pp. 203–214

\bibitem{BSS}
P.~Butkovi\v{c}, H.~Schneider and S.~Sergeev.  Generators, extremals and
bases of max cones. {\em Linear Algebra Appl.} {\bf 421}, 2007, 394-406.

\bibitem{BSSws}
P.~Butkovi\v{c}, H.~Schneider and S.~Sergeev. Recognizing weakly stable matrices.
{\em SIAM J. Control Optim.} {\bf 50} (5), 2012, 3029-3051.

\bibitem{CGQS}
G.~Cohen, S.~Gaubert, J.P.~Quadrat and I.~Singer. 
Max-plus convex sets and functions. 
In: G.L. Litvinov and V.P. Maslov (eds.), {\em Idempotent Mathematics and Mathematical Physics},
Cont. Math. {\bf 377} , AMS, 2005, pp. 105--129,

\bibitem{CG:79} 
R. A. Cuninghame-Green.
\newblock {\em Minimax Algebra.}
 \newblock Lecture Notes in Economics and Mathematical Systems {\bf 166},
Springer, Berlin, 1979.

\bibitem{Gau:92}
S.~Gaubert. 
\newblock {\em Th\'{e}orie des syst\`{e}mes lin\'{e}aires dans les dio\"{\i}des.}
\newblock PhD Thesis, L'{\'E}cole des Mines de Paris, 1992. Available online: 
\url{http://www.cmap.polytechnique.fr/~gaubert/PAPERS/ALL.pdf}

\bibitem{CG:95} 
R. A.~Cuninghame-Green.
\newblock {\em Minimax algebra and applications.}
\newblock Advances in Imaging and Electron Physics {\bf 90},
 (1995)   1--121.

\bibitem{gazi08} 
M.~Gavalec, K.~Zimmermann.
Classification of solutions to systems of two-sided equations with
interval coefficients. Inter. J. of Pure and Applied Math. {\bf 45}
(2008), 533--542.



\bibitem{HOW}
B.~Heidergott, G.-J. Olsder, and J.~van~der Woude.
\newblock {\em Max-plus at Work}.
\newblock Princeton Univ. Press, 2005.






\bibitem{mys05} 
	H.~My\v{s}kov\'{a}.
	\newblock Interval systems of max-separable linear equations.
	\newblock {\itshape Linear Algebra and Its Applications} 403 (2005)   263--272.
	
	
	\bibitem{mys06} 
	H.~My\v{s}kov\'{a}.
	\newblock Control solvability of interval systems of max-separable linear equations.
	\newblock {\itshape Linear Algebra and Its Applications}  416 (2006) 215--223.

\if{
\bibitem{mp}
H. My\v skov\'a, J. Plavka.
\newblock 
 The robustness of interval matrices in max-plus algebra. 
\newblock {\itshape Linear Algebra and Its Applications} 445 (2014) 85--102.
}\fi

\bibitem{p} 
J. Plavka.
\newblock The weak robustness of interval matrices in max-plus algebra
\newblock {\itshape Discrete Appl. Math.}  173 (2014) 92-101.

\bibitem{P1}
	J. Plavka.
	\newblock The weak robustness of interval matrices in max–plus algebra. 
	\newblock {\itshape Discrete Applied Mathematics} 173 (2014) 92--101. 
	

\bibitem{ro} J.~Rohn.
\newblock Systems of Linear Interval Equations.
\newblock {\itshape Linear Algebra and Its Applications} 126 (1989)  39--78.

\if{
\bibitem{Ser-09}
S. Sergeev.
Max algebraic powers of irreducible matrices in the
periodic regime: An application of cyclic classes, {\itshape Linear Algebra and its Applications} 431 (2009) 1325–1339.
}\fi



\bibitem{Ser-11}
S. Sergeev. Max-algebraic cones of nonnegative irreducible matrices, {\itshape Linear Algebra and its Applications} 435 (2011), 1736-1757. 


\bibitem{SSB}
S. Sergeev, H. Schneider and P. Butkovi\v{c}. On visualization scaling,
subeigenvectors and Kleene stars in max algebra. {\em Linear
Algebra and Its Applications}~431 (2009) 2395--2406.


\bibitem{Vor} 
N.N. Vorobyev. Extremal algebra of positive matrices (in Russian). 
{\em Elektronische Informationsverarbeitung und Kybernetik}, 3 (1967) 39--71.

\bibitem{Zim}
K. Zimmermann. {\em Extrem\'aln\' i algebra} (in  Czech), Ekon.
\'ustav \v{C}SAV Praha, 1976.



\end{thebibliography}
\end{document}